\documentclass[12pt]{amsart}
\usepackage{hyperref}
 \usepackage[margin=1in]{geometry} 
\usepackage{amsmath,amsthm,amssymb,amsfonts, esint}
\usepackage{comment}
\usepackage{mathtools}
\usepackage{dsfont}

\newcommand{\R}{\mathbb{R}}
\newtheorem{theorem}[]{Theorem}
\DeclareFontFamily{OMX}{yhex}{}
\DeclareFontShape{OMX}{yhex}{m}{n}{<->yhcmex10}{}
\DeclareSymbolFont{yhlargesymbols}{OMX}{yhex}{m}{n}
\DeclareMathAccent{\wideparen}{\mathord}{yhlargesymbols}{"F3}
\newtheorem{proposition}{Proposition}[section]
\newtheorem{lemma}[proposition]{Lemma}
\newtheorem{conjecture}{Conjecture}[]

\newtheorem{corollary}[proposition]{Corollary}

\theoremstyle{definition}
\newtheorem{definition}{Definition}[section]
\theoremstyle{remark}
\newtheorem{remark}{Remark}[]
\newtheorem{example}{Example}[]

\usepackage{tikz}
\usetikzlibrary{calc}
\usetikzlibrary{calc, patterns, intersections, arrows.meta, shadings}

\usepackage{float}

\newcommand\numeq[1]
  {\stackrel{\scriptscriptstyle(\mkern-1.5mu#1\mkern-1.5mu)}{=}}
  \newcommand\numstrict[1]
  {\stackrel{\scriptscriptstyle(\mkern-1.5mu#1\mkern-1.5mu)}{<}}
  \newcommand\numineq[1]
  {\stackrel{\scriptscriptstyle(\mkern-1.5mu#1\mkern-1.5mu)}{\leq}}

\newcommand{\sint}{\int_{\mathbb{S}^{n-1}}}

\definecolor{fuchsia}{RGB}{240,50,180}
\definecolor{spring}{RGB}{180,220,150}
\definecolor{lavender}{RGB}{200,140,220}
\definecolor{raddish}{RGB}{250,120,120}
\newcommand\polar[2]{({#1*cos(#2)},{#1*sin(#2)})}

\usepackage{caption}
\usepackage{subcaption}

\definecolor{backshade}{RGB}{150, 210, 225}
\definecolor{frontshade}{RGB}{150, 220, 240}
\definecolor{transparentprple}{RGB}{190, 172, 223}
\definecolor{transparentgreen}{RGB}{190, 216, 192}

    \usetikzlibrary{positioning}

\newcommand\pgfmathsinandcos[3]{%
  \pgfmathsetmacro#1{sin(#3)}%
  \pgfmathsetmacro#2{cos(#3)}%
}
\newcommand\LongitudePlane[3][current plane]{%
  \pgfmathsinandcos\sinEl\cosEl{#2} 
  \pgfmathsinandcos\sint\cost{#3} 
  \tikzset{#1/.style={cm={\cost,\sint*\sinEl,0,\cosEl,(0,0)}}}
}

\newcommand\LatitudePlane[3][current plane]{%
  \pgfmathsinandcos\sinEl\cosEl{#2} 
  \pgfmathsinandcos\sint\cost{#3} 
  \pgfmathsetmacro\yshift{\cosEl*\sint}
  \tikzset{#1/.style={cm={\cost,0,0,\cost*\sinEl,(0,\yshift)}}} %
}

\newcommand\DrawLongitudeCirclearcNorthHemisphereONE[2][1]{
  \LongitudePlane{\angEl}{#2}
  \tikzset{current plane/.prefix style={scale=#1}}
  \pgfmathsetmacro\angVis{atan(sin(#2)*cos(\angEl)/sin(\angEl))} %
  \draw[current plane,very thick,transparentprple] (90:1) arc (90:180-19:1);
}
\newcommand\DrawLongitudeCirclearcSouthHemisphereONE[2][1]{
  \LongitudePlane{\angEl}{#2}
  \tikzset{current plane/.prefix style={scale=#1}}
  \pgfmathsetmacro\angVis{atan(sin(#2)*cos(\angEl)/sin(\angEl))} %
  \draw[current plane,very thick,transparentgreen] (-19:1) arc (-19:-90:1);
}
\newcommand\DrawLongitudeCirclearcSouthHemispherebacksideONE[2][1]{
  \LongitudePlane{\angEl}{#2}
  \tikzset{current plane/.prefix style={scale=#1}}
  \pgfmathsetmacro\angVis{atan(sin(#2)*cos(\angEl)/sin(\angEl))} %
  \draw[current plane,very thick,densely dashed,transparentgreen] (-19:1) arc (-19:-50:1);
}

\newcommand\DrawLatitudeCircle[2][1]{
  \LatitudePlane{\angEl}{#2}
  \tikzset{current plane/.prefix style={scale=#1}}
  \pgfmathsetmacro\sinVis{sin(#2)/cos(#2)*sin(\angEl)/cos(\angEl)}
  \pgfmathsetmacro\angVis{asin(min(1,max(\sinVis,-1)))}
  \draw[current plane,thin,black] (\angVis:1) arc (\angVis:-\angVis-180:1);
  \draw[current plane,thin,dashed] (180-\angVis:1) arc (180-\angVis:\angVis:1);
}

\newcommand\DrawLatitudeCirclethick[2][1]{
  \LatitudePlane{\angEl}{#2}
  \tikzset{current plane/.prefix style={scale=#1}}
  \pgfmathsetmacro\sinVis{sin(#2)/cos(#2)*sin(\angEl)/cos(\angEl)}
  \pgfmathsetmacro\angVis{asin(min(1,max(\sinVis,-1)))}
  \draw[current plane,thick,purple] (\angVis:1) arc (\angVis:-\angVis-180:1);
  \draw[current plane,thick,dashed, purple] (180-\angVis:1) arc (180-\angVis:\angVis:1);
}
\newcommand\DrawLatitudeCirclethickGREEN[2][1]{
  \LatitudePlane{\angEl}{#2}
  \tikzset{current plane/.prefix style={scale=#1}}
  \pgfmathsetmacro\sinVis{sin(#2)/cos(#2)*sin(\angEl)/cos(\angEl)}
  \pgfmathsetmacro\angVis{asin(min(1,max(\sinVis,-1)))}
  \draw[current plane,thick,green] (\angVis:1) arc (\angVis:-\angVis-180:1);
  \draw[current plane,thick,dashed, green] (180-\angVis:1) arc (180-\angVis:\angVis:1);
}

\tikzset{%
  >=latex,
  inner sep=0pt,%
  outer sep=2pt,%
  mark coordinate/.style={inner sep=0pt,outer sep=0pt,minimum size=3pt,
    fill=black,circle}%
}

\tikzset{%
  >=latex,
  inner sep=0pt,%
  outer sep=2pt,%
  mark purple coordinate/.style={inner sep=0pt,outer sep=0pt,minimum size=3pt,
    fill=purple,circle}%
}

\usepackage{pgfplots}
\usetikzlibrary{intersections, pgfplots.fillbetween}

\begin{document}

\title[Spectral Carleson]{Characterizing higher-dimensional H\"older domains via spherical geometric functions}
\author{Emily Casey, Max Engelstein and Tatiana Toro}
\thanks{E.C. thanks Bobby Wilson for helpful suggestions. All the authors thank Stefan Steinerberger for helpful conversations regarding potential theory and quantitative versions of Faber-Krahn in $\mathbb S^2$. E.C. was partially supported by NSF DMS 2602489,the AMS-Simons Travel Grant, and by the Simons Initiative on the Geometry of Flows  (Grant Award ID BD-Targeted-00017375-ME).  M.E. was partially supported by NSF DMS CAREER 2143719, the George W. Taylor Career Development fund and by the Simons Initiative on the Geometry of Flows  (Grant Award ID BD-Targeted-00017375-ME).  T.T. was partially supported by NSF DMS 2424139 and NSF DMS 1928930.}

    \begin{abstract}
  In this paper we characterize the regularity of a domain in terms of the behavior of a certain spherical function. More precisely, given two domains we show that a certain spherical spectral function (first defined in \cite{FTV23}, see also \cite{AKN21}) has H\"older decay if and only if the domains are complementary and separated by a $C^{1,\beta}$-manifold. This extends the analysis in \cite{Ca24} to dimensions $\geq 3$ and in \cite{FTV23, CTV25} to regularity above the continuous threshold. 

  Along the way, we introduce a spherical analogue of the Jones $\beta$-numbers and show that the decay of these numbers implies geometric regularity of the domain on which they are defined. We believe this result is of independent interest and partially addresses a question of Fleschler-Tolsa-Villa \cite[Question 1]{FTV23}. 
    \end{abstract}

\maketitle

\section{Introduction}\label{s:intro}
In this article we prove a higher regularity version of Carleson's $\varepsilon^2$-conjecture, continuing the study of the following question: given a domain $\Omega\subset \mathbb{R}^n$, is understanding the behavior of $\partial \Omega\cap \partial B(x,r)$, i.e. the behavior of the boundary on spheres, sufficient to determine the regularity of $\partial \Omega$? A question of this nature first arose in the 1980s in \cite{Bi87} following the work of Bishop, Carleson, Garnett, and Jones on a two-phase free boundary problem for harmonic measure, see \cite{BCGJ}. In particular, in \cite{Bi87}, the function below captures the behavior of the boundary of a domain in $\R^2$.

    Let $\Omega^+\subset \mathbb{R}^2$ be such that its boundary, $\partial \Omega^+$, is a Jordan curve, and denote by $\Omega^-:=\mathbb{R}^2\setminus \overline{\Omega}$. Then for $x\in \partial \Omega^+$ and $0<r<\mathrm{diam}(\Omega^+)$, define
    
\begin{equation}\label{e: epsilon function}
    \varepsilon(x,r):=\frac{1}{r}\max\left\{|\pi r-\mathcal{H}^1(I^+(x,r))|,|\pi r-\mathcal{H}^1(I^-(x,r))| \right\},
\end{equation}
where $I^{j}(x,r)$ denotes the longest open arcs of $\partial B(x,r)$ contained in $\Omega^{j}$, for $j=+,-$.

This function is known as the Carleson $\varepsilon$-function (or, the $\varepsilon$-function). 
The $\varepsilon$-function appeared naturally in this context due to its connection with harmonic measure via Beurling's distortion theorem.\footnote{See \cite{Bi87} for more details.} In \cite{Bi87}, the authors exploited this connection to prove that if $x\in \partial \Omega^+$ is a tangent point, then $ \int_0^1\varepsilon(x,r)^2\frac{dr}{r}<\infty$, up to a set of $\mathcal{H}^1$-measure zero. It was conjectured that the converse should also be true. This became known as \textit{Carleson's $\varepsilon^2$-Conjecture}. 

\begin{conjecture}[Carleson's $\varepsilon^2$-conjecture] \label{c: carleson's conjecture} Up to a set of $\mathcal{H}^1$-measure zero, $\partial \Omega$ has a tangent at $x\in \partial \Omega$ if and only if 
    \begin{equation}\label{e: Dini epsilon}
        \int_0^1 \varepsilon(x,r)^2\frac{dr}{r}<\infty.
    \end{equation} 
\end{conjecture}
This conjecture, now a theorem, was open for more than thirty years and was proved in the seminal work of Jaye, Tolsa, and Villa in \cite{JTV21}.\footnote{For a precise definition of tangent point see \cite{BCGJ, GM05, FTV23}.} Even more recently in \cite{FTV23,FTV24}, Fleschler, Tolsa, and Villa proved an analogue of Conjecture \ref{c: carleson's conjecture} in higher dimensions. They proved an almost everywhere characterization of tangent points of higher dimensional domains in terms of a spherical spectral function, which we will denote by $a(x,r)$. We delay the discussion of the function $a(x,r)$ for the moment, but we would like to point out that this spectral function maintains a connection with harmonic measure via the Alt-Caffarelli-Friedman monotonicity formula.

These spherical functions have garnered a lot of interest, and for several reasons. First, these functions are intimately connected to harmonic measure and the growth rate of harmonic functions via the Beurling distortion theorem and the Alt-Caffarelli-Friedman monotonicity formula. Second, the functions {\it a priori} have a lack of cohesiveness across scales, thus requiring delicate analysis (compare this with the $\beta$-numbers, see \eqref{e:jonesbeta}). 

The lack of cohesiveness across scales is a major difficulty in the Carleson conjecture and in related problems, including the one studied in this article (see Theorem \ref{thrm:main theorem}). In order to gain some intuition, let us first consider the case of $n=2$. At scales where the $\varepsilon$-function is small, $\partial \Omega$ cuts circles, in some sense, roughly in half. So at each scale, $\partial \Omega\cap \partial B(x,r)$ is $L^{\infty}$-close to a plane. But, this information is certainly not enough. It is possible for $\varepsilon(x,r)=0$ for all $r\leq 1$, (meaning that at each scale $\partial \Omega\cap \partial B(x,r)$ lies on a line), but 
for the good approximating line at each scale to rotate. See Figure \ref{f: spiral}. 
Thus, despite its simple geometric definition \eqref{e: epsilon function}, even strong control on the $\varepsilon$-function does not give information about the geometric regularity of $\partial \Omega$ apriori. 

\begin{figure}[H]
\centering
\begin{tikzpicture}[scale=1.3]
    \def\angl{180}
		
    
	\draw[domain=0:2350, samples=200, smooth, variable=\x, ultra thick]  plot ({.002*exp(.003*\x)*cos(\x)}, {.002*exp(.003*\x)*sin(\x)}); 
    \draw[domain=0:2350, samples=200, smooth, variable=\x, ultra thick, magenta]  plot ({.002*exp(.003*\x)*cos(\x+\angl)}, {.002*exp(.003*\x)*sin(\x+\angl)}); 
   
	\coordinate (X) at (0,0); 
	\node (Y) at  \polar{2.12}{0} {}; 
    \node (Z) at (1,0) {}; 

 \foreach \x in {0,300,2150}{
 \draw[blue] (0,0) circle ({.002*exp(.003*\x)}); 
 \filldraw[black] ({.002*exp(.003*\x)*cos(\x+\angl)}, {.002*exp(.003*\x)*sin(\x+\angl)}) circle (1pt); 
 \filldraw[black] ({.002*exp(.003*\x)*cos(\x)}, {.002*exp(.003*\x)*sin(\x)}) circle (1pt); 
 }
  \foreach \x in {0,300,1700}{
 \draw[blue] (0,0) circle ({.002*exp(.003*\x)}); 
 \filldraw[black] ({.002*exp(.003*\x)*cos(\x+\angl)}, {.002*exp(.003*\x)*sin(\x+\angl)}) circle (1pt); 
 \filldraw[black] ({.002*exp(.003*\x)*cos(\x)}, {.002*exp(.003*\x)*sin(\x)}) circle (1pt); 
 }
	\end{tikzpicture}
\centering
    \caption{$\varepsilon(0,r)=0$ for all small $r$ but $0$ is not a tangent point of $\partial \Omega$}
\label{f: spiral}
\end{figure}

We now return to the discussion of spherical spectral functions. In this article we consider the $\lambda(x,r)$-function, which measures how close the first Dirichlet eigenvalues of both $\Omega^+\cap \partial B(x,r)$ and $\Omega^-\cap \partial B(x,r)$ are to $(n-1)/r^2$, i.e. the first eigenvalue of a hemisphere (see Definition \ref{def: lambda function}), where $\Omega^+$ and $\Omega^-$ are complementary domains.  The $a(x,r)$-function introduced in \cite{FTV23} is slightly different than $\lambda(x,r)$ but when either is small they are comparable (see Remark \ref{r:alphaandlambda}). These spectral functions may not seem like the natural analogue of $\varepsilon(x,r)$ in higher dimensions; however, when $n = 2$ it follows that $\varepsilon(x,r)^2 \approx \lambda(x,r)$ when both are small (see \cite[p.9]{FTV24}). In higher dimensions it follows from \cite[Theorem C]{FTV24} (see also \cite{AKN21}, \cite{BDV}) that the the spectral functions control the natural measure-theoretic generalization of $\varepsilon(x,r)$ (called $\varepsilon_n(x,r)$ see Definition \ref{d:defepsilonn}) and easy examples show the opposite is not true (see Example \ref{ex:lambdavsepsilon} below).

Inspired by \cite{JTV21, FTV23, FTV24}, there has been a surge of activity aimed at understanding the extent to which these spherical functions determine the regularity of the domain on which they are defined. See for instance \cite{Ca24, CTV25, AKN22}, as well as Section \ref{subsec: related work} below. In this article, we contribute to this line of research by proving the first higher regularity version of Carleson's $\varepsilon^2$-Conjecture in higher dimensions. Our main result is the following.

 \begin{theorem}\label{thrm:main theorem}
      Let $n\geq 2$, $\Omega_1,\Omega_2\subseteq \mathbb{R}^n$ be open and disjoint. Let $\Gamma:= \partial \Omega_1\cup \partial \Omega_2$. Fix some $\alpha\in (0,1)$ and $C>0$. If there exists $r_0>0$ such that 
    \begin{equation}\label{eq: lambda infinity}
        \lambda(x,r)< Cr^{\alpha} \qquad \text{for all} \qquad x\in \Gamma, \, r\leq r_0, 
    \end{equation}
   then $\partial \Omega_1 = \partial \Omega_2$ is a $C^{1, \beta}$-manifold, where $\beta$ depends on $\alpha$.
 \end{theorem}

We remark that in the case of $n=2$, Theorem \ref{thrm:main theorem} was proved by the first named author in \cite{Ca24}. Moreover, it is straightforward to show that if $\partial \Omega_1 = \partial \Omega_2$ is $C^{1,\alpha}$ then $\lambda(x,r) < Cr^{\alpha'}$ for any $\alpha' \leq  \alpha$, which means that Theorem \ref{thrm:main theorem} gives a characterization of $C^{1,\alpha}$-domains in $\mathbb{R}^n$. We also point out that no assumptions are made on $\Omega_1, \Omega_2$, or their boundaries. Additionally, the argument is local, in the sense that if the decay in \eqref{eq: lambda infinity} holds for all $x\in B_{2\rho}(x_0)$ then in $B_{\rho}(x_0)$ we have $\partial \Omega_1 = \partial \Omega_2$ is a $C^{1,\beta}$-manifold. 

We further emphasize that the strategy introduced in this article is different than that of the other higher-dimensional papers that consider the same or related spherical functions \cite{FTV23,CTV25}. One difference is that our proof is quantitative, whereas the result in \cite{FTV23} is qualitative and relies on a contradiction compactness argument, and in \cite{CTV25} the authors make a technical assumption which reduces the question of regularity to proving a topological condition. A deeper distinction between the works is how they deal with the issue of ``coherence"; showing that the best approximating hyperplane for $\Gamma \cap \partial B(x,r)$ is not so different from the best approximating hyperplane for $\partial B(y,s)\cap \Gamma$ as long as $s\approx r$ and $|y-x| \leq r$. As stated above, this is a fundamental difficulty in Carleson-type problems. In \cite{FTV23, JTV21} this coherence is achieved via a regularization and compactness argument, which shows that $\Gamma$ is well approximated by a set with smooth structure. In this work, we establish coherence via purely geometric considerations, and prove ``by hand" that the angle between these best approximating planes is controlled by the $\lambda$-function, see e.g. \eqref{e: final angle}. Instrumental to that proof is the introduction of a novel spherical function, $b\beta^s_\Gamma$, which captures the bilateral $L^{\infty}$-closeness of the boundary to a plane \emph{on spheres}. We introduce this function and give a more comprehensive sketch of the argument in Section \ref{ss: strategy}.

 As far as the authors know, this is the first use of a bilateral measure of flatness in spheres (though the $\varepsilon_s$ introduced in \cite{FTV23} can be interpreted as integral analogues). In this article, we show power decay on this spherical function measure of flatness, $b\beta^s$, implies power decay on the classical $\beta$-numbers (see \eqref{e:jonesbeta} and Section \ref{ss: strategy} for more details). Thus, it is natural to ask if one can prove analogous characterizations of regularity with the $b\beta^s$-numbers as with the classical $b\beta_E$-numbers. It is important to note that these spherical measures of flatness give geometric information for a more general class of sets, not just the ``two-sided boundaries" considered in this article. 

\begin{remark}
   Our argument does not give the sharp relationship between $\alpha$ and $\beta$, and so we did not try to keep track of the dependence (though our argument does give an effective bound). We leave this as an open question. However, we have been told that in forthcoming work of Wang and Tolsa, the authors obtain the optimal exponent in the case of $n=2$. 
\end{remark}

\subsection{Related literature}\label{subsec: related work}

Theorem \ref{thrm:main theorem} can be seen as part of a long tradition of theorems concluding regularity from the behavior of localized geometric estimates at many points and scales. Perhaps the first example of this is Reifenberg's topological disc theorem \cite{Re60}. In that work, Reifenberg introduced the following quantity,

\begin{equation}\label{e:jonesbeta}
    b\beta_E(x,r) := \inf_{L\text{ an affine $(d-1)$-space}} \sup_{y\in B(x,r)\cap E} \frac{\mathrm{dist}(y, L)}{r} + \sup_{z\in B(x,r)\cap L} \frac{\mathrm{dist}(z, E)}{r},
\end{equation}
which is a coarse local measurement of how close $E$ is to being a plane (see also \cite{KT97}). It was shown in \cite{Re60} that for each $\alpha \in (0,1)$ there exists an $\varepsilon > 0$ such that if $b\beta_E(x,r) \leq \varepsilon$ for all $x\in E$ and $r < r_0$ then $E$ is a bi-$C^{0,\alpha}$-image of a $(d-1)$-plane. In the intervening 60 years there have been numerous generalizations of this deep insight, see the introduction of \cite{Naber} for a survey of some of these different directions. Moreover, in \cite{J90}, Jones introduced a ``one-sided'' version of \eqref{e:jonesbeta}, $\beta_E$, and showed that the summability of this quantity characterizes subsets of rectifiable curves in the plane. This was later extended to higher dimensions in \cite{O92}. In \cite{BJ94}, Bishop and Jones proved a version of Conjecture \ref{c: carleson's conjecture} in terms of the $\beta_E$, which was later extended to higher dimensions in \cite{HV21}. Again, the literature generalizing Jones' influential idea is too vast to do justice, so we point the reader to the survey article \cite{Badger}. However, we would like to draw particular attention to the work of David, Kenig, and the third named author \cite{DKT}, which we state in a form that is convenient for us, but we note that the actual result is more general:
\begin{theorem}\label{t:DKT}{\cite[Proposition 9.1]{DKT}}
    Suppose $E\subset\mathbb{R}^n$. Fix $\alpha\in (0,1)$ and $C>0$. If there exists an $r_0>0$ such that $b\beta_E(x,r) < Cr^\alpha$ for all $x\in E$ and $r < r_0$, then $E$ is a $C^{1,\alpha}$-manifold. 
\end{theorem}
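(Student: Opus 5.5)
The plan is the classical Reifenberg-type argument upgraded by summability of the flatness across dyadic scales. We assume $E$ is closed, or replace it by its closure; the hypothesis passes to $\overline{E}$ with the same constant up to a factor. We work at scales $r_k = 2^{-k}r_0$.

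\textbf{Step 1: coherence of the approximating planes.} For each $x\in E$ and $r<r_0$, choose an affine $(n-1)$-plane $L_{x,r}$ through (or within $Cr^{1+\alpha}$ of) $x$ almost realizing $b\beta_E(x,r)$. Since the quantity is \emph{bilateral}, the sets $E\cap B(x,r)$ and $L_{x,r}\cap B(x,r)$ are within Hausdorff distance $Cr^{1+\alpha}$. Hence for $y\in E$ with $|x-y|\le r$ and $s\in[r/2,r]$, both $L_{x,r}$ and $L_{y,s}$ are $Cr^{1+\alpha}$-close to $E$ in $B(y,s/2)$. Two hyperplanes that are close on a ball of radius comparable to $r$ have normals at angle $\angle(L_{x,r},L_{y,s})\lesssim r^\alpha$. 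The bilaterality is essential here: it guarantees that $E$ fills out a full $(n-1)$-dimensional piece of each plane, so the closeness is not degenerate.

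\textbf{Step 2: tangent planes and their Hölder continuity.} Summing the angle bounds geometrically over dyadic scales gives
\[
\angle(L_{x,r_k},L_{x,r_{k+1}})\lesssim r_k^\alpha .
\]
The unit normals $\nu(x,r_k)$ are therefore Cauchy and converge to $\nu(x)$ with $|\nu(x,r)-\nu(x)|\lesssim r^\alpha$. For $x,y\in E$ with $|x-y|=r$ small, we combine three estimates:
\[
|\nu(x)-\nu(y)|\le |\nu(x)-\nu(x,r)|+|\nu(x,r)-\nu(y,r)|+|\nu(y,r)-\nu(y)|\lesssim |x-y|^\alpha ,
\]
where the middle term is controlled by Step 1. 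So the limiting tangent plane $T_x$ varies $C^{0,\alpha}$ in $x$. Moreover, $E\cap B(x,r)$ lies in a $Cr^{1+\alpha}$-neighbourhood of $x+T_x$, because the plane $L_{x,r}$ is within angle $r^\alpha$ of $T_x$ and passes within $r^{1+\alpha}$ of $x$.

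\textbf{Step 3: graph representation.} Fix $x_0\in E$, with coordinates in which $T_{x_0}=\{x_n=0\}$, and work in a cylinder of radius $\rho\ll r_0$.
\begin{itemize}
\item \emph{Injectivity of the projection.} Let $\pi$ be the orthogonal projection onto $T_{x_0}$. If $y\ne z\in E$ had the same projection, then $z-y$ would be vertical. But Step 2 applied at $y$ forces $|(z-y)\cdot\nu(y)|\lesssim|z-y|^{1+\alpha}$, while $\nu(y)$ is nearly vertical. This is a contradiction for $\rho$ small.
\item \emph{Surjectivity of the projection.} This is where the two-sided half of $b\beta$ is used. Iterating the condition $\sup_{z\in L\cap B}\mathrm{dist}(z,E)\le Cr^{1+\alpha}$ down the scales, every point of $T_{x_0}\cap B(x_0,\rho/2)$ is a limit of points of $\pi(E)$. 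Since $E$ is closed, $\pi(E)$ covers this disc.
\end{itemize}
Thus $E$ is locally the graph of a function $f$. The estimate $|(z-y)\cdot\nu(y)|\lesssim|z-y|^{1+\alpha}$ shows that $f$ is differentiable, with gradient determined by $\nu$. Since $\nu$ is $C^{0,\alpha}$, we get $f\in C^{1,\alpha}$.

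\textbf{Main obstacle.} I expect the main obstacle to be the surjectivity step, that is, excluding holes in $E$. The clean way is a Reifenberg-style construction: build approximating maps $\sigma_k$ from $T_{x_0}$ into the $r_k$-neighbourhood of $E$ using partitions of unity subordinate to $r_k$-balls together with the planes $L_{y,r_k}$. One then checks that $\|\sigma_{k+1}-\sigma_k\|_\infty\lesssim r_k^{1+\alpha}$ and $\|D\sigma_{k+1}-D\sigma_k\|_\infty\lesssim r_k^\alpha$. The maps therefore converge in $C^{1}$ to a $C^{1,\alpha}$ parametrization whose image is contained in $E$ and contains $E$ near $x_0$. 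Under the power-decay hypothesis these estimates should be routine, given the angle control from Step 1.
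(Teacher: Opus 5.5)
The paper gives no proof of this statement. It is quoted from \cite[Proposition 9.1]{DKT} and used as a black box, applied to the closed set $\Gamma$. Your Steps 1--3 are a correct, self-contained direct proof.

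Bilateral flatness gives $\angle(L_{x,2r},L_{y,s})\lesssim r^{\alpha}$ for $y\in E$, $|x-y|\le r$, $s\in[r/2,r]$. Use $L_{x,2r}$ rather than $L_{x,r}$ here, since $B(y,s/2)\not\subset B(x,r)$ in general. Summing dyadically gives tangent planes with $|\nu(x)-\nu(y)|\lesssim|x-y|^{\alpha}$ and $\mathrm{dist}(z,x+T_x)\lesssim|z-x|^{1+\alpha}$ for $z\in E$. That single estimate yields three things:
injectivity of the projection,
a small Lipschitz bound for the graph,
and differentiability of $f$ with $\nabla f$ expressed through $\nu$.
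Hence $f\in C^{1,\alpha}$ with the same exponent.

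You do not need the Reifenberg construction you flag as the main obstacle. The iteration you allude to for surjectivity closes directly. Take $p\in(x_0+T_{x_0})\cap B(x_0,\rho/2)$ and $e_k\in E$ with $d_k=|\pi(e_k)-p|>0$. Apply the two-sided bound at $(e_k,4d_k)$ to the point of $e_k+T_{e_k}$ lying over $p$. This produces $e_{k+1}\in E$ with $d_{k+1}\le Cd_k^{1+\alpha}$ and $|e_{k+1}-e_k|\le 3d_k$. Starting from $d_0\lesssim\rho^{1+\alpha}$, the $e_k$ stay in $B(x_0,\rho)$. They form a Cauchy sequence whose limit lies in $E$ (closed) over $p$.

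The parametrization scheme in your last paragraph is the robust tool when flatness is merely small, or tends to zero without a rate. In that regime pointwise tangent planes need not exist, and the $|z-y|^{1+\alpha}$ estimate driving your injectivity and differentiability steps is unavailable. Under power decay your graph argument is shorter.

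One caveat concerns the statement rather than your argument. Passing to $\overline{E}$ proves only that $\overline{E}$ is a $C^{1,\alpha}$-manifold, and that is the correct conclusion. A hyperplane minus a countable dense set has $b\beta_E\equiv 0$ but is not a manifold. So closedness is implicit in the cited result, and it holds for $\Gamma=\partial\Omega_1\cup\partial\Omega_2$ in the paper's application.
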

Roughly, Theorem \ref{t:DKT} says that if locally a set $E$ is quantitatively, bilaterally close to a plane in each ball, then it is locally parameterized by the graph of a $C^{1,\alpha}$-function. Theorem \ref{thrm:main theorem} says instead that if $\partial \Omega$ is close to a plane, in some spectral sense, \emph{in each sphere}, then it is locally parameterized by the graph of a $C^{1,\alpha}$-function. Thus, our Theorem \ref{thrm:main theorem} can be seen as a spherical, spectral, analogue of Theorem \ref{t:DKT}. 

After the resolution of Conjecture \ref{c: carleson's conjecture} in \cite{JTV21}, there has been a lot of activity characterizing domains of a certain degree of regularity in terms of the behavior of spherical functions. As mentioned above, in \cite{FTV23,FTV24}, Fleschler, Tolsa, and Villa proved an analogue of Conjecture \ref{c: carleson's conjecture} in higher dimensions. One of the difficulties in extending Carleson's conjecture to higher dimensions is in defining the ``correct'' higher dimensional analogue of \eqref{e: epsilon function}. For instance, one can imagine two connected complementary domains, $V_1,V_2\subset \mathbb{S}^{n-1}$, with $\mathcal{H}^{n-1}(V_1)=\mathcal{H}^{n-1}(V_2)=\frac{1}{2}\mathcal{H}^{n-1}(\mathbb{S}^{n-1})$, but where $V_1$ and $V_2$ do not agree with complementary hemispheres, i.e. 
$|V_1\Delta H|+|V_2\Delta H^c|$ is large for any hemisphere $H$. 

In \cite{FTV23,FTV24} the authors defined two candidates for a higher dimensional version of the $\varepsilon$-function. The first function is the $\varepsilon_n$-function\footnote{See Definition \ref{d:defepsilonn}.}. The $\varepsilon_n$-function, like its two-dimensional counterpart, uses $\mathcal{H}^{n-1}$-measure to capture how close $\Omega^+\cap \partial B(x,r)$ and $\Omega^-\cap \partial B(x,r)$ are to being complementary hemispheres. Although the $\varepsilon_n$-function may initially seem like a more natural higher-dimensional candidate than the spectral function, $a(x,r)$ discussed above, it is not clear that an analogue of Conjecture \ref{c: carleson's conjecture} should hold if $\varepsilon_n$ satisfies the decay like in \eqref{e: Dini epsilon}. See \cite[Theorem A]{FTV23}, and for a more detailed discussion see \cite[Introduction]{FTV23}.

The second function defined in \cite{FTV23,FTV24} is the spectral $a(x,r)$-function discussed above (see also Definition \ref{d: a function}). 
In \cite{FTV23}, the authors prove a higher dimensional analogue of Conjecture \ref{c: carleson's conjecture} with the $a(x,r)$-function. In \cite{CTV25}, the first author, together with Tolsa and Villa characterized a class of Ahlfors regular domains in terms of equivalent decay on both the $\varepsilon_n(x,r)$ and $a(x,r)$ functions. In other words, if one assumes that $\Gamma$ is Ahlfors regular, then Carleson-measure control on the $\varepsilon_n(x,r)$-function is equivalent to the same Carleson measure control on the $a(x,r)$ function. For related definitions see \cite{CTV25}.  

Thus, a very natural question would be whether the decay in \eqref{eq: lambda infinity} can be replaced with analogous decay on $\varepsilon_n(x,r)$. We leave this as an open question. One observation is that it is not necessarily true that decay of the $\varepsilon_n$-function at a point implies decay of the $\lambda$-function (see Example \ref{ex:lambdavsepsilon}). 

Finally, we mention that Theorem \ref{thrm:main theorem} was proved in the case of $n=2$ with the $\varepsilon$-function by the first named author in \cite{Ca24}. We give a more detailed comparison between the current article and the planar case in Section \ref{ss: strategy}.

\subsection{Strategy of the proof}\label{ss: strategy}
Due to Theorem \ref{t:DKT}, it is sufficient to show that if the $\lambda(x,r)$-function satisfies the decay like in \eqref{eq: lambda infinity}, then $b\beta_{\Gamma}(x,r)$ also satisfies the same H\"{o}lder-decay like in \eqref{eq: lambda infinity}, but with possibly a smaller exponent. In order to get control $b\beta_{\Gamma}$, it is necessary to understand the behavior of $\Gamma$ inside of a ball, but, as we discussed above, the control on the $\lambda$-function in \eqref{eq: lambda infinity} only gives information about $\Omega_1,\Omega_2$, and $\Gamma$ \textit{on spheres}.  
With this in mind, the strategy of the current paper can be summarized in two steps: 
\begin{enumerate}
    \item[(i):](Separation property) show that on each spherical shell, $\partial B(x,r)$, the control on the $\lambda$-function implies that there exist two complementary spherical caps, $A^1:=A^1(x,r)$ and $A^2:=A^2(x,r)$ such that 
    \begin{itemize}
        \item $A^j\subset \Omega^j\cap \partial B(x,r)$, and 
        \item the distance from $A^j$ to a half sphere is smaller than a power of $r$, for $j=1,2$, and
    \end{itemize}
   
    \item[(ii):](Cohesiveness) show that the plane that determines both $A^1$ and $A^2$ (up to a small error) 
    does not oscillate too much as $r\downarrow 0$.
\end{enumerate}

Notice that when $n =2$ the rigid geometry of curves means that (i) follows immediately from the definition of $\varepsilon$. However, in higher dimensions the decay $\lambda(x,r)\leq Cr^{\alpha}$ does not directly imply the the existence of such spherical caps. For example,  consider the following domains in $\mathbb R^3$:
\begin{align*}
    \Omega^1\cap \partial B(x,r)&=\left(\partial B(x,r)\cap \{x_n-x>0\}\right)\setminus \{p^1\}, \qquad \, \text{and}\\
    \Omega^2\cap \partial B(x,r)&=\left(\partial B(x,r)\cap \{x_n-x<0\}\right)\setminus \{p^2\},
\end{align*}
where are $p^1$ and $p^2$ are the north and south poles of $\partial B(x,r)$, respectively. Then 
$$
\lambda_1\left(\Omega^1\cap \partial B(x,r)\right)=\lambda_1\left(\Omega^2\cap \partial B(x,r)\right)=\frac{n-1}{r^2}, 
$$
but there do not exist spherical caps $A^1$ and $A^2$ satisfying (i). Thus to prove (i) in $\partial B(x,r)$ we need not only the smallness of $\lambda(x,r)$ but its smallness at nearby points and comparable scales. However, as a preliminary result we show the smallness of the $\lambda$-function \emph{at a single point} implies the existence of spherical caps $A^1$ and $A^2$ such that
  \begin{itemize}
        \item $A^j$ agrees with $\Omega^j\cap \partial B(x,r)$ \textit{up to sets of quantitatively small relative capacity}, and 
        \item the distance from $A^j$ to a half sphere is smaller than a power of $r$, for $j=1,2$.
    \end{itemize}
In other words, $\partial B(x,r)\cap \partial \Omega$ is well-approximated by a hemisphere, \textit{up to sets of quantitatively small capacity}. The key ingredients in the proof of this weaker capacitary conclusion are the quantitative stability of the first eigenvalue (see Theorem \ref{t:AKN}) and the connection between the change in eigenvalue and the relative Sobolev capacity (see Theorem \ref{thrm: from BC06}). This argument is contained in Lemma \ref{lem: no sets of pos capacity far from BAHP}.

This preliminary estimate seems to be in the spirit of \cite[Theorems B, C]{FTV24}. However, it isn't clear to us whether the results of \cite{FTV24} imply our Lemma \ref{lem: no sets of pos capacity far from BAHP}, due to the technical conditions on the domains of integration in \cite[Theorems B,C]{FTV24}. Regardless, our methods of proof are different, and, in particular, we avoid any consideration of ``thick" parts of the boundary or any mention of Hausdorff content.

We now need to upgrade this weaker capacitary conclusion to the strong geometric conclusion in (i). Crucial to this ``upgrade'' step is the aforementioned introduction of a novel spherical function, the \emph{spherical bilateral $\beta$-numbers}:
\begin{equation}\label{e:sphjonesbeta}
    b\beta^s_E(x,r) := \inf_{L\text{ an affine $(n-1)$-space}} \sup_{y\in \partial B(x,r)\cap E} \frac{\mathrm{dist}(y, L)}{r} + \sup_{z\in \partial B(x,r)\cap L} \frac{\mathrm{dist}(z, E)}{r}.
\end{equation}
The $b\beta^s$-numbers are heavily inspired by the $b\beta$-numbers introduced by \cite{Re60} (see also \cite{J90}) discussed above in Section \ref{subsec: related work}. We emphasize that these $b\beta^s_E$-numbers measure the bilateral closeness of the set $E$ to a plane \textit{on $\partial B(x,r)$}. In our setting, (i) is essentially equivalent to the estimate $b\beta_{\Gamma}^s(x,r)\leq Cr^{\gamma}$ plus a mild ``separation" condition. 

 The difference between the desired control on $b\beta^s_{\Gamma}$, and our weaker condition on the capacity, is that the latter allows for subsets $\Gamma$ with small capacity to be well inside of $A^1$ or $A^2$. As a first step to ruling this out, we prove that the decay of the $\lambda$-function implies that $\Omega_1$ and $\Omega_2$ are complementary corkscrew domains (see Definition \ref{d:corkscrew} and Lemma \ref{lem:existence of corkscrews}).  
 As a consequence, if a single point of $\Gamma$ is well-inside $A^1(x_0,r)$, for instance, we can use the corkscrew condition to construct a sphere centered on $\Gamma$ that has large (in the sense of capacity) intersection with both $A_1(z,\rho)$ and $\Omega_2$, where $|z-x_0| = r$ and $\rho \approx r$. This violates the weaker capacitary condition and thus gives the desired decay for $b\beta_{\Gamma}^s$, see Lemma \ref{lem:no sets of zero capactiy far from BAHP}.

It should be mentioned that in \cite{CTV25}  existence of corkscrews is concluded from weaker control on the $\varepsilon_n$-function but under the additional assumption that $\Gamma$ is Ahlfors regular. Our proof in Lemma \ref{lem:existence of corkscrews} is different and is a straightforward application of the co-area formula and \cite[Lemma 5.3]{FTV23} (this is the only place where we use a result from \cite{FTV23}).

Now that we have shown the desired decay on the $b\beta_{\Gamma}^s$-numbers, it is left to show the cohesiveness of the approximating plane across scales in (ii). Due to the strength of the geometric condition in (i), the controlled oscillation of the planes as $r\downarrow 0$ follows from the triangle inequality and the fact that two $(n-1)$-spheres of similar size with close enough centers intersect in a $(n-2)$-dimensional sphere of comparable radius. We emphasize that the argument only holds in $n\geq 3$, and relies heavily on the rigid geometry obtained in (i). This argument is the content of Lemma \ref{l:sphericaltofull}, and concludes the proof of Theorem \ref{thrm:main theorem}.

We now take a moment to compare this current article with the case of $n=2$ in \cite{Ca24}. At a birds eye view, \cite{Ca24} follows the same strategy as outlined above: first prove a separation property and then prove cohesiveness across scales. As mentioned above, this first step is easier in the plane, see \cite[Lemma 3]{Ca24}. However, due to the disconnected nature of the intersection between two circles in $\mathbb R^2$ the second step, cohesiveness, requires completely different arguments.

\subsection{Outline of the article} We end with a quick outline of the article. In Section \ref{s:prelim} we collect some facts about eigenvalues of spherical domains, discuss the relationships between the $a$, $\lambda$ and $\varepsilon_n$ functions, and define relative capacity and prove a fact about the capacity of rectifiable sets. In Section \ref{s:analytic} we prove the separation property described in (i) above, which includes the proof of the existence of corkscrews, and the argument that the decay in the $\lambda$-function implies the corresponding decay on the $b\beta^s_{\Gamma}$-numbers. Finally in Section \ref{s:geometric} we prove the cohesiveness property described in (ii) above, which is the geometric argument that decay on the $b\beta^s_{\Gamma}$-numbers implies decay on the full $b\beta_{\Gamma}$-numbers. We then conclude the proof of our main Theorem \ref{thrm:main theorem}.

\medskip

\noindent {\bf AI Disclosure:} AI tools were used to convert the the authors' hand drawn diagrams into Tikz code, which is the content of Figures 3-9. Otherwise no AI tools were used in the manuscript.

\section{Examples and Preliminaries}\label{s:prelim}
We start by defining the main geometric functional $\lambda(x,r)$. Let $V$ denote an open subset of $\mathbb{S}^{n-1}$. Recall that the first eigenvalue for the Laplacian with Dirichlet boundary conditions in $V\subset \mathbb{S}^{n-1}$, denoted $\lambda_1(V)$, is defined as follows:
\begin{equation*}
    \lambda_1(V):=\inf_{u\in C_c^\infty(V)}\frac{\int |\triangledown u|^2}{\int|u|^2}. 
\end{equation*}
 Standard arguments show that the infimum in the definition of $\lambda_1(V)$ is achieved in by some $\phi_1^V\in C^\infty(V)$, which satisfies 
\begin{equation}\label{e:eigenfunction}
    \begin{cases}
    -\Delta \phi_1^{V}=\lambda_1(V)\phi_1^{V} &\text{in}\quad V\\
    \phi_1^{V}=0 &\text{on}\quad \partial V.
\end{cases}
\end{equation} 
To be completely precise $\phi_1^V$ will only vanish on the boundary of $V$ up to a polar set. However, this technical point will not come up in our arguments, so we do not elaborate on it further. Finally, if $V$ is connected, $\phi_1^V$ is unique up to a scalar multiple.

A simple scaling argument shows that if $\tilde{V} \subset \partial B(x,r)$ and we define $V = \frac{\tilde{V}-x}{r}\equiv \{y\mid ry + x \in \tilde{V}\} \subset \mathbb S^{d-1}$ then $ \lambda_1(V) = r^2\lambda_1(\tilde{V})$. 

For the remainder of the paper, let $\Omega_1$ and $\Omega_2$ be open and disjoint subsets of $\mathbb{R}^n$, with $n\geq 3$. 
Let $V_i(x,r):=\Omega_i\cap \partial B(x,r)$, which is a relatively open subset of  $\mathbb{S}^{n-1}$. With the above discussion in mind, we define the scale invariant geometric functions which are the main focus of this paper. 

\begin{definition}[$\lambda$-function]\label{def: lambda function} For $x\in \mathbb{R}^n$ and $r>0$. Define
    \begin{equation}\label{e:lambdadefinition}
    \lambda_{\Omega_1,\Omega_2}(x,r):=\left|r^{2}\lambda_1(V_1(x,r))-(n-1) \right|+\left|r^{2}\lambda_1(V_2(x,r))-(n-1) \right|.
\end{equation} 
\end{definition}
Let us make a couple of remarks; first we will often omit the sub-index $\Omega_1, \Omega_2$ if they are obvious in context. Second, if $V$ is an open hemisphere in $\mathbb S^{n-1}$ then $\lambda_1(V) = n-1$. Thus if $V_1(x,r), V_2(x,r)$ are complementary open hemispheres then $\lambda_{\Omega_1,\Omega_2}(x,r) = 0$ (this turns out to be almost an if and only if as we will see below). Thus the $\lambda$ function is a measurement of how far  $\Omega_1, \Omega_2$ are from being complementary half-spheres inside of $\partial B(x,r)$. 

We end this subsection by recalling some basic properties of how the  first Dirichlet eigenvalue of a geodesic ball in the sphere grows with that ball's volume. We introduce a bit of notation: first let $\sigma_n = \mathcal H^{n-1}(\mathbb S^{n-1})$. Then, given $\nu \in (0, \sigma_n)$ let $B_\nu \subset \mathbb S^{n-1}$ be a spherical cap of area $\nu$ (this is unique up to rotation, which we often do not care about but will specify when necessary). It is a straightforward consequence of symmetrization that for any $A\subset \mathbb S^{n-1}$ with volume $\nu$ we have $\lambda_1(B_\nu) \leq \lambda_1(A)$.  

The next result collects some facts about the function $\nu \mapsto \lambda_1(B_\nu)$. We stress that these facts are likely to be well-known to experts but include proofs for the sake of completeness. 

\begin{lemma}\label{l:prelims}
Let $\sigma_n, B_\nu$ be as above. Then
 \begin{itemize}
  \item[(i)] The function $\nu\mapsto \lambda_1(B_\nu)$ is in $C^1((0, \sigma_n))$ and strictly monotone decreasing. 
  \end{itemize}
  Moreover, there exists a dimensional constant, $c_n \in (0,1)$, such that the following holds:
  \begin{itemize}
    \item[(ii)] If $\nu > \frac{3}{4}\sigma_n$ or $\nu < \frac{1}{4}\sigma_n$ then $|\lambda_1(B_\nu) - (n-1)| > c_n$ 
     \item[(iii)] For $\nu_1, \nu_2\in [\frac{1}{4}\sigma_n, \frac{3}{4}\sigma_n]$ we have $$c_n |\nu_1 - \nu_2| \leq |\lambda_1(B_{\nu_1}) - \lambda_1(B_{\nu_2})| \leq c_n^{-1}|\nu_1 - \nu_2|.$$
     
 \end{itemize}
\end{lemma}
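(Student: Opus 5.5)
We parametrize caps by geodesic radius rather than by area. For $\theta\in(0,\pi)$ let $C_\theta$ be the geodesic ball of radius $\theta$ about the north pole, set $\Lambda(\theta)=\lambda_1(C_\theta)$, and let $\nu(\theta)=\sigma_{n-1}\int_0^\theta \sin^{n-2}t\,dt$. The map $\nu$ is a $C^\infty$ diffeomorphism from $(0,\pi)$ onto $(0,\sigma_n)$, with $\nu'(\theta)=\sigma_{n-1}\sin^{n-2}\theta>0$. Hence it suffices to establish the claims for $\Lambda$ and then compose with $\nu^{-1}$. The first eigenfunction of $C_\theta$ is radial, by uniqueness up to scaling and rotational invariance. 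So $\Lambda(\theta)$ is the first eigenvalue of the Sturm--Liouville problem
\[
-(\sin^{n-2}t\, f')'=\Lambda \sin^{n-2}t\, f,\qquad f'(0)=0,\quad f(\theta)=0,
\]
and the endpoint $\theta=\pi/2$ gives $f=\cos t$ and $\Lambda(\pi/2)=n-1$.

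\emph{Step 1 (monotonicity and regularity).} Strict monotonicity follows from domain monotonicity of Dirichlet eigenvalues. If $\theta<\theta'$ then $C_\theta\subsetneq C_{\theta'}$, so $\Lambda(\theta')\le \Lambda(\theta)$. Equality would make the extension by zero of $\phi_1^{C_\theta}$ an eigenfunction on $C_{\theta'}$ vanishing on an open set, which contradicts unique continuation. For differentiability I would use Hadamard's formula for a simple eigenvalue under a smooth domain variation (dilating the cap). With $f$ normalized in $L^2(C_\theta)$, it gives
\[
\Lambda'(\theta)=-\int_{\partial C_\theta}|\partial_\nu \phi|^2\,d\mathcal H^{n-2}=-\sigma_{n-1}\sin^{n-2}\theta\, f'(\theta)^2 .
\]
Alternatively, one can apply the implicit function theorem to the shooting function $F(\Lambda,\theta)=f_\Lambda(\theta)$. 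Here $f_\Lambda$ solves the ODE with $f_\Lambda(0)=1$, and it is real-analytic in $\Lambda$ and smooth in $\theta$ on $(0,\pi)$. Its partial derivative $\partial_\theta F=f'_\Lambda(\theta)$ is nonzero at a simple zero, which gives $\Lambda\in C^1$. The Hopf lemma gives $f'(\theta)\neq 0$, so $\Lambda'<0$ everywhere on $(0,\pi)$. Composing with $\nu^{-1}$ proves (i), with
\[
\frac{d}{d\nu}\lambda_1(B_\nu)=-f'(\theta)^2 .
\]

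\emph{Step 2 (claims (ii) and (iii)).} Both follow from continuity, compactness and strict monotonicity. Since $\nu\mapsto\lambda_1(B_\nu)$ is strictly decreasing and equals $n-1$ exactly at $\nu=\sigma_n/2$, we obtain for $\nu<\sigma_n/4$
\[
\lambda_1(B_\nu)-(n-1) > \lambda_1(B_{\sigma_n/4})-(n-1)>0 ,
\]
and similarly for $\nu>3\sigma_n/4$. This gives (ii) with $c_n$ no larger than the minimum of the two positive gaps. For (iii), the derivative $-f'(\theta)^2$ is continuous and strictly negative on the compact interval $\nu\in[\sigma_n/4,3\sigma_n/4]$, so its absolute value is bounded above and below by positive constants. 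The mean value theorem then gives the two-sided Lipschitz bound, after shrinking $c_n$.

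\emph{Main obstacle.} The only delicate point is the continuity and nonvanishing of the derivative in Step 1. That is, we must justify the Hadamard formula, or the implicit function argument, with $f'(\theta)\ne0$ and with $f'(\theta)$ depending continuously on $\theta$. Working with the one-dimensional ODE avoids PDE domain-variation technicalities. The only care needed is the regular singular point at $t=0$, where $f$ is smooth and even in $t$. Away from $t=0$, continuous dependence of solutions on the parameters $(\Lambda,t)$ is standard. $f'(\theta)\neq0$ holds because $f(\theta)=f'(\theta)=0$ would force $f\equiv0$ by ODE uniqueness at the regular point $\theta\in(0,\pi)$.
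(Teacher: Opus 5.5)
Your proposal is correct and is essentially the paper's argument: a Hadamard domain-variation formula for $\frac{d}{d\nu}\lambda_1(B_\nu)$, nonvanishing of the normal derivative on $\partial B_\nu$ (via Hopf, or your ODE-uniqueness remark), and then strict monotonicity, continuity and compactness for (ii) and (iii). Your geodesic-radius parametrization and the explicit value $\frac{d}{d\nu}\lambda_1(B_\nu)=-f'(\theta)^2$ (with the correct negative sign) are only cosmetic differences. The one slip is in your alternative shooting argument: $\partial_\theta F\neq 0$ only makes the first zero $\theta$ a $C^1$ function of $\Lambda$, so to invert and get $\Lambda\in C^1$ in $\theta$ you also need $\partial_\Lambda F\neq 0$ there, which follows from the identity $\sin^{n-2}\theta\, f_\Lambda'(\theta)\,\partial_\Lambda f_\Lambda(\theta)=\int_0^\theta \sin^{n-2}t\, f_\Lambda(t)^2\,dt>0$ at a zero $\theta$ of $f_\Lambda$.
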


\begin{proof}
  Let $B_\nu$ be a spherical cap, which we can rotate so that $B_\nu = \{x\in \mathbb S^{n-1}\mid x_n \geq f(\nu)\}$ where $f: (0, \sigma_n) \rightarrow (-1,1)$ is a strictly monotone decreasing $C^\infty$-smooth function (i.e. the height of the spherical cap increases as the volume decreases). It is then a special case of  \cite[Theorem 5.7.1]{henrot} that $\nu\mapsto \lambda_1(B_\nu)$ is in $C^1((0, \sigma_n))$. Furthermore differentiating this functional (see \cite[equation (A.10)]{ESV} and note, in the notation used there, that we can write $\partial B_{v-t}$ as the graph of a constant function $\xi = f(v) - f(v-t)$ over $\partial B_v$) we have that $\frac{d}{dv}|_{v=v_0}\lambda_1(B_{v}) = c\int_{\partial B_{v_0}}|\partial_{\nu}\phi^{B_{v_0}}_1|^2\, d\mathcal H^{n-1}$. Here the constant $c = -f'(v_0) > 0$ but the precise formula for $c$, as opposed to its sign, is not important for our argument. Note that $-\Delta_{\mathbb S^{n-1}} (-\phi_1^{B_v}) = \lambda_1(B_v)(-\phi_1^{B_v}) \leq 0$. So by the Hopf maximum principle for subsolutions, $|\partial_\nu (- \phi_1^{B_v})| = |\partial_{\nu}\phi^{B_v}_1| >0$ on $\partial B_v$ and so we have proven the first bullet point.

    Since $\lambda_1(B_{\frac{1}{2}\sigma_n})=n-1$, the proof of (ii) follows from (i), taking $\bar{c}_n=\min\{(n-1)-\lambda_1(B_{\frac{3}{4}\sigma_n}),\, \lambda_1(B_{\frac{1}{4}\sigma_n})-(n-1)\}$ (we will pick $c_n$ smaller than $\bar{c}_n$ at the end of the proof).
    To finish the proof we recall from the proof of part (i), that $v_0\mapsto \frac{d}{dv}|_{v=v_0}\lambda_1(B_v)$ is a continuous function which does not vanish for $v_0 \in (0, \sigma_n)$. Thus the map $t\mapsto \lambda_1(B_{\frac{1}{2}\sigma_n + t})$ is bi-Lipschitz for $t\in [-\sigma_n/4, \sigma_n/4]$. Call this bi-Lipschitz constant $\tilde{c}^{-1}_n$. We finish by setting $c_n = \min\{\tilde{c}_n, \bar{c}_n\}$  
\end{proof}

\subsection{Relationship to other spherical functions}\label{subsec: comparing functions}
Given an open set $V\subset \mathbb S^{n-1}$, there exists a unique (non-negative) constant $\alpha = \alpha(V)$ such that $u(r,\theta) \equiv r^\alpha \phi_1^V(\theta)$ is harmonic in the cone over $V$. We call $\alpha(V)$ the \textit{characteristic constant of $V$}. The harmonicity of $u$ in the cone gives the following relationship between $\lambda_1(V)$ and $\alpha(V)$ 
\begin{equation}\label{e: alpha to lambda}
    \alpha(V)=\frac{-(n-2)+\sqrt{(n-2)^2+4\lambda_1(V)}}{2}.
\end{equation}

Note that if $V$ is a hemisphere, then $\lambda_1(V) = n-1$ and $\alpha(V) =1$. Furthermore the Friedland-Hayman inequality \cite{FH76}, states that 
$$
\alpha(V_1(x,r))+\alpha(V_2(x,r))-2\geq 0,
$$
with equality only when $V_1(x,r)$ and $V_2(x,r)$ are complementary hemispheres (up to sets of capacity zero).  This motivates the $a(x,r)$ function, introduced in \cite{FTV24}

\begin{definition}[$a$-function]\label{d: a function} For $x\in \mathbb{R}^n$ and $r>0$. Define
    $$a_{\Omega_1,\Omega_2}(x,r)=\min\{1, \alpha(V_1(x,r))+\alpha(V_2(x,r))-2\},$$ 
    where $\alpha(V)$ is the characteristic constant of $V$, as in \eqref{e: alpha to lambda}. 
\end{definition}

\begin{remark}\label{r:alphaandlambda}
We note that for $V\subset \mathbb{S}^{n-1}$, when $\alpha(V)$ is close to 1, $\lambda_1(V)$ is close to $n-1$, quantitatively. For each $n$ there exists a $C = C(n) > 1$ such that if $|\lambda_1(V) - (n-1)| < 1/8$ then \begin{equation}\label{e:comparativelysmall}
        C^{-1}|\lambda_1(V) -(n-1)| \leq |\alpha(V) - 1| \leq C|\lambda_1(V) - (n-1)|. 
    \end{equation}
Indeed, writing $\lambda_1(V) = (n-1) + x$ we can see that $$\alpha(V) - 1 = \frac{-n + \sqrt{n^2 +4x}}{2} \approx \frac{x}{n}$$ as long as $|x| < 1/8$ (using Taylor's theorem). 

It is also the case that \eqref{e:comparativelysmall} holds when we assume that $|\alpha(V) - 1| < 1/8$. Indeed, 
$$
\lambda_1(V)-(n-1)=\alpha(V)^2+\alpha(V)(n-2)-(n-1)=(\alpha(V)+n-1)\left(\alpha(V)-1\right),
$$
and since $1-\frac{1}{8}\leq \alpha(V)\leq 1+\frac{1}{8}$, \eqref{e:comparativelysmall} follows.

\end{remark}

Indeed, from the remark above it is clear that $\lambda(x,r) \leq Cr^{\alpha}$ implies that $a(x,r) \leq \tilde{C}r^\alpha$ as long as $r < r_0$ for some $r_0$ taken small enough. It turns out that opposite is true as well (up to a loss in the exponent which may not be sharp but is irrelevant to our subsequent discussion):

\begin{lemma}\label{l:alphavslambda}
    Let $\Omega_1, \Omega_2$ be disjoint open subsets of $\mathbb R^n$ and assume for $x_0\in \partial \Omega_1$ and $r < r_0$ we have $a(x_0,r) \leq Cr^\alpha$. There exists a $r_1$ (depending on $r_0$) and a $C_1$ (depending on $n$ and $C$) such that $\lambda(x_0,r) \leq C_1 r^{\alpha/2}$ for all $r < r_1$.
\end{lemma}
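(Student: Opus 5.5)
The plan is to reduce everything to spherical caps by symmetrization, and then use the fact that the Friedland--Hayman functional on caps is nondegenerate at the hemisphere. Fix $r<r_0$ so small that $Cr^\alpha<1$; then the truncation in Definition \ref{d: a function} is inactive and
$$\alpha(V_1(x_0,r))+\alpha(V_2(x_0,r))-2\le Cr^\alpha.$$
Write $\nu_i=\mathcal H^{n-1}(r^{-1}(V_i(x_0,r)-x_0))$. Since $V_1, V_2$ are disjoint, $\nu_1+\nu_2\le\sigma_n$. By symmetrization, $\lambda_1(V_i)\ge\lambda_1(B_{\nu_i})$. By \eqref{e: alpha to lambda}, $\alpha$ is increasing in $\lambda_1$, so $\alpha(V_i)\ge\alpha(B_{\nu_i})$. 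Since $\nu\mapsto\alpha(B_\nu)$ is decreasing (Lemma \ref{l:prelims}(i)) and $\nu_2\le\sigma_n-\nu_1$, we get
$$Cr^\alpha\ \ge\ g(\nu_1):=\alpha(B_{\nu_1})+\alpha(B_{\sigma_n-\nu_1})-2 .$$

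The key step is a lower bound $g(\nu)\ge c|\nu-\sigma_n/2|^2$ for all $\nu\in(0,\sigma_n)$.

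Far from $\sigma_n/2$ this is a compactness statement. By the Friedland--Hayman inequality (equality only for complementary hemispheres), $g>0$ off $\nu=\sigma_n/2$. Away from the endpoints $g$ is continuous. Near the endpoints, one of the two caps is tiny, so its $\alpha$ blows up while the other stays at least $0$. Hence $g\ge c_0>0$ outside any neighborhood of $\sigma_n/2$.

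Near $\sigma_n/2$, $g$ is symmetric about $\sigma_n/2$ and vanishes there, so $g'(\sigma_n/2)=0$. What is needed is $g''(\sigma_n/2)>0$. I expect this nondegeneracy to be the main obstacle, since Lemma \ref{l:prelims} only gives $C^1$ regularity.

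My first attempt would be explicit. Caps are rotationally symmetric, so $\phi_1^{B_\nu}$ solves a Legendre-type ODE in the polar angle, depending analytically on the cap height $f(\nu)$. Hence $\nu\mapsto\alpha(B_\nu)$ is real-analytic. Its second derivative at the hemisphere can then be computed from the second-order Hadamard formula, or from the known expansion of the Legendre-function zero near degree $1$. Alternatively, one can invoke the classical fact used in Friedland--Hayman's proof that $\alpha(B_\nu)$, written as a function of the appropriate variable, is strictly convex.

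Either way, $g(\nu_1)\le Cr^\alpha$ then forces $|\nu_1-\sigma_n/2|\le C'r^{\alpha/2}$. This quadratic behavior is exactly the source of the halved exponent.

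Now conclude. Since $\nu_1$ is within $C'r^{\alpha/2}$ of $\sigma_n/2$, and $\sigma_n-\nu_1$ likewise, Lemma \ref{l:prelims}(iii) together with \eqref{e: alpha to lambda} gives
$$\alpha(V_1)\ \ge\ \alpha(B_{\nu_1})\ \ge\ 1-C''r^{\alpha/2}.$$
For $V_2$, we have $\nu_2\le\sigma_n-\nu_1$, so monotonicity gives $\alpha(V_2)\ge\alpha(B_{\nu_2})\ge\alpha(B_{\sigma_n-\nu_1})\ge 1-C''r^{\alpha/2}$.

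Combining with the upper bound on the sum,
$$\alpha(V_1)\le 2+Cr^\alpha-\alpha(V_2)\le 1+Cr^\alpha+C''r^{\alpha/2},$$
and symmetrically for $V_2$. Thus $|\alpha(V_i)-1|\le C_2r^{\alpha/2}$ for $i=1,2$.

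Choose $r_1\le r_0$ so small that $C_2r_1^{\alpha/2}<1/8$. Remark \ref{r:alphaandlambda} then converts this into $|\lambda_1(V_i)-(n-1)|\le CC_2r^{\alpha/2}$. Summing over $i=1,2$ gives $\lambda(x_0,r)\le C_1r^{\alpha/2}$ for $r<r_1$.
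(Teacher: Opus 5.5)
\textbf{Verdict: there is a genuine gap at the key step.}

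\textbf{What is correct.} Your reduction to caps is sound:
\begin{itemize}
\item symmetrization, together with the monotonicity of $\lambda\mapsto\alpha$ and of $\nu\mapsto\alpha(B_\nu)$, correctly yields $g(\nu_1)\le Cr^\alpha$;
\item the compactness argument away from $\nu=\sigma_n/2$ is fine;
\item once $|\nu_1-\sigma_n/2|\lesssim r^{\alpha/2}$ is known, your two-sided bookkeeping with Lemma \ref{l:prelims}(iii) and Remark \ref{r:alphaandlambda} finishes the proof.
\end{itemize}

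\textbf{The gap.} It is the step you flag yourself. The quadratic bound $g(\nu)\ge c|\nu-\sigma_n/2|^2$ near $\sigma_n/2$, i.e.\ $g''(\sigma_n/2)>0$, is asserted but never established. It is the entire quantitative content of the lemma.
\begin{itemize}
\item Friedland--Hayman strictness and the symmetry $\nu\mapsto\sigma_n-\nu$ give only $g>0$ off $\sigma_n/2$ and $g'(\sigma_n/2)=0$.
\item If $g$ vanished to order $2k$ at the hemisphere, your argument would only give $r^{\alpha/(2k)}$.
\item None of your three suggested routes is carried out.
\item The last route (strict convexity) is insufficient as stated. Strict convexity, even in a variable in which $\nu\mapsto\sigma_n-\nu$ is affine such as the polar radius, is compatible with a vanishing second derivative at the hemisphere. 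You need a positive lower bound on that second derivative.
\end{itemize}

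\textbf{The claim itself is true.} For $n=3$, expanding $P_\nu(0)$ and $P_\nu'(0)$ around $\nu=1$ shows that a cap of polar radius $\frac{\pi}{2}-s$ has characteristic constant $1+s+2(1-\ln 2)s^2+O(s^3)$. Hence $g(\nu)=4(1-\ln 2)s^2+O(s^4)$ when $B_\nu$ has polar radius $\frac{\pi}{2}-s$. For general $n$, however, you need either a genuine computation (zeros of the Gegenbauer-type radial eigenfunctions, or a second-order shape derivative) or a citation.

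\textbf{What the paper does.} It takes the citation route and never passes to caps. It invokes the quantitative Friedland--Hayman estimate \cite[equation (2.23)]{AKN21}. Combined with $\alpha(V_1)+\alpha(V_2)\le 2+Cr^\alpha$, this gives $|\alpha(V_1)-1|^2+|\alpha(V_2)-1|^2\lesssim_n \alpha(V_1)+\alpha(V_2)-2\le Cr^\alpha$ directly for the sets $V_1,V_2$. Remark \ref{r:alphaandlambda} then concludes exactly as in your last step. The symmetrization and the separate lower/upper bookkeeping are unnecessary.

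\textbf{To complete your route,} you must either supply the second-derivative computation for all $n\ge 3$, or quote such a quantitative estimate for complementary caps. In the latter case, your reduction to caps adds nothing over applying the estimate to $V_1,V_2$ directly.
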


\begin{proof} Fix $r\leq r_0$ and let $V_i:=V_i(x_0,r)$.
    The desired result follows from Remark \ref{r:alphaandlambda} as long as we can show that $|\alpha(V_i) - 1| \leq \tilde{C}r^{\alpha/2}$ for each $i= 1,2$ (see \eqref{e:comparativelysmall}. 
    
    Since $a(x_0,r)\leq Cr^{\alpha}$, we have $\alpha(V_1)+\alpha(V_2)\leq 2+Cr^{\alpha}$. Combining this observation with \cite[equation (2.23)]{AKN21}, we have 
    \begin{equation*}
        \frac{|\alpha(V_1)-1|^2}{2+Cr^{\alpha}}+\frac{|\alpha(V_2)-1|^2}{2+Cr^{\alpha}}\lesssim_n \alpha(V_1)+\alpha(V_2)-2\leq Cr^{\alpha}. 
    \end{equation*}
    Thus, for $r\leq 1$,
    \begin{equation*}
        \left|\alpha(V_1)-1\right|\leq C_1r^{\alpha/2},
    \end{equation*}
    where $C_1$ depends only on $C$ and $n$. The result now follows from Remark \ref{r:alphaandlambda}.
   
\end{proof}

The upshot of Lemma \ref{l:alphavslambda} is that we could have written Theorem \ref{thrm:main theorem} in terms of the $a(x,r)$ function. We chose to use the $\lambda$ function because it is more convenient for us to work with eigenvalues than with the associated characteristic constants. However we should note that this passage from the $a(x,r)$-function to the $\lambda(x,r)$-function (and in reverse) relies on our assumption of pointwise smallness and thus it is not clear to us whether other results in the literature, e.g. the Dini-type results of \cite{FTV23}, would work with the $a$-function replaced by the $\lambda$-function.

Our final spherical function, which is more geometric in nature, was defined in \cite{FTV24}. This geometric spherical function also captures how close $\Omega_1 \cap \partial B(x,r)$ and $\Omega_2 \cap \partial B(x,r)$ are to being complementary hemispheres but this time by comparing their measure to that of a hemisphere. See Definition \ref{d:defepsilonn} and Figure \ref{f:epsilonn}
\begin{figure}[h]
    \centering

\begin{tikzpicture}[scale=.65,every node/.style={minimum size=1cm}]


\def\R{4} 

\def\angEl{10} 
\def\angAz{-100} 
\def\angPhiOne{-110} 
\def\angPhiTwo{-45} 
\def\angBeta{30} 


\pgfmathsetmacro\H{\R*cos(\angEl)} 
\LongitudePlane[xzplane]{\angEl}{\angAz}
\LatitudePlane[equator]{\angEl}{0}
\fill[ball color=white!10] (0,0) circle (\R); 

\DrawLatitudeCircle[\R]{0} 


 \def\eps{1.2}
\coordinate (D) at (-4,{.2*\eps});
\def\d{0};

\coordinate (E) at (-2,{-.6*\eps-.5});
\def\e{30};
\coordinate (F) at (-2.4,{.4*\eps-.3});
\def\f{210};
    \coordinate (G0) at (-1.75,{-.15*\eps-.5});
 \def\gz{91};
    \coordinate (G1) at (-1.04,{-.17*\eps-.5});
 \def\ge{310};


\coordinate (G) at (-1,{-.2*\eps-.5});
\def\g{-50};
   
\coordinate (H) at (0,{0*\eps-.4});
\def\h{30};
    \coordinate (H1) at (-.46,{-0.25*\eps-.4});
\def\ha{35};
    \coordinate (H2) at (.44,{-0.24*\eps-.4});
\def\hz{122};

   
\coordinate (I) at (.8,{-.1*\eps-.8});
\def\i{20};
   
\coordinate (J) at (2.6,{.3*\eps-.4});
\def\j{10};
\coordinate (K) at (3.2,{0*\eps-.4});
\def\k{-90};
\coordinate (K1) at (1.35,{-0.2*\eps-.4});
\def\ka{35};

   
    \coordinate (L) at (3,{-.6*\eps-.4});
\def\l{-90};
\coordinate (M) at (3.88,{-.8*\eps});
\def\m{15};
   
 \def\addsm{.9}
\coordinate (T) at (-2,{-.6*\eps+\addsm});
\def\t{40};
   
\coordinate (U) at (-2.4,{.4*\eps+\addsm});
\def\u{-50};
\coordinate (U1) at (-1.93,{-.25*\eps+\addsm});
\def\ua{105};
    \coordinate (U2) at (-1.85,{-.24*\eps+\addsm});
\def\uz{121};


\coordinate (V) at (0,{0*\eps-.5});
\def\v{30};
   
\coordinate (W) at (0,{0*\eps+\addsm});
\def\w{30};
\coordinate (W1) at (-0.16,{-0.18*\eps+\addsm});
\def\wa{80};
    \coordinate (W2) at (0.6,{-0.18*\eps+\addsm});
\def\wz{143};
   
\coordinate (X) at (.8,{-.1*\eps+\addsm-.1});
\def\x{20};
\coordinate (Y) at (2.6,{.3*\eps+\addsm});
\def\y{10};
\coordinate (X1) at (.85,{-.1*\eps+\addsm-.1});
\def\xa{22};
\coordinate (Y1) at (3.23,{-0.4*\eps+\addsm});
\def\ya{95};

   
\coordinate (Z) at (3.2,{0*\eps+\addsm});
\def\z{-90};

    \coordinate (A) at (-2,2);
\def\a{45};
\coordinate (B) at (-1.8,2.2);
\def\b{30};
\coordinate (C) at (-1.6,2.4);
\def\c{10};
\coordinate (N1) at (-1.4,2.3);
\def\n{0};
\coordinate (O1) at (-1,2.2);
\def\o{-100};
\coordinate (P) at (-1.5,2.1);
\def\p{200};
    \coordinate (R) at (-1.9,1.8);
\def\r{190};
    \coordinate (S1) at (-1.2,2.5);
\def\s{-20};

\filldraw[nearly transparent, fill=frontshade] 
    (M) arc (-13.7:175:4) to
(D) to [out=\d,in={\e-180}]
(E) to [out=\e,in={\f-180}]
(F) to [out=\f-180,in={\g-180}]
(G) to [out=\g,in={\h-180}]
(H) to [out=\h,in={\i-180}]
(I) to [out=\i,in={\j-180}]
(J) to [out=\j,in={\k-180}]
(K) to [out=\k,in={\l-180}]
(L) to [out=\l,in={\m-180}]
    (M) -- cycle;

\filldraw[semitransparent, fill=red] 
   (-1.75,-.68) to [out={\gz}, in={\f-180}]
   (-2.4,.18) to  [out={\f-180}, in={\ge-180}]
   (-1.04,-0.69) to [out={178}, in={3}] (-1.75,-.64);

\filldraw[semitransparent, fill=red] 
   (-.46,{-0.25*\eps-.4}) to [out={\ha}, in={\h-180}]
   (0,{0*\eps-.4}) to  [out={\h}, in={\hz}]
   (.44,{-0.24*\eps-.4}) to [out={178}, in={3}] (-.46,{-0.25*\eps-.4});

\filldraw[semitransparent, fill=red] 
   (1.35,{-0.2*\eps-.4}) to [out={\ka}, in={\j-180}]
   (2.6,{.3*\eps-.4}) to  [out={\j}, in={\k-180}]
   (3.2,{0*\eps-.42}) to [out={192}, in={0.5}] (1.35,{-0.2*\eps-.4});

\filldraw[nearly transparent, fill=red] 
   (-1.94,{-.25*\eps+\addsm}) to [out={\ua}, in={\u-180}]
   (-2.4,{.4*\eps+\addsm}) to  [out={\u}, in={\uz-180}]
   (-1.85,{-.24*\eps+\addsm}) to [out={185}, in={0.5}] (-1.94,{-.25*\eps+\addsm});

\filldraw[nearly transparent, fill=red] 
   (-0.16,{-0.18*\eps+\addsm}) to [out={\wa}, in={\w-180}]
   (0,{0*\eps+\addsm}) to  [out={\w}, in={\wz-180}]
   (0.6,{-0.18*\eps+\addsm}) to [out={180}, in={0}] (-0.16,{-0.18*\eps+\addsm});

\filldraw[nearly transparent, fill=red] 
   (.85,{-.1*\eps+\addsm-.1}) to [out={\xa}, in={\y-180}]
   (2.6,{.3*\eps+\addsm}) to  [out={\y}, in={\z-180}]
   (3.2,{0*\eps+\addsm}) to [out={\z}, in={\ya-180}]
   (3.23,{-0.4*\eps+\addsm}) to [out={171}, in={0}] (.85,{-.1*\eps+\addsm-.1});

    \filldraw[nearly transparent, fill=backshade, draw=backshade] 
    (M) arc (-13.7:175:4) to
(D) to [out=\d,in={\t-180}]
(T) to [out=\t,in={\u-180}]
(U) to [out=\u,in={\v-180}]
(V) to [out=\v,in={\w-180}]
(W) to [out=\w,in={\x-180}]
(X) to [out=\x,in={\y-180}]
(Y) to [out=\y,in={\z-180}]
(Z) to [out=\z,in={\m-180}]
    (M) -- cycle;

 \draw[very thick,blue] 
(D) to [out=\d,in={\e-180}]
(E) to [out=\e,in={\f-180}]
(F) to [out=\f-180,in={\g-180}]
(G) to [out=\g,in={\h-180}]
(H) to [out=\h,in={\i-180}]
(I) to [out=\i,in={\j-180}]
(J) to [out=\j,in={\k-180}]
(K) to [out=\k,in={\l-180}]
(L) to [out=\l,in={\m-180}]
    (M);
  \draw[very thick,dashed,blue] 
(D) to [out=\d,in={\t-180}]
(T) to [out=\t,in={\u-180}]
(U) to [out=\u,in={\v-180}]
(V) to [out=\v,in={\w-180}]
(W) to [out=\w,in={\x-180}]
(X) to [out=\x,in={\y-180}]
(Y) to [out=\y,in={\z-180}]
(Z) to [out=\z,in={\m-180}]
    (M);
\filldraw[ semitransparent, fill=red] 
(A) to [out=\a,in={\b-180}]
(B) to [out=\b,in={\c-180}]
(C) to [out=\c,in={\n-180}]
(N1) to [out=\n,in={\s-180}]
    (S1) to [out=\s,in={\o-180}]
(O1) to [out=\o,in={\p-180}]
(P) to [out=\p,in={\r-180}]
(R) to [out=\r,in={\a-180}] (A);

\draw[very thick,blue] 
     (A) to [out=\a,in={\b-180}]
(B) to [out=\b,in={\c-180}]
(C) to [out=\c,in={\n-180}]
(N1) to [out=\n,in={\s-180}]
    (S1) to [out=\s,in={\o-180}]
(O1) to [out=\o,in={\p-180}]
(P) to [out=\p,in={\r-180}]
(R) to [out=\r,in={\a-180}] (A);

\coordinate[mark coordinate] (O) at (0,0);
\coordinate (N) at (0,{\H+.05});
\coordinate (S) at (0,{-\H-.05});


\node at (0.2,.2) {$x$};

\filldraw[ semitransparent, fill=red] 
(-4,0) arc (180:219: 4 cm and .7 cm) to[out=125, in=\d] (D) --cycle;
\end{tikzpicture}
    \caption[The region $(\partial B(x,r)\cap H^+)\setminus \Omega_1$ is denoted in red.]{The region $(\partial B(x,r)\cap H^+)\setminus \Omega_1$ is denoted in red.
    }
     \label{f:epsilonn}
\end{figure}
\begin{definition}\label{d:defepsilonn}($\varepsilon_n$-function)
    For $x\in \mathbb{R}^n$ and $r>0$, define 

    \begin{equation}\label{e:defepsilonn}
\varepsilon_n(x,r):=\frac{1}{r^{n-1}} \inf_{H^+}\mathcal{H}^{n-1}\bigg(\big[(\partial B(x,r)\cap H^+)\setminus \Omega_1\big] \cup \big[(\partial B(x,r)\cap H^-)\setminus \Omega_2\ \big]\bigg), 
\end{equation}
where the infimum is taken over all half-spaces $H^+$ with $x\in \partial H^+$.\footnote{Figure \ref{f:epsilonn} was created using code based on \cite{trzeciak2008stereographic,miani2009spherical}}
\end{definition}

To understand how this function relates to the $\lambda$-function, we recall the main result of \cite{AKN21} (see also \cite{BDV} for an earlier but weaker result):

\begin{theorem}[\cite{AKN21}, Theorem 1.1]\label{t:AKN}
    Fix $0<\nu<\sigma_n$. There exists a constant $c:=c(n, \nu)$ such that for any open set $V\subset \mathbb{S}^{n-1}$ with $\mathcal H^{n-1}(V)=\nu$ we have 
    \begin{equation}\label{eq: AKN}
        \lambda_1(V)-\lambda_1(B_\nu)\geq c \inf_{x\in \mathbb{S}^{n-1}}\left( \mathcal{H}^{n-1}(V\Delta B(x))^2+\int_{\mathbb{S}^{n-1}}|\phi_1^{V}-\phi_1^{B(x)}|^2dx\right),
    \end{equation}
    where $B(x)$ is a spherical cap centered at $x$ with radius uniquely chosen so that $\mathcal H^{n-1}(B(x))=\nu$. Recall that $B_\nu$ denotes any spherical cap with volume $\nu$.
\end{theorem}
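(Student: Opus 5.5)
The plan is to follow the strategy of Brasco--De Philippis--Velichkov for the Euclidean quantitative Faber--Krahn inequality and transplant it to $\mathbb{S}^{n-1}$. The argument has three stages: a reduction to small deficit, a selection principle producing smooth nearly-spherical competitors, and a Fuglede-type second variation estimate for those competitors.

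\emph{Reduction.} First we reduce to sets with small deficit. If $\lambda_1(V)-\lambda_1(B_\nu)\geq \delta_0$ for a fixed $\delta_0=\delta_0(n,\nu)>0$, the inequality \eqref{eq: AKN} is trivial. The right-hand side is bounded by a constant depending on $n,\nu$, since $\mathcal H^{n-1}(V\Delta B(x))\le 2\sigma_n$ and the eigenfunctions are $L^2$-normalized. So it suffices to treat small deficit. We also note that the eigenfunction term is controlled by the asymmetry term plus the deficit. Testing the Rayleigh quotient of $B(x)$ with a cut-off of $\phi_1^V$, together with the spectral gap $\lambda_2(B(x))>\lambda_1(B(x))$, gives
\begin{equation*}
\|\phi_1^V-\phi_1^{B(x)}\|_{L^2}^2\lesssim \big(\lambda_1(V)-\lambda_1(B_\nu)\big)+\mathcal H^{n-1}(V\Delta B(x)).
\end{equation*}
(Here $\phi_1^V$ is extended by zero.) With some care in how the cut-off is taken, this reduces the theorem essentially to the measure-asymmetry estimate $\lambda_1(V)-\lambda_1(B_\nu)\gtrsim \mathcal H^{n-1}(V\Delta B(x))^2$.

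\emph{Selection principle.} We argue by contradiction. Suppose there are sets $V_k$ with $\mathcal H^{n-1}(V_k)=\nu$, deficit $\delta_k\to0$, and deficit $\le \frac1k$ times the squared asymmetry. Replace each $V_k$ by a minimizer $U_k$ of a penalized functional of the form
\begin{equation*}
U\mapsto \lambda_1(U)+\Lambda\,\big|\mathcal H^{n-1}(U)-\nu\big|+\sqrt{\varepsilon_k^2+\eta\,(\mathcal A(U)-\varepsilon_k)^2}.
\end{equation*}
Here $\mathcal A$ is the asymmetry, $\varepsilon_k=\mathcal A(V_k)$, and the square-root term forces $\mathcal A(U_k)\approx\varepsilon_k$; the volume constraint is thereby relaxed. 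By the existence theory for shape functionals in the quasi-open setting, together with the $\gamma$-convergence compactness theory, the $U_k$ exist and still violate the inequality. Passing to the state function $u_k=\phi_1^{U_k}$ turns this into a one-phase Alt--Caffarelli-type free boundary problem on the sphere, with a perturbation. The $u_k$ then satisfy the following properties.
\begin{itemize}
\item Uniform Lipschitz bounds and nondegeneracy hold.
\item Each $u_k$ is an almost-minimizer of $\int|\nabla u|^2-\lambda u^2+\Lambda|\{u>0\}|$.
\item $\{u_k>0\}\to B(x)$ in the Hausdorff sense.
\end{itemize}
From the $\varepsilon$-regularity theory for (almost-)minimizers of the one-phase problem (De Silva improvement of flatness, adapted to a Riemannian background), the free boundaries $\partial U_k$ are $C^{1,\gamma}$ normal graphs over $\partial B(x_k)$ converging to $0$. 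Schauder theory for the transmission condition $|\nabla u_k|=$const then upgrades these to $C^{2,\gamma}$.

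\emph{Fuglede estimate.} For nearly-spherical sets $\partial U=\{\exp(h\,\nu_{\partial B})\}$ with $\|h\|_{C^{2,\gamma}}$ small, we expand $\lambda_1$ to second order in $h$ using the shape derivative formulas, as in the proof of Lemma~\ref{l:prelims}. The second variation is a quadratic form $Q(h)\approx\|h\|_{H^{1/2}}^2$ restricted to the complement of its kernel. Because $B(x)$ is a cap in the round sphere, the kernel consists of the infinitesimal rotations (the first spherical harmonics restricted to $\partial B(x)$) together with the volume direction. The volume direction is removed by the constraint. The rotations are removed by choosing $x_k$ optimally, or by applying a rotation that makes $h$ orthogonal to the first spherical harmonics. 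This yields $\lambda_1(U_k)-\lambda_1(B_\nu)\gtrsim \|h\|_{L^1}^2\approx\mathcal A(U_k)^2$, which contradicts the choice of $U_k$.

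\emph{Main obstacle.} I expect the hardest step to be the regularity and convergence of the selected minimizers. The free boundary theory must handle the non-sign-definite term $-\lambda u^2$, the asymmetry penalty (which is only Lipschitz with respect to $L^1$ perturbations), and the spherical metric. A second difficulty is the coercivity of $Q$ for \emph{every} $\nu\in(0,\sigma_n)$, not just for hemispheres. Here I would try an explicit Fourier decomposition of $h$ into spherical harmonics on $\partial B(x)\cong\mathbb S^{n-2}$ and the corresponding Dirichlet-to-Neumann eigenvalues on the cap, checking strict positivity for all modes of degree $\ge2$.
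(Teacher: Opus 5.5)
The paper does not prove this theorem; it quotes it from \cite{AKN21}. Measured against what the theorem asserts, your outline has a genuine gap in its first step, where you reduce the eigenfunction term to the measure term.

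Write $\delta:=\lambda_1(V)-\lambda_1(B_\nu)$ and $\mathcal A:=\mathcal H^{n-1}(V\Delta B(x))$. Your cut-off estimate is $\|\phi_1^V-\phi_1^{B(x)}\|_{L^2}^2\lesssim \delta+\mathcal A$, with $\mathcal A$ to the \emph{first} power. Combined with $\mathcal A^2\lesssim\delta$, this yields only $\|\phi_1^V-\phi_1^{B(x)}\|_{L^2}^2\lesssim\delta^{1/2}$, whereas \eqref{eq: AKN} requires $\lesssim\delta$.

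This is not an artifact of a badly chosen cut-off. For a general open $V$, the eigenfunction $\phi_1^V$ has no boundary regularity. So the cost of turning it into an admissible function for $B(x)$ can in general only be controlled through the size of the region where you modify it, i.e.\ to first order in $\mathcal A$. First order is genuinely too weak. For a degree-two spherical-harmonic deformation of the cap with amplitude $a$, one has $\mathcal A\approx a$, while $\delta\approx a^2$ and the squared eigenfunction distance is also $O(a^2)$. The second-order bound comes from the variational structure of the problem (for nearly spherical sets, the Fuglede expansion), not from a one-shot comparison.

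Consequently, the selection principle you describe, run with the measure asymmetry alone, proves only $\delta\gtrsim\mathcal A^2$, the Brasco--De Philippis--Velichkov-type inequality. It leaves out exactly the eigenfunction term that \cite{AKN21} adds. This paper needs that term at the linear rate: in \eqref{e:eigenfunction bound} the bound $\int|\phi_1-\phi_B|^2\lesssim\varepsilon$ is multiplied by $|\Delta\eta|\lesssim\varepsilon^{-2/3}$. A rate of $\varepsilon^{1/2}$ would not close the proof of Lemma~\ref{lem: no sets of pos capacity far from BAHP} as written.

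To repair this, the eigenfunction distance must be part of the asymmetry that the contradiction argument controls:
\begin{itemize}
\item The selection functional has to penalize (a regularized version of) $\mathcal H^{n-1}(U\Delta B)^2+\|u-\phi_1^{B}\|_{L^2}^2$, so that the selected objects still violate the \emph{strong} inequality.
\item The Fuglede step must then also show $\|\phi_1^{U}-\phi_1^{B}\|_{L^2}^2\lesssim Q(h)$ for nearly spherical $U$, after the rotation normalization.
\item Because this penalty depends on the state function and not only on the set, the selection problem is naturally posed over functions. The minimizers are then no longer first eigenfunctions of their positivity sets, and their Euler--Lagrange equation carries an extra term.
\item The Lipschitz bounds, nondegeneracy and flatness-improvement steps therefore have to be established for this perturbed problem. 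You cannot simply quote them from the standard one-phase theory.
\end{itemize}

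A related, smaller inaccuracy concerns the free boundary condition. Once any asymmetry is penalized, it is not $|\nabla u_k|=\mathrm{const}$; it reads $|\nabla u_k|^2=$ constant plus the first variation of the penalty. For the plain measure asymmetry, that variation jumps across $\partial B(x_k)$, which blocks the $C^{2,\gamma}$ upgrade you invoke. In the Euclidean case, Brasco--De Philippis--Velichkov avoid this by penalizing a weighted asymmetry whose density vanishes on the boundary of the ball.
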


We should clarify that when we refer to the \emph{center} of a spherical cap, we are thinking of that cap as a geodesic ball inside of the sphere and are referring to the center of the geodesic ball. If $x_0$ realizes the infimum above, we write $B_V:=B(x_0)$ and call it the \textit{best approximating spherical cap} 
for $V$. 

An easy corollary of Theorem \ref{t:AKN} and the standard facts about $\lambda_1$ (i.e. Lemma \ref{l:prelims}) is the following comparison between the $\lambda(x,r)$ and $\varepsilon_n(x,r)$. 

\begin{proposition}\label{p:FTV thrm c} Let $\Omega_1$ and $\Omega_2$ be open and disjoint subsets of $\mathbb{R}^n$ and define $\varepsilon_n, \lambda$ as above with respect to these domains. There exists a dimensional constant $C > 0$ such that for any $x\in \mathbb{R}^n$ and $r>0$,
\begin{equation*}
    \varepsilon_n(x,r)^2\leq  C\lambda(x,r).
\end{equation*}
\end{proposition}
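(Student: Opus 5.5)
By scaling we may take $r=1$. Write $V_i=V_i(x,1)$, $\nu_i=\mathcal H^{n-1}(V_i)$ and $\lambda=\lambda(x,1)$. Since $\varepsilon_n\le \sigma_n$ always, it suffices to treat the case $\lambda\le \delta$ for a small dimensional $\delta$. If $\lambda>\delta$, then $\varepsilon_n^2\le \sigma_n^2\le (\sigma_n^2/\delta)\lambda$ and we are done.

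\textbf{Step 1: volumes are nearly half.} We have $\lambda_1(B_{\nu_i})\le\lambda_1(V_i)\le n-1+\delta$. Taking $\delta<c_n$, Lemma \ref{l:prelims}(ii) rules out $\nu_i<\sigma_n/4$. Disjointness gives $\nu_1+\nu_2\le\sigma_n$, so both $\nu_i\le 3\sigma_n/4$, and thus $\nu_i\in[\sigma_n/4,3\sigma_n/4]$.

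\textbf{Step 2: control of $\nu_i$ from both sides.} Since $\lambda_1(B_{\nu_i})\le n-1+\lambda$, part (i) of the lemma gives $\nu_i\ge \sigma_n/2 - C\lambda$. Combined with $\nu_1+\nu_2\le\sigma_n$, this yields $|\nu_i-\sigma_n/2|\le C\lambda$. Moreover, part (iii) and monotonicity give
\[
0\le \lambda_1(V_i)-\lambda_1(B_{\nu_i})\le |\lambda_1(V_i)-(n-1)|+|\lambda_1(B_{\nu_i})-(n-1)|\le \lambda+C\lambda.
\]

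\textbf{Step 3: stability.} Theorem \ref{t:AKN}, applied with $\nu=\nu_i$, provides caps $B_i$ of volume $\nu_i$ such that $\mathcal H^{n-1}(V_i\Delta B_i)^2\le C\lambda$. Here the constant $c(n,\nu)$ must be uniform for $\nu\in[\sigma_n/4,3\sigma_n/4]$. This uniformity is the main obstacle. Either it follows from the proof in \cite{AKN21}, which gives constants continuous in $\nu$ away from $0$ and $\sigma_n$, or one avoids it. To avoid it, enlarge $V_i$ by a small ball-shaped set to reach volume exactly $\sigma_n/2$; this changes $\lambda_1$ by $O(\lambda)$. Then apply the theorem at $\nu=\sigma_n/2$, using the set $V_i'\supset V_i$ with $\lambda_1(V_i')\le\lambda_1(V_i)$ and the exact hemisphere bound.

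The simplest route is as follows. Let $H_i$ be a hemisphere with the same center as $B_i$. Then $\mathcal H^{n-1}(B_i\Delta H_i)=|\nu_i-\sigma_n/2|\le C\lambda\le C\lambda^{1/2}$, so $\mathcal H^{n-1}(V_i\Delta H_i)\le C\lambda^{1/2}$.

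\textbf{Step 4: the two hemispheres are nearly complementary.} The set $H_1\cap H_2$ lies in $(H_1\setminus V_1)\cup(H_2\setminus V_2)\cup (V_1\cap V_2)$, and $V_1\cap V_2=\emptyset$. Hence $\mathcal H^{n-1}(H_1\cap H_2)\le C\lambda^{1/2}$, so $\mathcal H^{n-1}(H_2\Delta H_1^c)\le C\lambda^{1/2}$.

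\textbf{Conclusion.} Take $H^+$ to be the half-space with $\partial B(x,1)\cap H^+=H_1$. The set in \eqref{e:defepsilonn} is contained in $(H_1\setminus V_1)\cup(H_1^c\setminus V_2)$. The second piece lies in $(H_1^c\setminus H_2)\cup(H_2\setminus V_2)$, so its measure is at most $C\lambda^{1/2}$, and the first piece is bounded by Step 3. Therefore $\varepsilon_n\le C\lambda^{1/2}$, and squaring gives $\varepsilon_n(x,r)^2\le C\lambda(x,r)$.
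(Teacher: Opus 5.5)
Your proof is correct, but it is organized differently from the paper's.

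\textbf{The paper's route.} It applies Theorem~\ref{t:AKN} only once, to the domain of smaller volume. Assuming $\nu_1\le\sigma_n/2$ gives $\lambda_1(B_{\nu_1})\ge n-1$, so the deficit $\lambda_1(V_1)-(n-1)$ splits into two nonnegative pieces. These control $\mathcal H^{n-1}(V_1\Delta B_1)^2$ and $\mathcal H^{n-1}(B_1\Delta H_1)$. The paper then transfers this to $V_2$ through disjointness, using $\mathcal H^{n-1}(V_2\Delta H_1^c)\le \mathcal H^{n-1}(V_1\Delta H_1)+(\sigma_n-\nu_1-\nu_2)$. Finally it bounds the gap $\sigma_n-\nu_1-\nu_2$ by $\lambda$ through a two-case analysis.

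\textbf{Your route.} You first prove the two-sided bound $|\nu_i-\sigma_n/2|\le C\lambda$ for both $i$. This subsumes the paper's case analysis, since summing your lower bounds gives $\sigma_n-\nu_1-\nu_2\le C\lambda$ at once. You use it to control $\lambda_1(V_i)-\lambda_1(B_{\nu_i})$ even when $\nu_i>\sigma_n/2$, apply stability to both domains symmetrically, and then need Step~4 to show $H_2$ is nearly $H_1^c$.

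\textbf{Trade-off.} The paper's argument is more economical: one stability application, only at volumes $\le\sigma_n/2$. Yours is symmetric and avoids the case split, but it needs the stability constant on a neighborhood of $\sigma_n/2$ that includes volumes above half.

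\textbf{Small remarks.}
\begin{itemize}
\item The quantitative bound $\nu_i\ge\sigma_n/2-C\lambda$ comes from Lemma~\ref{l:prelims}(iii) combined with monotonicity, not from (i) alone.
\item Your Step~1 rightly uses disjointness to get $\nu_i\le 3\sigma_n/4$. Faber--Krahn only bounds $\lambda_1(V_i)$ from below, so part (ii) by itself cannot exclude a large-volume $V_i$ with $\lambda_1(V_i)\approx n-1$.
\item On uniformity of $c(n,\nu)$: the paper also uses it implicitly, so your main line is at the same level of rigor.
\item Your enlargement workaround only applies to a domain with $\nu_i\le\sigma_n/2$; you cannot shrink the larger one without losing control of $\lambda_1$. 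Apply the enlargement to the smaller domain, getting $H_1$ with $\mathcal H^{n-1}(V_1\Delta H_1)\le C\lambda^{1/2}$ from Theorem~\ref{t:AKN} at $\nu=\sigma_n/2$ exactly. Then handle $V_2$ by the paper's complement transfer, which holds for any hemisphere $H_1$. This gives a proof that never needs uniformity of the stability constant in $\nu$, which is slightly better than what the paper writes.
\end{itemize}
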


This result should be compared with \cite[Theorem C]{FTV24}, which proves a similar comparison with the $a(x,r)$-function and a one-parameter family of $\varepsilon_n$-type functions. In fact, the proof of Proposition \ref{p:FTV thrm c} also follows directly from \eqref{e:comparativelysmall} and  \cite[Theorem C]{FTV24}, but we provide an independent proof for completeness. 

\begin{proof}
    First we can assume that there is some $\delta_0 > 0$ depending only on dimension such that $\lambda(x,r) \leq \delta_0$ (otherwise the inequality is trivial since $\varepsilon_n(x,r) \leq \sigma_n$). 
    
    Since everything is scale invariant we can let $x= 0 $ and $r = 1$. Adopting the notation from above let $V_i = \Omega_i \cap \mathbb S^{n-1}$ and $\nu_i = \mathcal H^{n-1}(V_i)$. Let $B_i$ be the best approximating spherical cap for $V_i$ and let $H_i$ be a hemisphere
    with the same center as $B_i$.  Taking $\delta_0$ small enough, from Lemma \ref{l:prelims} (ii) it follows that $\frac{1}{4}\sigma_n\leq \nu_i\leq \frac{3}{4}\sigma_n$, for $i=1,2$.

    Let us assume, without loss of generality, that $\frac{1}{4}\sigma_n\leq \nu_1 \leq \frac{1}{2}\sigma_n$, which implies that $\lambda_1(V_1) \geq \lambda_1(B_1) \geq n-1$. We then estimate, using Theorem \ref{t:AKN} and Lemma \ref{l:prelims}, 
    \begin{equation}\label{e:lambdacontrolsall}\begin{aligned}
    \lambda_1(V_1) - (n-1) &\geq \lambda_1(V_1) - \lambda_1(B_1) + \lambda_1(B_1) - (n-1)\\
    &\geq c\left(\mathcal{H}^{n-1}(V_1\Delta B_1)^2 + |\nu_1-\frac{1}{2}\sigma_n|\right) \\ &\geq \tilde{c}\left(\mathcal{H}^{n-1}(V_1\Delta B_1)^2 + \mathcal{H}^{n-1}(B_1\Delta H_1)^2\right).
    \end{aligned}
    \end{equation} 
   Here we have used two facts: $\sigma_n\mathcal{H}^{n-1}(B\Delta H) \geq \mathcal{H}^{n-1}(B\Delta H)^2$ for any spherical cap $B$ and hemisphere $H$, and $|\nu_1-\frac{1}{2}\sigma_n| = \mathcal{H}^{n-1}(B_1\Delta H_1)$. From \eqref{e:lambdacontrolsall} and the triangle inequality we have, \begin{equation}\label{e:lambdaclosetohemi}\lambda_1(V_1) - (n-1) \geq c\mathcal{H}^{n-1}(V_1\Delta H_1)^2.\end{equation}

    Moreover, since $V_1\subset V_2^c$ and $V_2\subset V_1^c$,
    $$
    \mathcal{H}^{n-1}(V_2 \Delta H_1^c) \leq \mathcal{H}^{n-1}(V_1\Delta H_1) + \mathcal{H}^{n-1}(H_1^c\setminus (V_1\cup V_2))\leq \mathcal{H}^{n-1}(V_1\Delta H_1)+\sigma_n - \nu_1 - \nu_2.
    $$
    
    Since we have,
    \begin{equation}\label{e:intermediate}
        \varepsilon_n(0,1)^2 \lesssim \mathcal{H}^{n-1}(V_1\Delta H_1)^2+\mathcal{H}^{n-1}(V_2\Delta H_1^c)^2\lesssim \lambda_1(V_1)-(n-1)+(\sigma_n-\nu_1-\nu_2)^2,
    \end{equation}
    we are done once we show that $\lambda(0,1) \gtrsim (\sigma_n-\nu_1-\nu_2)^2$. To this end, let us consider two cases. If $\nu_2\leq \frac{1}{2}\sigma_n$, then as in \eqref{e:lambdacontrolsall} we have 
    $$
    \lambda_1(V_2)-(n-1)\geq \frac{1}{2}\sigma_n-\nu_2,
    $$
    and thus 
    $$\lambda(0,1) = \lambda_1(V_1) - (n-1) + \lambda_1(V_2) - (n-1) \geq \sigma_n - (\nu_1 + \nu_2).
    $$

    If $\nu_2\geq \frac{1}{2}\sigma_n$ then,
    \begin{align*}
        \nu_1+\nu_2&\geq \frac{1}{2}\sigma_n-\lambda_1(V_1)+(n-1)+\frac{1}{2}\sigma_n\geq \sigma_n-\lambda(0,1).
    \end{align*}
    Thus, in either case
    $\sigma_n - \nu_1 - \nu_2\leq \lambda(0,1)$.

    Thus, applying this estimate in \eqref{e:intermediate} finishes the proof.

\end{proof}

We end this subsection with two examples promised in the introduction:

\begin{example}\label{ex:lambdavsepsilon}[$\lambda$ is more sensitive than $\varepsilon_n$]
    As mentioned above, when $n =2$ we see that $\lambda(x,r)\approx \varepsilon(x,r)$ (as long as both are small).  Furthermore, since $\varepsilon_n^2(x,r) \lesssim \lambda(x,r)$ (see Proposition \ref{p:FTV thrm c}, and also \cite[Theorem C]{FTV24}). The purpose of this example is to give a domain $\Omega \subset \mathbb R^3$ such that $0\in \partial \Omega$ with $\varepsilon_3(0, r) \leq Cr^\alpha$ but $\lambda(0, r)$ is not Dini-summable. 

    Indeed let $$\Omega_1 = \{(x,y,z)\in \mathbb R^3\mid z > 0\}\setminus\{(x,0,z)\mid z \geq |x| \}.$$ and let $\Omega_2$ be the reflection of $\Omega_1$ across the plane $\{z = 0\}$.  Note that both $\Omega_i$ are connected.

 We see that $\partial B(0,r)\cap \Omega_1$ is the upper-half hemisphere minus the curve $$\gamma_r := \{(t, 0, \sqrt{r^2-t^2})\mid t\in [-r/\sqrt{2}, r/\sqrt{2}]\}.$$ Since $\mathcal H^2(\gamma_r) = 0$ we have that $\varepsilon_3(0, r) \equiv 0$ for all $r > 0$. On the other hand,  using Lemma \ref{lem: relative cap to cap} and Theorem \ref{thrm: from BC06} we can see that $\lambda_1(\partial B(0,r)\cap \Omega_1) \geq n-1 + c$ for some constant $c > 0$. Thus $\lambda(0,r) \geq 1$ for all $r > 0$.

 It is important to note that this example does not rule out a version of Theorem \ref{thrm:main theorem} with H\"older decay on the $\varepsilon_n$ (as opposed to the $\lambda(x,r)$). Indeed, even though $\Omega_1$ is not a $C^{1,\alpha}$-domain it is also the case that $\varepsilon_3((0,0,z),r) \gtrsim 1$ for any $r < |z|$.
\end{example}

Our last example shows that decay of $\lambda$ at a single point cannot imply decay of the spherical $\beta$s at that point:

\begin{example}\label{ex:lambdavsbeta}
Let $\Omega_1 \subset \mathbb R^3$ with $\Omega_1 = \{(x,y,z)\mid z > 0\} \setminus 
\{(0,0,z):z\in \mathbb{R}\}$. Let $\Omega_2$ be the reflection of $\Omega_1$ over the plane $\{z=0\}$. Then $\lambda(0, r) \equiv 0$ because $\Omega_i \cap \partial B(0,r)$ is a hemisphere minus the north (or south) pole (which has zero capacity). On the other hand $b\beta^s_{\partial \Omega_i}(0, r) \simeq \sqrt{2}$ because the pole is far from the rest of the boundary which is at the equator. 
\end{example}

\subsection{Capacity}\label{ss:capacity}
A key tool in our arguments will be the notion of relative capacity. For $\Omega\subset\mathbb{S}^{n-1}$, and $A\Subset \Omega$ we define \textbf{the capacity of $A$ relative to $\Omega$} as
\begin{equation*}
    \mathrm{Cap}_{\Omega}(A)=\inf \left\{\int_{\mathbb{S}^{n-1}}|\nabla u|^2d\mathcal{H}^{n-1}(x)\mid \,
    \,u- \phi^\Omega_1 \in C_c^\infty(\Omega \backslash \overline{A})\right\}, 
\end{equation*}
where $\phi_1^{\Omega}$ is the first eigenfunction of the Laplace-Beltrami operator on $\Omega\subset \mathbb{S}^{n-1}$. See \eqref{e:eigenfunction}. 
\vspace{\baselineskip}

This definition of relative capacity can be found in \cite{BC06}. We remark that there is a well known relationship between Sobolev type capacities (like the ones above) and the integral capacities used by \cite{FTV23} (see \cite{Ma99}). For our purposes it is easier to work with Sobolev capacities.

One key property of capacity is that ``nice enough" $(n-2)$-dimensional sets (in $\mathbb S^{n-1}$) have positive relative capacity. In particular, when $\Omega$ is smooth and $A$ is a subset of a $(n-2)$-rectifiable set  with $\mathrm{dist}_H(A, \Omega) \geq \varepsilon$, then $\mathrm{Cap}_{\Omega}(A)$ is proportional to some (power of) $\varepsilon$ times the measure of $A$. Here $\mathrm{dist}_H(U,V)$ denotes the Hausdorff distance between $U$ and $V$.  
We prove this in the special case where $\Omega = H$, a hemisphere.

\begin{lemma}\label{lem: relative cap to cap}
    Let $H\subset \mathbb{S}^{n-1}$ denote a hemisphere. Let $A$ be a subset of $\Sigma$, where $\Sigma$ is a smooth $(n-2)$-hypersurface such that $d(\Sigma,\partial H)>\varepsilon$. Now assume that there exist $\rho \ll \varepsilon$ (where how much less only depends on $n$) and point $x_0\in A$ such that $\mathcal H^{n-2}(A\cap B(x_0, \rho)) \geq \theta \rho^{n-2}$ for some $\theta \in (0,1)$. Then there exists a constant $C = C(n, \theta) > 0$ such that 
    \begin{equation}\label{e: relative cap to cap}
       \varepsilon^2 \rho^{n-1}\leq C\mathrm{Cap}_{H}(A).
    \end{equation}

\end{lemma}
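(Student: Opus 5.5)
We have to bound $\mathrm{Cap}_H(A)$ from below. Here $\mathrm{Cap}_H(A)$ is the infimum of $\int|\nabla u|^2$ over $u$ with $u-\phi_1^H\in C_c^\infty(H\setminus\overline A)$. Such a $u$ equals $\phi_1^H$ near $\partial H$ and vanishes near $A$. The plan is to localize to a small ball around $x_0$, where $\phi_1^H$ is bounded below by a multiple of $\varepsilon$, and then use the classical fact that a piece of an $(n-2)$-dimensional set of size $\rho$ has Newtonian capacity $\gtrsim\rho^{n-3}$ (in $\mathbb{S}^{n-1}$, an $(n-1)$-manifold). Combining these should give $\varepsilon^2\rho^{n-3}$, which is at least as strong as the claimed $\varepsilon^2\rho^{n-1}$ since $\rho\le 1$.

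\textbf{Step 1 (lower bound on $\phi_1^H$ near $x_0$).} For a hemisphere $H=\{x_n>0\}$ we have $\phi_1^H(x)=c_n x_n$, and $x_n\approx \mathrm{dist}(x,\partial H)$. Since $d(\Sigma,\partial H)>\varepsilon$ and $\rho\ll\varepsilon$, every point $y\in B(x_0,2\rho)$ satisfies $\phi_1^H(y)\ge c\varepsilon$.

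\textbf{Step 2 (cutoff and reduction to a condenser).} Fix a competitor $u$ and a cutoff $\eta$ supported in $B(x_0,2\rho)$ with $\eta\equiv1$ on $B(x_0,\rho)$. Rather than using $\eta$ directly, I would argue by a trace or Poincaré-type inequality along the lines of Step 3. The key observation is that the function $w=1-u/(c\varepsilon)$ equals $1$ on a neighborhood of $A$ (where $u=0$). It is not obviously small away from $A$, however, so I plan to split into two cases based on the average of $u$ on the annulus $B(x_0,2\rho)\setminus B(x_0,\rho)$.

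\textbf{Case (a): $u$ has average $\ge c\varepsilon/2$ on the annulus.} Then $v=\eta\cdot\bigl(1-\min(u,c\varepsilon/2)/(c\varepsilon/2)\bigr)$, suitably modified, is comparable to a capacitary test function for the condenser $(A\cap B(x_0,\rho),B(x_0,2\rho))$. The gradient coming from $\eta$ is controlled by the Poincaré inequality on the annulus. This yields
\[
\int|\nabla u|^2\gtrsim \varepsilon^2\,\mathrm{cap}\bigl(A\cap B(x_0,\rho),B(x_0,2\rho)\bigr).
\]
\textbf{Case (b): $u$ has average $<c\varepsilon/2$ on the annulus.} By Step 1, $\phi_1^H\ge c\varepsilon$ there. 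Since $u=\phi_1^H$ near $\partial H$, the function $u$ must climb by $\gtrsim\varepsilon$ between the annulus and $\partial H$. A Poincaré/trace argument along geodesic segments from the annulus to the equator then gives $\int|\nabla u|^2\gtrsim\varepsilon^2\rho^{n-1}$. This is exactly the stated power, which is presumably why the lemma is phrased this way.

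\textbf{Step 3 (capacity of the rectifiable piece).} It remains to show that $\mathrm{cap}(A\cap B(x_0,\rho),B(x_0,2\rho))\gtrsim_\theta \rho^{n-3}\ge\rho^{n-1}$. I would use the dual characterization: the measure $\mu=\mathcal H^{n-2}|_{A\cap B(x_0,\rho)}$ has mass $\ge\theta\rho^{n-2}$ and satisfies the growth bound $\mu(B(y,s))\lesssim s^{n-2}$, because $\Sigma$ is smooth and, after shrinking $\rho$, is a nearly flat graph at scale $\rho$. Its Green energy in $B(x_0,2\rho)$ is therefore $\lesssim \mu(\text{total})\cdot\rho$. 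The capacity is then at least $\mu(\text{total})^2/\text{energy}\gtrsim\theta\rho^{n-3}$.

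\textbf{Main obstacle.} The hardest part is Step 2. Case (a) requires the cutoff error to be genuinely absorbed, and Case (b) requires a clean lower bound. I expect to handle both by applying a single Poincaré inequality on $B(x_0,2\rho)$ together with the fact that $u-\phi_1^H$ has compact support in $H$. The constants depend on $\theta$ only through Step 3.
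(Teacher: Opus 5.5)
Your reading of the boundary condition is wrong, and this breaks both cases of Step~2. A competitor satisfies $u-\phi_1^H\in C_c^\infty(H\setminus\overline A)$, so $u$ agrees with $\phi_1^H$ outside a compact subset of $H\setminus\overline A$. Thus $u=\phi_1^H$ near $\partial H$, where $\phi_1^H\to 0$, \emph{and} $u=\phi_1^H\ge c\varepsilon$ near $\overline A$. It does not vanish near $A$; compare the paper's proof of the Bertrand--Colbois bound, where the competitors are $\phi_1\pm u$ with $u\in C_c^\infty(\Omega\setminus\overline A)$. With the correct data, your Case (a) function $\eta\bigl(1-\min(u,c\varepsilon/2)/(c\varepsilon/2)\bigr)$ is $0$, not $1$, on $A$, so it is not a condenser test function. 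The argument in Case (b) fails under either reading. Since $u=\phi_1^H\approx 0$ near $\partial H$, nothing forces $u$ to climb by $\gtrsim\varepsilon$ between the annulus and the equator. In fact, with your boundary data the infimum would be $0$. Take a cutoff $\chi$ with $\chi=1$ on $[0,1]$, $\chi=0$ on $[2,\infty)$, and $\delta\ll\varepsilon$. Then $u_\delta=\phi_1^H\,\chi(x_n/\delta)$ equals $\phi_1^H$ near $\partial H$, vanishes near $\Sigma$, falls in your Case (b), and has $\int|\nabla u_\delta|^2\lesssim\delta$.

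The repair is to swap the two mechanisms, which is exactly the paper's argument. Suppose $u$ is not much smaller than $\phi_1^H$ on $B=B(x_0,\rho)$; the paper's condition is $\int_B(\phi_1^H-u)_+^2\le\frac14\int_B(\phi_1^H)^2$. Then use that $u\in H^1_0(H)$ is a competitor for $\lambda_1(H)=n-1$: $\int|\nabla u|^2\ge(n-1)\int_B u^2\gtrsim\int_B(\phi_1^H)^2\gtrsim\varepsilon^2\rho^{n-1}$. This spectral bound, not a climb to the equator, is where the power $\rho^{n-1}$ comes from. Otherwise $v=(\phi_1^H-u)_+$ vanishes near $A$ and carries a fixed fraction of $\int_B(\phi_1^H)^2$. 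A Poincar\'e inequality for functions vanishing on $A\cap B$, namely $\int_B v^2\lesssim_\theta\rho^2\int_B|\nabla v|^2$, then gives $\rho^2\int_B|\nabla u|^2+\rho^{n+1}\gtrsim\varepsilon^2\rho^{n-1}$. Absorbing the $\nabla\phi_1^H$ term via $\rho\ll\varepsilon$ leaves $\int_B|\nabla u|^2\gtrsim\varepsilon^2\rho^{n-3}$. Your Step~3 Frostman estimate, fed into a Maz'ya-type capacitary Poincar\'e inequality, is a fine justification of that inequality (the paper cites Ziemer instead). Your own split by the average of $u$ over $B$ also works once the roles are swapped: use $(c\varepsilon-u)_+$, which likewise vanishes near $A$ and avoids the absorption step.

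Finally, do not expect $\varepsilon^2\rho^{n-3}$ in general. Take $n=3$, $A=\Sigma\cap\overline{B(x_0,\rho)}$ and $\mathrm{dist}(x_0,\partial H)\approx 2\varepsilon$. A logarithmic cutoff around $x_0$ gives a competitor with energy $\lesssim\varepsilon^2/\log(\varepsilon/\rho)$. So no bound of order $\varepsilon^2\rho^{n-3}=\varepsilon^2$ holds uniformly in $\rho\ll\varepsilon$.
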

\begin{proof}
Suppose for convenience that $H=\{x_{n}>0\}\cap \mathbb{S}^{n-1}$. Let $\phi_1:=\phi_1^H$ denote the first eigenfunction of the Laplace-Beltrami operator on $H$ with Dirichlet boundary conditions, and observe that $\phi_1 = c_n x_n^+$. Let $u$ be any function such that $u-\phi_1 \in C_c^\infty(H\backslash \overline{A})$ and consider $v := \max\{\phi_1 - u, 0\}$. Finally let $B = B(x_0, \rho)$ as in the Lemma statement. We have two cases: 

\noindent {\bf Case 1:} We assume that $\int_{B} v^2 \leq \frac{1}{4}\int_B \phi_1^2$. From this assumption and the reverse triangle inequality,
$$
\int_{B\cap \{\phi_1 > u\}} u^2 \geq \frac{1}{2}\int_{B\cap \{\phi_1 > u\}} \phi_1^2 - \frac{1}{4}\int_{B}\phi_1^2.
$$ 
We now apply the estimate above to estimate $\int_B u^2$, 
$$
\int_B u^2\geq \int_{B\cap \{\phi_1 > u\}} u^2 +\int_{B\setminus \{\phi_1 > u\}} \phi_1^2 \geq 
\frac{1}{4}\int_B \phi_1^2,
$$
and using our precise description of $\phi_1 = c_n x_n^+$ we have that
$$
\int_B u^2\geq C\varepsilon^2\rho^{n-1},
$$ 
where $C$ depends only on the dimension. Note that $u$ is a competitor for $\lambda_1(H)$ and so $$\int_{\mathbb{S}^{n-1}}|\nabla u|^2 \geq (n-1)\int_{\mathbb{S}^{n-1}} u^2.$$ Putting everything together and taking an infimum over $u$ we have the desired lower bound in this case. 

\medskip

\noindent {\bf Case 2:} We now assume the opposite, that $\int_B v^2 \geq \frac{1}{4}\int_B \phi_1^2$. Since $\Sigma$ is a smooth (rectifiable would have been sufficient) and $v$ vanishes on $A \subset \Sigma$ we can apply the Poincar\'e inequality (see, e.g. \cite[Theorem 3.11.1]{Ziemer}) and get that there exists a constant $C > 0$ which depends on $\theta,n$ such that  
$$
C\rho^2\int_B |\nabla v|^2 \geq \int_B v^2.
$$
Using the triangle inequality and the lower bound for $\int_B v^2$ we have that 
$$C\rho^2 \int_{B\cap \{\phi_1>u\}} |\nabla u|^2 + C\rho^2 \int_{B\cap \{\phi_1>u\}} |\nabla \phi_1|^2 \geq C\rho^2 \int_B |\nabla v|^2 \geq \int_B v^2 \geq \frac{1}{4}\int_B \phi_1^2\approx c\rho^{n-1}\varepsilon^2.
$$ 
However we know that $|\nabla \phi_1| \approx c_n$, so, 
$$  C\rho^2 \int_{B\cap \{\phi_1>u\}} |\nabla u|^2 +  C\rho^{n+1} \geq c\rho^{n-1}\varepsilon^2 \quad \text{which implies}\quad  C\int_B |\nabla u|^2 \geq \rho^{n-3}\varepsilon^2.$$ 
This bound is much larger than $\rho^{n-1}\varepsilon^2$, since $\rho \ll 1$, so we are done (after taking an infimum over $u$). 
 
\end{proof}

We end this section with the following useful theorem, which allows us to bound relative capacity by a drop in principle eigenvalues. This theorem is due to to Bertrand and Colbois, \cite{BC06}, in $\mathbb R^n$. The proof for smooth subsets of the sphere is identical, but we give it here to make the paper self contained.

\begin{theorem}[Theorem 2.2, \cite{BC06}]\label{thrm: from BC06} Let $\Omega\subset \mathbb{S}^{n-1}$ be an open subset with smooth boundary. Let $\overline{A}\subset \Omega$. Then 
\begin{equation*}
    \mathrm{Cap}_\Omega(A)\leq \kappa_\Omega\left(\lambda_1(\Omega\setminus \overline{A})-\lambda_1(\Omega) \right),
\end{equation*}
    where $\kappa_\Omega$ can be computed as follows 
    \begin{equation*}
        \kappa_\Omega=\frac{\lambda_1(\Omega)+\lambda_2(\Omega)}{\lambda_2(\Omega)-\lambda_1(\Omega)}.
    \end{equation*}
    Recall that $\lambda_i(S)$ is the $i^{th}$ Dirichlet eigenvalue of the Laplace-Beltrami operator on $S\subset \mathbb{S}^{n-1}$. 
\end{theorem}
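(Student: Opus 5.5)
We follow the variational argument of Bertrand and Colbois, using the first eigenfunction of the perforated domain as a test object. Write $\lambda_1 = \lambda_1(\Omega)$, $\lambda_2 = \lambda_2(\Omega)$, $\phi = \phi_1^\Omega$ normalized with $\|\phi\|_{L^2} = 1$. Let $\psi$ be the first Dirichlet eigenfunction of $\Omega\setminus\overline{A}$, with $\|\psi\|_{L^2}=1$ and $\psi \ge 0$, and set $\mu = \lambda_1(\Omega\setminus\overline{A}) \geq \lambda_1$. If $\mu \geq \lambda_2$ there is still something to prove, so we do not discard that case. The goal is a competitor $u$ with $u - \phi$ supported (after approximation) in $\Omega\setminus\overline{A}$ and $\int|\nabla u|^2 \le \kappa_\Omega(\mu-\lambda_1)$.

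The natural competitor is $u = \phi - t\psi$ for a well-chosen scalar $t$. Since $\psi \in H^1_0(\Omega\setminus\overline A)$, the difference $u-\phi = -t\psi$ vanishes near $\overline A$ in the capacitary sense, and by density of $C_c^\infty$ in $H^1_0$ it is an admissible competitor (after an approximation step). Integrating by parts gives
\[
\int \nabla\phi\cdot\nabla\psi = \lambda_1\int\phi\psi .
\]
Write $a = \int\phi\psi \in [0,1]$. Then
\[
\int|\nabla u|^2 = \lambda_1 - 2t\lambda_1 a + t^2\mu .
\]
The right choice seems to be $t = 1/a$ or $t=a$; instead we take $t = 1$ and compute
\[
\int|\nabla(\phi-\psi)|^2 = \lambda_1 + \mu - 2\lambda_1 a .
\]
This is small only when $a$ is close to $1$. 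That closeness will follow from a spectral-gap estimate.

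\textbf{Spectral gap step.} Decompose $\psi = a\phi + w$ with $w \perp \phi$ in $L^2(\Omega)$. Since $\psi\in H^1_0(\Omega)$ and $w$ is orthogonal to the first eigenfunction, we have $\int|\nabla w|^2 \ge \lambda_2\int w^2$. The cross term $\int \nabla\phi\cdot\nabla w$ equals $\lambda_1 \int \phi w = 0$. Hence
\[
\mu = \int|\nabla\psi|^2 = a^2\lambda_1 + \int|\nabla w|^2 \ge a^2\lambda_1 + \lambda_2(1-a^2).
\]
This yields
\[
1-a^2 \le \frac{\mu-\lambda_1}{\lambda_2-\lambda_1}.
\]
Substituting into the energy identity, and using $1-a\le 1-a^2$ because $a\in[0,1]$:
\[
\int|\nabla(\phi-\psi)|^2 = (\mu-\lambda_1) + 2\lambda_1(1-a) \le (\mu-\lambda_1)\left(1 + \frac{2\lambda_1}{\lambda_2-\lambda_1}\right) = \frac{\lambda_1+\lambda_2}{\lambda_2-\lambda_1}(\mu-\lambda_1),
\]
which is exactly $\kappa_\Omega(\mu-\lambda_1)$.

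\textbf{Admissibility of the competitor.} The competitor in the capacity must satisfy $u - \phi\in C_c^\infty(\Omega\setminus\overline A)$; we have $u-\phi = -\psi \in H^1_0(\Omega\setminus\overline A)$ instead. We expect this to be the main technical point, though it is routine. We approximate $\psi$ in $H^1$ by $\psi_k\in C_c^\infty(\Omega\setminus\overline A)$ and pass to the limit in $\int|\nabla(\phi-\psi_k)|^2$. This convergence gives the infimum bound. Smoothness of $\partial\Omega$ ensures that $\phi$ and $\lambda_2>\lambda_1$ behave as in the Euclidean case. Here $\lambda_2>\lambda_1$ holds when $\Omega$ is connected, and is implicit since $\kappa_\Omega$ would otherwise be infinite. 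The positivity $a\ge0$ uses $\phi,\psi\ge 0$. Nothing specific to $\mathbb{R}^n$ is used, only integration by parts on $\mathbb{S}^{n-1}$ and the min–max characterization of $\lambda_2$.
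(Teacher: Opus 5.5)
Your argument is correct and is essentially the paper's proof. The paper takes an arbitrary $u\in C_c^\infty(\Omega\setminus\overline{A})$ with $\|u\|_{L^2}=1$ and uses the competitor $\phi_1\pm u$, with the same spectral-gap bound $1-c_u^2\le (\int|\nabla u|^2-\lambda_1)/(\lambda_2-\lambda_1)$ and $1-|c_u|\le 1-c_u^2$, before taking the infimum over $u$. Your use of the actual eigenfunction $\psi\ge 0$ of $\Omega\setminus\overline{A}$ replaces the paper's sign choice by positivity, at the cost of needing existence of $\psi$ and the $H^1$-approximation step — a purely cosmetic difference.
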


\begin{proof}
    Let $u \in C_c^\infty(\Omega \backslash \overline{A})$ with $\|u\|_{L^2} = 1$. Since $\Omega$ has smooth boundary, classical spectral theory applies. In particular, the Dirichlet eigenfunctions are an orthonormal basis of $H^1_0(\Omega)$. Since $u\in H^1_0(\Omega)$, we can write $u = c_u\phi_1 + \psi$, where $\phi_1$ is the first Dirichlet eigenfunction of $\Omega$ and $\psi \in H^1_0(\Omega)$, with $\int_{\mathbb{S}^{n-1}}\psi \phi_1=0$. Note this is a consequence of projection in the Hilbert space $H^1_0(\Omega)$, and in particular we are making no claims about the properties of $\psi$, besides its orthogonality to $\phi_1$.  

    Finally, $\phi_1\pm u$ are both competitors for the relative capacity, so we can write 
    $$
    \mathrm{Cap}_\Omega(A) \leq \int |\nabla (\phi_1\pm u)|^2 = \int |\nabla u|^2 + \lambda_1(\Omega) \pm 2\int \nabla u \nabla \phi_1.
    $$
    Moreover, integrating by parts yields
    $$
    2\int \nabla u \nabla \phi_1 = 2\lambda_1(\Omega)\int u \phi_1,
    $$ 
    where we have used that $u\in C^\infty_c(\Omega \backslash \overline{A})$. 

    Using the decomposition of $u$, we have that $\int u \phi_1 = c_u$. Thus we have shown that $$\mathrm{Cap}_\Omega(A) \leq \int |\nabla u|^2+\lambda_1(\Omega) \pm 2c_u\lambda_1(\Omega) \leq (\int |\nabla u|^2 - \lambda_1(\Omega)) + 2(1\pm c_u)\lambda_1(\Omega).$$

 Since $\int_{\mathbb{S}^{n-1}}u^2=1$ it follows that $c_A^2 + \int \psi^2 = 1$, and that $$ \int |\nabla u|^2 \geq  c_A^2 \lambda_1(\Omega) + (1-c^2_A)\lambda_2(\Omega)$$  (since if $\psi$ is orthogonal to $\phi_1$ we have $\int |\nabla \psi|^2 \geq \lambda_2(\Omega)\int \psi^2$). Thus $$\frac{\int |\nabla u|^2-\lambda_1(\Omega)}{\lambda_2(\Omega) - \lambda_1(\Omega)} \geq (1-c_u^2) \geq (1-|c_u|).$$ 

 We now choose the sign above so that $\pm c_A = -|c_A|$. Plugging that into the above we get that $$\mathrm{Cap}_{\Omega}(A) \leq \left(\frac{2\lambda_1(\Omega)}{\lambda_2(\Omega) - \lambda_1(\Omega)} + 1\right)\left(\int |\nabla u|^2 - \lambda_1(\Omega)\right).$$ Taking the infimum over all such $u$ we get the desired result.  
\end{proof}

\section{The Analytic Argument}\label{s:analytic}
The goal of this section is to show that if $\lambda(x,r)$ decays at a power rate, then so do the spherical beta numbers, $b\beta^s(x,r)$. The first step is to show that on a spherical shell where the $\lambda$-function is small, there exists a hemisphere which well approximates $\Omega$ in the sense of capacity. Recall, capacity is more sensitive than measure, but still does not imply $L^\infty$ closeness. We reiterate that this estimate feels similar in spirit to \cite[Theorems A, B]{FTV24}, which shows that the eigenvalue deficit controls integral quantities related to capacity and Hausdorff content. 

The first lemma we prove to this end, says that if $H$ is a good approximating hemisphere for a set $\Omega\subset \mathbb{S}^{n-1}$, then $\overline{\Omega^c}\cap H$ has small capacity away from a neighborhood of $\partial H$.

 \begin{lemma}\label{lem: no sets of pos capacity far from BAHP}
There exists $0<\varepsilon_0\ll 1$ depending on dimension, so that for any $\varepsilon<\varepsilon_0$ the following holds. If $\Omega \subset \mathbb S^{n-1}$ is an open set with $|\lambda_1(\Omega)-(n-1)|<\varepsilon$ and $|\mathcal{H}^{n-1}(\Omega) - \frac{1}{2}\sigma_n|^2 < \varepsilon$, 
then there exists a constant $C:=C(n) > 0$ and a hemisphere $H$ such that 
\begin{equation}\label{e:nolargecap}
\mathrm{Cap}_H( \overline{\Omega^c}\cap  H_{\varepsilon^{\beta}})< C\varepsilon^{1/3}, 
\end{equation}
for any $\beta<1/12$. 
 Here we use the notation $H_r := \{x\in H\mid \mathrm{dist}(x, \partial H) \geq r\}$.

\end{lemma}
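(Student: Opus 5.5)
We take $H$ to be the hemisphere with the same center as the best approximating cap $B_\Omega$ from Theorem \ref{t:AKN}. The goal is to estimate $\mathrm{Cap}_H(K)$, with $K := \overline{\Omega^c}\cap H_{\varepsilon^\beta}$, through Theorem \ref{thrm: from BC06} applied in $H$, where $\kappa_H$ is a dimensional constant. The plan is to build a test function supported in $H\setminus K$ whose Rayleigh quotient is close to $n-1$, or to compare the capacity potential with $\phi_1^\Omega$ directly. The natural candidate is $\phi_1^\Omega$ itself: it lies in $H^1_0(\Omega)$, so it vanishes on $\overline{\Omega^c}\supset K$, its Rayleigh quotient is $\lambda_1(\Omega) \le n-1+\varepsilon$, and by Theorem \ref{t:AKN} it is $L^2$-close to $\phi_1^{B_\Omega}$. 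The one defect is that it is not supported in $H$.

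\textbf{Step 1 (closeness).} Combine Theorem \ref{t:AKN} with Lemma \ref{l:prelims}(iii) and the volume hypothesis, exactly as in \eqref{e:lambdacontrolsall}. This gives $\lambda_1(\Omega) - \lambda_1(B_\nu) \lesssim \varepsilon + |\nu - \sigma_n/2| \lesssim \varepsilon^{1/2}$, and therefore
\[
\|\phi_1^\Omega - \phi_1^{B_\Omega}\|_{L^2}^2 \lesssim \varepsilon^{1/2}.
\]
The cap $B_\Omega$ differs from $H$ by a band of width $\lesssim \varepsilon^{1/2}$, so $\phi_1^{B_\Omega}$ and $\phi_1^H=c_nx_n^+$ are $O(\varepsilon^{1/2})$-close in $H^1$. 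Hence $\|\phi_1^\Omega-\phi_1^H\|_{L^2}\lesssim \varepsilon^{1/4}$. The gradients are also close: both Rayleigh quotients are within $\varepsilon^{1/2}$ of $n-1$, so expanding $\int|\nabla(\phi_1^\Omega - \phi_1^H)|^2$ and integrating by parts against $\phi_1^H$ yields an $H^1$ bound of order $\varepsilon^{1/4}$, possibly after a boundary term which is handled in Step 2.

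\textbf{Step 2 (localize to $H$).} Fix a cutoff $\eta$ with $\eta = 1$ on $H_{\varepsilon^\beta}$, $\eta=0$ near $\partial H$, and $|\nabla\eta|\lesssim \varepsilon^{-\beta}$. Use the competitor
\[
u = \phi_1^H - \eta\,\min\{\phi_1^H,\ (\phi_1^H-\phi_1^\Omega)^+\}
\]
(or a simpler variant such as $u=\phi_1^H-\eta(\phi_1^H-\phi_1^\Omega)$, truncated). On $K$ we have $\phi_1^\Omega=0$, so $u=0$ there. Near $\partial H$ we have $u=\phi_1^H$. So $u$ is an admissible capacity competitor after the usual smoothing, which can be justified since $K$ is compact in $H$. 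Then
\[
\mathrm{Cap}_H(K)\le \int|\nabla u|^2 \le \lambda_1(H) + C\|\phi_1^H-\phi_1^\Omega\|_{H^1}+ C\varepsilon^{-\beta}\|\phi_1^H-\phi_1^\Omega\|_{L^2}+\dots
\]
This is the wrong normalization, so the comparison must be made relative to $\phi_1^H$. More precisely, note that $\mathrm{Cap}_H$ as defined is $\ge n-1$ trivially. The intended quantity is therefore $\mathrm{Cap}_H(K)$ minus $\lambda_1(H)$, i.e. the excess, which is what Theorem \ref{thrm: from BC06} controls. So the cleaner route is: set $w=\eta\phi_1^\Omega+(1-\eta)\phi_1^H$, normalize it, and check that it lies in $H^1_0(H\setminus K)$ with
\[
\frac{\int|\nabla w|^2}{\int w^2}\le n-1 + C\left(\varepsilon^{1/4}+\varepsilon^{-2\beta}\varepsilon^{1/2}+\varepsilon^{-\beta}\varepsilon^{1/4}\right).
\]
Then $\lambda_1(H\setminus K)-(n-1)\lesssim \varepsilon^{1/4-\beta}$, and Theorem \ref{thrm: from BC06} converts this into a capacity bound. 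With the exponent bookkeeping done more carefully (using the $\varepsilon^{1/2}$ bounds where they are available, and $\beta<1/12$), one aims at $\varepsilon^{1/3}$.

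\textbf{Main obstacle.} The difficulty is exactly the cross terms created by the cutoff. In the transition region $H\setminus H_{\varepsilon^\beta}$, the gradient of $\eta$ costs $\varepsilon^{-\beta}$ and multiplies only an $L^2$-small difference. I also need $w$ to vanish on $K$, which requires $\eta\equiv1$ on $K$; this holds by construction. Finally, the $H^1$-closeness in Step 1 must come from the eigenvalue gap alone and not from the AKN $L^2$ term. I would first try to obtain the gradient bound by the spectral-gap argument: decompose $\phi_1^\Omega$ (extended by zero) against the eigenbasis of $H$ using $\lambda_2(H)>\lambda_1(H)$. The resulting exponents determine the admissible range of $\beta$; the statement's choices $\beta<1/12$ and $1/3$ suggest a loss of this type.
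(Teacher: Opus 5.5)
\textbf{There is a genuine gap.} Your skeleton is the paper's: build a test function in $H^1_0(H\setminus K)$ whose Rayleigh quotient is close to $n-1$, then apply Theorem~\ref{thrm: from BC06}. The problem is the glued competitor $w=\eta\phi_1^\Omega+(1-\eta)\phi_1^H$, which moves the entire difficulty into an estimate you do not prove. Put $d=\phi_1^\Omega-\phi_1^H$, so $w=\phi_1^H+\eta d$ with $\eta d\in H^1_0(H)$. Integrating by parts against $\phi_1^H$ gives exactly
\[
\int|\nabla w|^2-(n-1)\int w^2=\int|\nabla(\eta d)|^2-(n-1)\int\eta^2 d^2 ,
\]
so everything hinges on $\int\eta^2|\nabla d|^2$ over $\{\eta>0\}\supset K$. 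But $\eta(\phi_1^H-\phi_1^\Omega)$ equals $\phi_1^H$ on $K$ and vanishes near $\partial H$, so it is itself a competitor for $\mathrm{Cap}_H(K)$. Hence $\mathrm{Cap}_H(K)\le\int|\nabla(\eta d)|^2$. The gradient closeness you postpone to Step~1 is therefore not a preliminary estimate: it is the lemma itself, namely the energy of the dip of $\phi_1^\Omega$ around $K$. Theorem~\ref{t:AKN} only gives $L^2$ closeness.

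Neither route you suggest supplies the gradient bound. The integration by parts gives
\[
\int|\nabla d|^2=\big(\lambda_1(\Omega)-(n-1)\big)+(n-1)\|d\|_{L^2}^2+2\int_{\partial H}\phi_1^\Omega\,|\partial_\nu\phi_1^H|\,d\mathcal H^{n-2}.
\]
The trace term is not controlled at the required rate by $L^2$ information, and Step~2 never deals with it. The spectral-gap decomposition in $H$ is unavailable, since $\phi_1^\Omega\notin H^1_0(H)$. Even granting your bounds, the displayed outcome $\varepsilon^{1/4-\beta}$ is not $O(\varepsilon^{1/3})$ for $\beta<1/12$, and the sharper bookkeeping is only announced. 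Also, competitors for $\mathrm{Cap}_H(K)$ must equal $\phi_1^H$ on $K$ (your first competitor $u$ vanishes there, so it is not admissible), and $\mathrm{Cap}_H(\emptyset)=0$, so the capacity is not $\ge n-1$.

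\textbf{The missing idea} is to avoid gradient closeness entirely. Drop the term $(1-\eta)\phi_1^H$: $\eta\phi_1^\Omega$ already lies in $H^1_0(H\setminus K)$, because $\phi_1^\Omega$ vanishes on $\overline{\Omega^c}\supset K$ whatever $\eta$ does there.
\begin{itemize}
\item The paper takes $\eta=1$ on $H_{2\varepsilon^{1/3}}$ and $\eta=0$ off $H_{\varepsilon^{1/3}}$.
\item It uses $\int|\nabla(\eta\phi)|^2=\int\eta^2|\nabla\phi|^2-\int\eta\phi^2\Delta\eta$. Equivalently, testing the eigenvalue equation with $\eta^2\phi_1^\Omega$ gives $\int|\nabla(\eta\phi_1^\Omega)|^2=\lambda_1(\Omega)\int\eta^2(\phi_1^\Omega)^2+\int(\phi_1^\Omega)^2|\nabla\eta|^2$.
\item No cross terms survive. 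The only error is $\lesssim\varepsilon^{-2/3}\int_{\mathrm{band}}(\phi_1^\Omega)^2$. This is bounded using only the $L^2$ part of Theorem~\ref{t:AKN} together with $\phi_1^{B_\Omega}\lesssim\mathrm{dist}(\cdot,\partial B_\Omega)\lesssim\varepsilon^{1/3}$ on the band.
\end{itemize}
The band must sit at distance $\approx\varepsilon^{1/3}$ from $\partial H$, independently of $\beta$. With this competitor, a transition at scale $\varepsilon^{\beta}$ would cost $\approx\varepsilon^{-2\beta}\int_{\mathrm{band}}(\phi_1^{B_\Omega})^2\approx\varepsilon^{\beta}\gg\varepsilon^{1/3}$. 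In the paper $\beta$ is used only to ensure $K\Subset H$. So the coincidence $\tfrac12-2\beta>\tfrac13\iff\beta<\tfrac1{12}$ is not the source of the exponents.

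One caveat about exponents. Your bound $\|\phi_1^\Omega-\phi_1^{B_\Omega}\|_{L^2}^2\lesssim\varepsilon^{1/2}$ is what the volume hypothesis gives as literally stated. The paper's count instead uses $\lesssim\varepsilon$, i.e.\ effectively $|\mathcal H^{n-1}(\Omega)-\tfrac12\sigma_n|\lesssim\varepsilon$. That stronger bound does hold in the paper's application, where both phases have small eigenvalue deficit. With only $\varepsilon^{1/2}$, the same cutoff argument with band width $\varepsilon^{1/6}$ yields $\varepsilon^{1/6}$ rather than $\varepsilon^{1/3}$.
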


\begin{proof}
Let $H$ be a hemisphere such that $|\Omega \Delta H|^2 \leq C\varepsilon$. We note that such a hemisphere exists by arguing as in the proof of Proposition \ref{p:FTV thrm c}, (in particular, see  \eqref{e:lambdaclosetohemi}). Let $A=\overline{\Omega^c} \cap H_{\varepsilon^{\beta}}$ and note that $A=\overline{A}$. Since $A\subset H$, it follows from Theorem \ref{thrm: from BC06} that 
\begin{equation}\label{e:001}
    \mathrm{Cap}_H(A)\leq \kappa_H\left(\lambda_1(H\setminus \overline{A})-(n-1) \right),
\end{equation}
where $\kappa_H=\frac{\lambda_1(H)+\lambda_2(H)}{\lambda_2(H)-\lambda_1(H)}$, which is a dimensional constant since $H$ is a hemisphere.\footnote{We can compute $\kappa_H=\frac{3n-1}{n+1}$, as $\lambda_1(H)=n-1$ and $\lambda_2(H)=2n$.} 
So, we are done if we can prove $\lambda_1(H\setminus A)\leq \lambda_1(\Omega)+C\varepsilon^{1/3}$, since $\lambda_1(\Omega) \leq n-1 + \varepsilon$ by assumption.

Define 
\begin{equation*}
    \eta=\begin{cases}
        1&\text{on}\qquad H_{2\varepsilon^{1/3}}\\
        0&\text{on}\qquad \mathbb{S}^{n-1}\setminus H_{\varepsilon^{1/3}}, 
    \end{cases}
\end{equation*}
such that $|\nabla \eta|\lesssim \varepsilon^{-1/3}, |\Delta \eta| \lesssim \varepsilon^{-2/3}$. For ease of notation let $\phi_1:=\phi_1^\Omega$. 
If $\psi_i \in C_c^\infty(\Omega)$ are such that $\psi_i \rightarrow \phi_1$ in $H^1(\Omega)$, then  $\eta \psi_i \in C_c^\infty(H\setminus A)$ and $\eta\psi_i \rightarrow \eta \phi_1$ in $H^1(H\setminus A)$ so 
\begin{equation}\label{e: comp bound}
    \lambda_1(H\setminus A)\leq \frac{\int_{\mathbb{S}^{n-1}} |\nabla(\eta \phi_1)|^2dx}{\sint \eta^2\phi_1^2dx}.
\end{equation}
We first bound $\int_{\mathbb{S}^{n-1}} |\nabla(\eta \phi_1)|^2dx$, writing 
\begin{equation}\label{e: comp expanded}
    \int_{\mathbb{S}^{n-1}} |\nabla(\eta \phi_1)|^2dx=\sint \eta^2|\nabla \phi_1|^2dx+2\sint \eta \phi_1 \nabla \phi_1\cdot \nabla \eta dx+\sint \phi_1^2|\nabla \eta|^2dx. 
\end{equation}
Let $I= \sint \eta \phi_1 \nabla \phi_1\cdot \nabla \eta$. Integrating by parts yields, 
\begin{equation}\label{e: 2I}
    2I=-\sint \eta \phi_1^2 \Delta \eta dx -\sint \phi_1^2|\nabla \eta|^2dx.
\end{equation}
 Substituting \eqref{e: 2I} into \eqref{e: comp expanded} gives 
\begin{equation}\label{e: comp simplified 1}
    \int_{\mathbb{S}^{n-1}} |\nabla(\eta \phi_1)|^2dx=-\sint \eta \phi_1^2 \Delta \eta dx +\sint \eta^2|\nabla \phi_1|^2dx. 
\end{equation}
Recall that $B:=B_{\Omega}$ is the spherical cap on $\mathbb{S}^{n-1}$ such that $|B_{\Omega}|=|\Omega|$ and $B$ is chosen to minimize the right hand side of \eqref{eq: AKN}. Let $\phi_B = \phi_1^{B_{\Omega}}$ denote the first eigenfunction of $B_\Omega$. Then, 
\begin{equation*}
     \int_{\mathbb{S}^{n-1}} |\nabla(\eta \phi)|^2dx\leq \lambda_1(\Omega)+2\sint |\phi_1-\phi_B|^2|\eta \Delta \eta|dx + 2\sint \phi_B^2|\eta \Delta \eta|dx.  
\end{equation*}
To bound the middle term on the right-hand side above, let $\nu_{\Omega}:=\mathcal{H}^{n-1}(\Omega)$, and we observe that from the hypotheses on $\Omega$, Theorem \ref{t:AKN}, and Lemma \ref{l:prelims} that we can bound
\begin{equation}\label{e:eigenfunction bound}
    \sint |\phi_1-\phi_B|^2\leq| \lambda_1(\Omega)-(n-1)|+|\frac{1}{2}\sigma_n-\nu_{\Omega}|<2\varepsilon.
\end{equation}

Combining this observation with the bound on $|\Delta \eta|$ and the fact that $\mathrm{spt} |\Delta \eta| \subseteq H_{\varepsilon^{1/3}}\setminus H_{2\varepsilon^{1/3}}$ we obtain 
\begin{equation}\label{e: num almost}
     \int_{\mathbb{S}^{n-1}} |\nabla(\eta \phi_1)|^2dx\leq \lambda_1(\Omega) +2c\varepsilon^{1/3} + 2c\varepsilon^{-2/3}\int_{H_{\varepsilon^{1/3}\setminus}H_{2\varepsilon^{1/3}}} \phi_B^2dx. 
\end{equation}
To handle the last term in the above equation, we again notice from the hypotheses on $\Omega$ and Theorem \ref{t:AKN} that 
$$
|B\Delta H| \leq |H\Delta \Omega| + |\Omega \Delta B| \leq C\varepsilon+C\varepsilon^{1/2}\leq C\varepsilon^{1/2}.
$$
Since $B$ and $H$ are spherical caps, we deduce that $\partial B$ is in a $C\varepsilon^{1/2}$ neighborhood of $\partial H$. 

Observe that $\mathrm{spt}\, \phi_B \subseteq B$, and $\phi_B\lesssim d(x,\partial B)$ (depending on a dimensional constant, as long as $\varepsilon$ is small enough). Thus, for $\varepsilon$ small enough (depending on the ambient dimension) 
\begin{equation}\label{e: phiB bound}
    \phi_B(x)\lesssim 2\varepsilon^{1/3}+\varepsilon^{1/2}=C \varepsilon^{1/3},\;\; \text{for all}\;\; x\in H_{\varepsilon^{1/3}}\backslash H_{2\varepsilon^{1/3}}. 
\end{equation}
Applying this to the estimate in \eqref{e: num almost} gives 
\begin{equation*}
     \int_{\mathbb{S}^{n-1}} |\nabla(\eta \phi_1)|^2\leq\lambda_1(\Omega)  
     + C\varepsilon^{1/3}.  
\end{equation*}
Thus the bound in \eqref{e: comp bound} becomes, 
\begin{equation}\label{e: num bound}
    \lambda_1(H\backslash A) \leq \frac{\lambda_1(\Omega) + C\varepsilon^{1/3}}{\int_{\mathbb S^{n-1}}\eta^2 \phi_1^2}.
\end{equation}

It is left to estimate $\sint \eta^2\phi_1^2$. Indeed, from Theorem \ref{t:AKN} (see also \eqref{e:eigenfunction bound}), 
\begin{equation}\label{e: denom bound 1}
    \sint \eta^2\phi_1^2dx\geq \frac{1}{2}\sint \eta^2\phi_B^2dx- \sint \eta^2 (\phi_1-\phi_B)^2dx \geq \frac{1}{2}\sint \eta^2\phi_B^2dx -C\varepsilon.
\end{equation}
We now focus our attention on $\sint \eta^2\phi_B^2$, and rewrite the integral as follows
\begin{equation*}
    \sint \eta^2\phi_B^2=\sint \phi_B^2+\sint (\eta^2-1)\phi_B^2 = 1 + \sint (\eta^2-1)\phi_B^2.
\end{equation*}
Since $\mathrm{spt}\, (\eta^2-1)\subseteq \mathbb{S}^{n-1}\setminus H_{2\varepsilon^{1/3}}$ and $\eta^2-1\geq -1$, we have
\begin{align*}
    \sint \eta^2\phi_B^2dx
    &= 1+\int_{\mathbb{S}^{n-1}\setminus H_{2\varepsilon^{1/3}}} (\eta^2-1)\phi_B^2dx\\
    &\geq 1-\int_{\mathbb{S}^{n-1}\setminus H_{2\varepsilon^{1/3}}} \phi_B^2dx\\
    &\geq 1-C\varepsilon^{1/3}(2\varepsilon^{1/3}+\varepsilon^{1/2})^2
\end{align*}
where the last inequality follows from the same reasoning as in \eqref{e: phiB bound}. Putting the above together with \eqref{e: denom bound 1} yields
\begin{equation*}
    \sint \eta^2 \phi_1^2 \geq  \frac{1}{2}\sint \eta^2\phi_B^2 - C\varepsilon \geq \frac{1}{4}-C\varepsilon \geq 1/8
\end{equation*}
for $\varepsilon$ small enough. Thus from \eqref{e: num bound} and the hypothesis on $\lambda_1(\Omega)$,
\begin{equation*}
    \lambda_1(H\setminus A)\leq \lambda_1(\Omega)+ C\varepsilon^{1/3} \leq n-1 + C\varepsilon^{1/3},
\end{equation*}
and moreover, from \eqref{e:001},  
\begin{equation*}\label{e: Cap bound 2}
    \mathrm{Cap}_{H}(A)\leq C\varepsilon^{1/3},
\end{equation*}
which concludes the proof.
\end{proof}

\begin{remark}
    If $\lambda(x,r)<\varepsilon$, then the hypotheses of Lemma \ref{lem: no sets of pos capacity far from BAHP} are satisfied for both $\Omega_1$ and $\Omega_2$, i.e. $|\lambda(\Omega_i)-(n-1)|<\varepsilon$ for $i=1,2$. Then the hemisphere $H$ chosen in the proof of Lemma \ref{lem: no sets of pos capacity far from BAHP} can be taken to be the hemisphere that attains the infimum for $\varepsilon_n(x,r)$. In particular, $\mathrm{Cap}_{\overline{H}^c}(\overline{\Omega}\cap \overline{H}^c_{\varepsilon^{\beta}})<C\varepsilon^{1/3}$.
\end{remark}

Lemma \ref{lem: no sets of pos capacity far from BAHP} says that both $\Omega$ and $(\mathbb{S}^{n-1}\setminus \overline{\Omega})$ contain spherical caps, $H_{\varepsilon^{\beta}}$ and $(\mathbb{S}^{n-1}\setminus \overline{H})_{\varepsilon^{\beta}}$, respectively, \textit{up to a set of small capacity}. But we aim to obtain a stronger result \textemdash that these caps are fully contained in the complementary domains, which would immediately give control on the $b\beta_{\infty}^s$-numbers. As mentioned above, it is not true that, fixing $r, x_0$, we have $\lambda(x_0, r) < Cr^\alpha \Rightarrow b\beta^s(x_0,r) \leq Cr^{\alpha'}$ (see Example \ref{ex:lambdavsbeta}). So in order to bound the $b\beta_{\infty}^s$-numbers it is crucial to use the decay of the $\lambda$-function at nearby points and scales. In fact, this decay also yields a topological result about the domain, which is extremely useful in proving control on the $b\beta_{\infty}^s$s.

\begin{definition}\label{d:corkscrew}
    We say that $\Omega^+$ is a two-sided corkscrew domain if there exists $M > 1$ and $r_0 > 0$ such that for every $x\in \partial \Omega$ and $r < r_0$ there exist points $p^{\pm}_r(x) \in \Omega^{\pm} \cap B(x, r)$ (where $\Omega^- = \overline{\Omega}^c$) such that $\mathrm{dist}(p^{\pm}_r(x), \partial \Omega) \geq M^{-1}r$.
\end{definition}

That these corkscrews exist is the content of the following lemma, which may be of independent interest: 
\begin{lemma}\label{lem:existence of corkscrews}
    Suppose $\Omega_1,\Omega_2\subset \mathbb{R}^n$ are open and disjoint domains and let $\Gamma=\partial \Omega_1\cup \partial \Omega_2$. If there exists an $r_0>0, C > 0$ such that 
    \begin{equation}\label{e:lambda assumption}
        \varepsilon_n(x,r)^2\leq Cr^{\alpha}
    \end{equation}
    for all $x\in \Gamma$ and $0<r\leq r_0$, then $\Omega_2=\mathbb{R}^n\setminus \overline{\Omega_1}$ and $\Omega_1$ is a two-sided corkscrew domain, with constants depending on $\alpha, r_0, C$ and $n$. 
\end{lemma}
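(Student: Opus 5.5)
The plan is to reduce both conclusions to volume estimates, which the coarea formula converts from the spherical condition \eqref{e:lambda assumption}. Write $U:=\mathbb R^n\setminus(\overline{\Omega_1}\cup\Omega_2)$ for the set we want to show is empty. For $x\in\Gamma$ and $s\le r_0$, \eqref{e:lambda assumption} gives a half-space $H^+_s$ with $x\in\partial H^+_s$ such that
\[
\mathcal H^{n-1}\big(\partial B(x,s)\setminus(\Omega_1\cup\Omega_2)\big)\le \varepsilon_n(x,s)s^{n-1}\le C s^{n-1+\alpha/2}.
\]
Integrating in $s\in(0,r)$ yields
\begin{equation}\label{e:smallcomplement}
|B(x,r)\setminus(\Omega_1\cup\Omega_2)|\le C r^{n+\alpha/2}.
\end{equation}
Since the set $\partial B(x,s)\cap H^\pm_s$ has measure $\tfrac12\sigma_n s^{n-1}$, the same integration also gives the two-sided density bound $|\Omega_i\cap B(x,r)|\ge (\tfrac12-Cr^{\alpha/2})\,|B(x,r)|$ for $i=1,2$.

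\emph{Complementarity.} Suppose $U\neq\emptyset$. Since $U$ is open, $\partial U\subset \partial\Omega_1\cup\partial\Omega_2=\Gamma$. Pick $z\in U$ with $\rho:=\mathrm{dist}(z,\Gamma)<r_0/2$; if every point of $U$ were far from $\Gamma$, move $z$ along a segment towards a point of $\partial U$ to arrange this. Let $x\in\Gamma$ be a closest point to $z$. Then $B(z,\rho)\subset U$ and $B(z,\rho)\subset B(x,2\rho)$, so \eqref{e:smallcomplement} forces $c\rho^n\le C\rho^{n+\alpha/2}$, which is impossible once $\rho$ is small enough. Hence $\mathbb R^n=\overline{\Omega_1}\cup\Omega_2$. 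The density lower bound shows that $\Gamma$ has empty interior, and also that every point of $\partial\Omega_2$ is a limit of points of $\Omega_1$ (and vice versa). Together these give $\Omega_2=\mathbb R^n\setminus\overline{\Omega_1}$ and $\partial\Omega_1=\partial\Omega_2=\Gamma$.

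\emph{Corkscrews.} Fix $x\in\Gamma$ and $r$ small. The goal is a ball $B(p,r/M)\subset B(x,r)\cap\Omega_1$, and symmetrically one in $\Omega_2$. The key intermediate claim is coherence of the optimal half-spaces across radii. If $s\in[r/4,r]$ and $s'\in[s/2,s]$, then the angle between $H^+_s$ and $H^+_{s'}$ should be $\lesssim r^{c\alpha}$. I would derive this from \cite[Lemma 5.3]{FTV23}, which I expect to control the oscillation of the optimal half-spaces over comparable radii in terms of $\varepsilon_n$, combined with the coarea formula. Granting coherence, all $H^+_s$ with $s\in[r/4,r]$ lie within angle $\delta:=Cr^{c\alpha}$ of a single half-space $H^+$. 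The coarea formula then shows that
\[
W:=\{y\in B(x,r)\setminus B(x,r/4):\ y \text{ lies in the cone } H^+ \text{ at angle}\ge\delta \text{ from }\partial H^+\}
\]
satisfies $|W\setminus\Omega_1|\le C r^{n+c\alpha}$.

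Now let $p$ be the point at distance $r/2$ from $x$ along the inner normal of $H^+$. Suppose some $y\in\Gamma$ lies in $B(p,r/M)$. Since $B(y,r/M)\subset W$ for $M$ a large dimensional constant, the density bound at $y$ gives
\[
|\Omega_2\cap B(y,r/M)|\ge \tfrac14|B(y,r/M)|.
\]
On the other hand, $\Omega_2\cap B(y,r/M)\subset W\setminus\Omega_1$, so $|\Omega_2\cap B(y,r/M)|\le Cr^{n+c\alpha}$. This contradicts the previous bound for $r$ small depending on $M,\alpha,C,n$. Hence $B(p,r/M)\cap\Gamma=\emptyset$. Since $p\in W$ and $\Omega_1$ fills most of $W$, the whole ball $B(p,r/M)$ lies in $\Omega_1$, so $p^+_r(x):=p$ is a corkscrew point. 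The same argument with the outer normal produces $p^-_r(x)$.

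The main obstacle is the coherence step. The bound $\varepsilon_n(x,s)\le Cs^{\alpha/2}$ constrains each sphere separately, and a priori the half-spaces could rotate with $s$, as in the spiral of Figure \ref{f: spiral}. That is why I would rely on \cite[Lemma 5.3]{FTV23} rather than prove coherence directly. If that lemma only controls the symmetric difference of the solid regions $\{ty:\ t\in[s/2,s],\ y\in H^+_s\}$ for consecutive dyadic $s$, the fallback is a disjointness argument. Two half-spaces through $x$ at angle $\theta$ each contain, on most of the annulus, mostly $\Omega_1$ on their own side of the sphere, and these sides overlap $\Omega_2$'s region in a set of measure $\gtrsim\theta s^{n}$. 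This is incompatible with the disjointness of $\Omega_1$ and $\Omega_2$ and with \eqref{e:smallcomplement}, and it would bound $\theta$ by a power of $s$.
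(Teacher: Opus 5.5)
The gap is the coherence step. Your whole corkscrew argument rests on it, and you do not prove it. \cite[Lemma 5.3]{FTV23} says nothing about the optimal half-spaces $H^+_s$. As quoted in the paper (Lemma \ref{l:quasi-cork}), its conclusion is the existence of almost-corkscrew balls.

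Your fallback is circular. The $\varepsilon_n$ conditions at two radii $s\neq s'$ constrain $\Omega_1,\Omega_2$ only on the disjoint spheres $\partial B(x,s)$ and $\partial B(x,s')$. So two optimal half-spaces at angle $\theta$ conflict neither with $\Omega_1\cap\Omega_2=\emptyset$ nor with $|B(x,r)\setminus(\Omega_1\cup\Omega_2)|\le Cr^{n+\alpha/2}$. Indeed, the values $\varepsilon_n(x,\cdot)$ do not change if each sphere $\partial B(x,s)$ is rotated about $x$ by an $s$-dependent rotation, so spheres centered at $x$ alone cannot detect rotation (compare Figure \ref{f: spiral}). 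The solid-region statement your fallback starts from, that $\Omega_1$ fills most of the cone over $H^+_s$ in an annulus, amounts to the half-spaces not rotating across that annulus. That is exactly what you are trying to prove. Ruling out rotation needs spheres centered at other points of $\Gamma$. Since $\varepsilon_n$ only gives measure control on each sphere, while two spheres meet in an $\mathcal H^{n-1}$-null set, there is no direct transfer. The paper obtains cross-scale control of planes (Lemma \ref{l:sphericaltofull}) only after upgrading to $L^\infty$ control on spheres, which $\varepsilon_n$ alone does not provide.

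The fix is to drop coherence and use Lemma \ref{l:quasi-cork} for what it actually gives, as the paper does. For $x\in\Gamma$ and small $R$ there is some $y\in\Gamma\cap\partial B(x,R)$; otherwise the connected sphere $\partial B(x,R)$ lies in one $\Omega_i$ and $\varepsilon_n(x,R)\approx 1$. Hence \eqref{e:eps cond quasi-cork} holds with $\delta\lesssim R^{\alpha}$. The lemma then gives balls $B^\pm\subset B(x,6R)$ of radius $\approx R$ with $|B^\pm\setminus\Omega^\pm|\lesssim R^{\alpha/4}|B^\pm|$. Your density argument, run with $W$ replaced by $B^+$, shows $\frac12B^+\cap\Gamma=\emptyset$, hence $\frac12B^+\subset\Omega^+$. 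That upgrade step is exactly the paper's.

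Your complementarity part is fine up to one slip. The set $\mathbb R^n\setminus(\overline{\Omega_1}\cup\Omega_2)$ need not be open: it contains $\partial\Omega_2\setminus\overline{\Omega_1}$, where $\rho=0$. So the volume argument only shows $\mathbb R^n\setminus(\overline{\Omega_1}\cup\overline{\Omega_2})$ is empty. Your density observation $\partial\Omega_2\subset\overline{\Omega_1}$ handles the remaining points, so the conclusion stands.
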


\begin{remark}
    We remark that Lemma 3.2 holds under the slightly weaker assumption that $\varepsilon_n(x,t)^2 \leq \omega(t)$ where $\omega(t)$ is a modulus of continuity satisfying the integrability condition $$\int_0^1 \frac{\omega(t)}{t}\, dt < \infty.$$ Indeed, the proof below works essentially unchanged. 
\end{remark}

To prove this lemma, we recall a measure theoretic notion of corkscrew balls:
\begin{definition}[Definition 4.1, \cite{FTV23}]
    Let $\Omega\subset \mathbb{R}^n$, $B\subset \mathbb{R}^n$ be a ball, and $\gamma\in (0,1)$. Then a ball $B^{\prime}\subset \mathbb{R}^n$ is a \emph{$\gamma$-almost corkscrew ball for $B$ and $\Omega$} if $B^{\prime}\subset B$, $r(B^{\prime})\approx r(B)$, and 
    \begin{equation}\label{e:beta almost corkscrew}
 \mathcal{H}^{n}(B^{\prime}\setminus \Omega)\leq \gamma\,\mathcal{H}^n(B^{\prime}).
    \end{equation}
\end{definition}
Observe that if $B^{\prime}$ was a true corkscrew ball for $B$ and $\Omega$, \eqref{e:beta almost corkscrew} holds with $\gamma=0$ (in fact, $B^{\prime}\setminus \Omega=\emptyset$). It was shown that uniform decay of the $\varepsilon_n$ implies the existence of almost corkscrew balls in the following way:

\begin{lemma}[Lemma 5.3, \cite{FTV23}]\label{l:quasi-cork}
    Let $x,y\in \mathbb{R}^n$ such that $|x-y|=R$, for some $R$ small enough. There exists $\delta_0>0$ small enough so that if 
    \begin{equation}\label{e:eps cond quasi-cork}
        \int_0^{8R}\left(\varepsilon_n(x,t)^2+\varepsilon_n(y,t)^2 \right)\frac{dt}{t}\leq \delta.
    \end{equation}
for $0<\delta\leq \delta_0$, then there exist $B^1,B^{2}\subset B(x,6R)$ and $C>0$ such that each $B^i$ is a $C\delta^{1/4}$-almost corkscrew for $B(x,6R)$ and $\Omega_i$, with $r(B^i)\approx R$, for $i=1,2$.
\end{lemma}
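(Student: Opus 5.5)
The plan is to prove Lemma \ref{l:quasi-cork} by a Fubini/co-area argument. The Dini-type smallness \eqref{e:eps cond quasi-cork} says that, for most radii around $x$ and around $y$, the spheres are split by $\Omega_1,\Omega_2$ into nearly complementary hemispheres in the sense of $\mathcal H^{n-1}$-measure. These slicewise statements should then integrate to a statement about $\mathcal H^n$-measure in a ball of radius $\approx R$.

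First, Chebyshev's inequality on \eqref{e:eps cond quasi-cork} gives a set $G_x\subset[R/2,8R]$ of radii with $\int_{[R/2,8R]\setminus G_x}\frac{dt}{t}\le \delta^{1/2}$ and $\varepsilon_n(x,t)^2\le\delta^{1/2}$ for $t\in G_x$. Define $G_y$ the same way. For each $t\in G_x$ pick a half-space $H^+_x(t)$ through $x$ nearly realizing the infimum in \eqref{e:defepsilonn}; define $H^+_y(s)$ for $s\in G_y$ likewise. Then form the ``rotating cone'' $C_x=\bigcup_{t\in G_x}\partial B(x,t)\cap H^+_x(t)$ and similarly $C_y$. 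Integrating over $t$ with the co-area formula for $|\cdot-x|$ gives
\[
\mathcal H^n\big((C_x\,\Delta\,\Omega_1)\cap (B(x,8R)\setminus B(x,R/2))\big)\lesssim \delta^{1/4}R^n .
\]
The exponent comes from $\varepsilon_n\le\delta^{1/4}$ on good radii, together with the bad radii having total length $\lesssim \delta^{1/2}R$. The same bound holds for $\Omega_2$ with $H^-_x(t)$, and for $C_y$ around $y$.

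The main obstacle is the lack of coherence across scales (cf.\ Figure \ref{f: spiral}): the normals of $H^+_x(t)$ may rotate with $t$, so $C_x$ alone need not contain any ball. Here the second center $y$ is used. Since $C_x$ and $C_y$ both approximate $\Omega_1$ in measure in the region where the annuli around $x$ and $y$ overlap, we get $\mathcal H^n(C_x\Delta C_y)\lesssim\delta^{1/4}R^n$ there. A sphere $\partial B(y,s)$ crosses the spheres $\partial B(x,t)$ transversally for $s\approx t\approx R$. If the normal of $H_x^+(t)$ varied by a definite angle as $t$ ranges over a subinterval of length $\approx R$, the slice $\partial B(y,s)\cap C_x$ would not be close in measure to any hemisphere. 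That would contradict $\varepsilon_n(y,s)$ being small for good $s$, after a further Fubini over $s$. From this I expect to extract an interval $I\subset G_x$ of length $\approx R$ on which all $H^+_x(t)$ are within a small angle of a fixed half-space $H^+$. This is the step I expect to require the most care: quantifying how a rotating family of hemispheres fails to restrict to a hemisphere on a transversal sphere.

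Given such an $I$, choose $B^1$ centered at $x+\tfrac{t_0}{1}\nu$ for $t_0$ the midpoint of $I$ and $\nu$ the inner normal of $H^+$, with radius $cR$ small enough that $B^1\subset\bigcup_{t\in I}\partial B(x,t)\cap H^+_x(t)\subset B(x,6R)$. The estimate on $C_x\Delta\Omega_1$ then gives $\mathcal H^n(B^1\setminus\Omega_1)\le C\delta^{1/4}\mathcal H^n(B^1)$. Taking $-\nu$ produces $B^2$ for $\Omega_2$, using the $H^-$ slices. This yields the two $C\delta^{1/4}$-almost corkscrew balls with $r(B^i)\approx R$, provided $\delta_0$ is small enough that the angle control beats the constant $c$.
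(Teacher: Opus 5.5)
\textbf{There is a genuine gap at the coherence step.}

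This paper does not prove the lemma. It imports it from \cite[Lemma 5.3]{FTV23}, only remarking that the constant $C\delta^{1/4}$ can be read off from the end of that proof. So your proposal has to stand on its own, and it does not. Your Chebyshev/co-area bookkeeping is fine. On good radii the slice of $C_x\Delta\Omega_1$ has measure $\lesssim \varepsilon_n(x,t)t^{n-1}$, because $\Omega_1\cap\Omega_2=\emptyset$, and the bad radii cost $\lesssim\delta^{1/2}R^n$. The final ball construction is also fine once a rotation bound is available. But the rotation bound is the only place the second center $y$ enters, and it is the entire content of the lemma. You assert it rather than prove it.

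Worse, the mechanism you propose for it is false as stated, at least in the plane. Take $\Omega_1=\{v>a\sin(\pi u/R)\}$ and $\Omega_2=\{v<a\sin(\pi u/R)\}$ with $0<a<R/\pi$, $x=0$, $y=(R,0)$. The function $u\mapsto a\sin(\pi u/R)$ is odd about $u=0$ and about $u=R$, and $u^2+a^2\sin^2(\pi u/R)$ is strictly increasing for $u>0$. Hence every circle centered at $x$ or at $y$ meets the curve in exactly two antipodal points, so $\varepsilon_2(x,t)=\varepsilon_2(y,t)=0$ for all $t$. Yet the optimal line on $\partial B(x,t)$ passes through $(u_+,a\sin(\pi u_+/R))$, with $u_+=u_+(t)$. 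Its angle with the horizontal is $\arctan(a\sin(\pi u_+/R)/u_+)$. This oscillates by an amount comparable to $a/t$ on every $t$-interval of length $\approx R$, independently of $\delta$ (here $\delta=0$). For example, with $a=R/4$ it moves from $\arctan\tfrac12$ to $-\arctan\tfrac16$ as $u_+$ goes from $R/2$ to $3R/2$. So a definite rotation of $H^+_x(t)$ does not force $\partial B(y,s)\cap C_x$ to be far from a hemisphere. No angle bound tending to $0$ with $\delta$ can be extracted in general, so your closing clause ``provided $\delta_0$ is small enough that the angle control beats the constant $c$'' has nothing to rest on. Your heuristic is dimension-free, so it cannot be right as written.

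\textbf{What is actually needed.} Your ball construction needs much less than a small angle. It needs an interval $I$ of length $\gtrsim cR$ and a direction $\nu$ with $\angle(\nu_x(t),\nu)\le \pi/2-Cc$ for most $t\in I$. In other words, you must rule out spiral-type fast rotation (Figure \ref{f: spiral}) using the spheres centered at $y$. That is a real argument you still have to supply. Two possible routes:
\begin{itemize}
\item In the plane, bound how many times $\partial B(y,s)$ can cross the two arms of the rotating cone while $\Omega_1\cap\partial B(y,s)$ stays measure-close to a half-circle.
\item In $n\ge 3$, use Fubini over the $(n-2)$-spheres $\partial B(x,t)\cap\partial B(y,s)$. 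The traces there of a half-space through $x$ and of one through $y$ are caps whose directions and heights must nearly agree for most pairs $(t,s)$. This is in the spirit of the $(n-2)$-sphere argument of Lemma \ref{l:sphericaltofull}, and the example above shows it is genuinely dimension-sensitive.
\end{itemize}

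\textbf{Smaller points.}
\begin{itemize}
\item Slices $\partial B(y,s)$ with $s\in[R/2,3R/2]$ enter $B(x,R/2)$, where you have no control on $C_x$, so the ranges of $t$ and $s$ must be chosen with care.
\item $I$ cannot be a subset of $G_x$. It can only be an interval with $|I\setminus G_x|$ small.
\item The choice $t\mapsto H^+_x(t)$ needs to be made measurably for $C_x$ to be measurable.
\end{itemize}
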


We should note that we have rewritten the statement of \cite[Lemma 5.3]{FTV23} to highlight the dependence of the corkscrew constant on $\delta$ (this dependence follows immediately from their proof, see e.g., the last line of the proof of \cite[Lemma 5.3]{FTV23}).

We remark that the conclusion, $\Omega_2=\mathbb{R}^n\setminus\overline{\Omega_1}$ in Lemma \ref{lem:existence of corkscrews}, follows from arguments in \cite[Proposition 3.1]{CTV25}, with minor adjustments to account for the lack of Ahlfors regularity of $\partial \Omega_1\cup \partial \Omega_2$ and the different decay on $\varepsilon_n$ (as in \eqref{e:lambda assumption}). We provide a self-contained proof of this conclusion in the proof of Lemma \ref{lem:existence of corkscrews}. 

\begin{proof}[Proof of Lemma \ref{lem:existence of corkscrews}]  
We first show that $\Omega_2:=\mathbb{R}^n\setminus \overline{\Omega_1}$. We begin by observing that $\partial \Omega _1 = \partial \Omega_2$. Indeed, if there exists $x\in \partial \Omega_1 \setminus \partial \Omega_2$ with $d(x,\partial \Omega_2)>0$, then $\varepsilon_n(x,s)\approx 1$ for all $0<s<\frac{d(x,\partial \Omega_2)}{2}$, which contradicts \eqref{e:lambda assumption}. We now show $\Omega_2=\mathbb{R}^n\setminus \overline{\Omega_1}$. Assume not and let $x_0 \in \mathbb R^n\backslash (\overline{\Omega}_1\cup \overline{\Omega}_2)$. Let $r>0$ be such that $B(x_0,r) \subset \mathbb R^n\backslash (\overline{\Omega}_1\cup \overline{\Omega}_2)$ but there exists a $y\in \overline{B(x_0, r)}\cap (\overline{\Omega}_1\cup \overline{\Omega}_2)$. Note by translating $x_0$ if necessary, we can take $r\leq r_0$. Then note that $$\mathcal H^{n-1}(\partial B(y, r)\backslash (\overline{\Omega_1}\cup \overline{\Omega_2})) \geq \mathcal H^{n-1}(\partial B(y,r)\cap B(x_0, r/2)) \geq c_nr^{n-1}.$$
 This contradicts the decay of $\varepsilon_n(y,r)$.
 Thus we rename 
$$
\Omega^+:=\Omega_1\qquad \text{and} \qquad \Omega^-:=\Omega_2.
$$
It is left to show that $\Omega^+$ is a two-sided corkscrew open set.

Fix $x\in \partial \Omega^+$. Observe that for any $0<R\leq r_0$ there exists $y\in \partial \Omega^+\cap \partial B(x,R)$, otherwise $\varepsilon_n(x,R)\approx \sigma_n$, which would contradict \eqref{e:lambda assumption}. Thus, \eqref{e:eps cond quasi-cork} holds with $\delta=C(\alpha)R^{\alpha}$, for $0<R\leq r_0$. It then follows from Lemma \ref{l:quasi-cork} that there exists $B^{+},B^-\subset B(x,6R)$ such that $B^{+},B^-$ are $(C\delta)^{1/4}$-almost corkscrew balls for $B(x,6R)$ and $\Omega^{+},\Omega^-$, respectively, where $r(B^{i})\approx R$, for $i=+,-$. 
We claim that $\frac{1}{2}B^{+}\subset \Omega^{+}$. The analogous statement for $\frac{1}{2}B^-$ follows from the below arguments, replacing $B^+$ and $\Omega^+$ with $B^-$ and $\Omega^-$, respectively. 

Suppose for contradiction that there exists $x_0\in \frac{1}{2}B^+\cap \partial \Omega^+$, then $B(x_0,\frac{1}{2}r(B^+))\subseteq B^+$. Now, $\varepsilon_n(x_0,s)\leq Cs^{\alpha}$ for all $0<s<\frac{r(B^+)}{2}$, which implies (because $r_0$ is chosen small enough) that 
\begin{equation}\label{e:balanceonsphere}
    \frac{1}{4}\sigma_ns^{n-1}\leq \mathcal{H}^{n-1}(\partial B(x_0,s)\cap \Omega^-).
\end{equation}
Thus, from \eqref{e:beta almost corkscrew}, \eqref{e:balanceonsphere} and the co-area formula
\begin{align*}
    CR^{\alpha/4}\mathcal{H}^n(B^+)
    &\geq \mathcal{H}^n(B^+\cap \Omega^-)\\
    &\geq \mathcal{H}^n(B(x_0,\frac{1}{2}r(B^+))\cap \Omega^-)\\
    &= \int_0^{\frac{1}{2}r(B^+)} \mathcal{H}^{n-1}(\partial B(x_0,\frac{1}{2}r(B^+))\cap \Omega^-)ds\\
    &\geq C(n)\int_0^{\frac{1}{2}r(B^+)}s^{n-1}ds\\
    &=C(n)\mathcal{H}^n(B^+),
\end{align*}
where $C(n)$ is some dimensional constant. Thus, if $r_0$ is initially taken small enough depending on dimension, this is a contradiction and we have our corkscrew ball. 
\end{proof}

Before proving Lemma \ref{lem:no sets of zero capactiy far from BAHP}, it will be useful to know that there are points in $\partial \Omega^+$ close to the boundary of the best approximating hemisphere.

\begin{lemma}\label{lem: boundary close to hemisphere boundary}
    Let $\varepsilon\in (0,1)$ small. Suppose $V\subseteq \mathbb{S}^{n-1}$ and suppose there exists a hemisphere $H_V$ such that $$\mathcal H^{n-1}\left[\left(H_V\backslash V\right)\cup \left(H_V^c \backslash \overline{V}^c\right)\right] \leq \varepsilon.$$ Then there exists $\eta_0>0$ depending on $n,\varepsilon$ so that for any $p\in \partial H_V$ and $\eta\leq \eta_0$ there exists a point $q\in \partial V$ such that $|q-p|<\varepsilon^{\eta}$. 
\end{lemma}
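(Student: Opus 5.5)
We argue by contradiction using a simple measure-counting argument on a small geodesic ball. Fix $p\in\partial H_V$ and $\eta\le\eta_0$, and set $r:=\varepsilon^{\eta}$. Suppose $B(p,r)\cap\partial V=\emptyset$, where $B(p,r)$ is the geodesic (or chordal) ball in $\mathbb S^{n-1}$. The ball $B(p,r)$ is connected. It is partitioned into the relatively open sets $B(p,r)\cap V$ and $B(p,r)\cap \overline{V}^c$ together with the set $B(p,r)\cap\partial V$, which is empty by assumption. Hence $B(p,r)$ lies entirely in $V$ or entirely in $\overline V^c$.

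We then use that $p\in\partial H_V$: the ball $B(p,r)$ meets both $H_V$ and $H_V^c$ in sets of measure at least $c_n r^{n-1}$ for $r$ small. Indeed, $\partial H_V$ is a great sphere through the center, so $B(p,r)$ is split into two congruent halves, each of measure $\tfrac12\mathcal H^{n-1}(B(p,r))\ge c_n r^{n-1}$. There are two cases.

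\begin{enumerate}
\item[(a)] If $B(p,r)\subset V$, then $B(p,r)\cap H_V^c\subset V\subset \overline V$. Hence $B(p,r)\cap H_V^c$ is disjoint from $\overline V^c$, so it is contained in $H_V^c\setminus\overline V^c$.
\item[(b)] If $B(p,r)\subset \overline V^c$, then $B(p,r)\cap H_V\subset H_V\setminus V$.
\end{enumerate}

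In either case the hypothesis gives $c_n r^{n-1}\le \varepsilon$, that is, $c_n\varepsilon^{\eta(n-1)}\le\varepsilon$. This is false once $\eta(n-1)<1$ and $\varepsilon$ is small, for instance when $\eta\le\eta_0:=\frac{1}{2(n-1)}$ and $\varepsilon<c_n^{2}$. (If $\varepsilon$ is not that small, we shrink $\eta_0$ depending on $\varepsilon$, which the statement allows.) We therefore conclude that there exists $q\in\partial V\cap B(p,r)$, and since the geodesic distance dominates the chordal one, $|q-p|<\varepsilon^\eta$.

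We also need $r=\varepsilon^\eta<1$ so that the half-ball measure estimate holds with a dimensional constant, and this is automatic since $\varepsilon<1$. The only mildly delicate point is the connectedness step. Because $V\cup\overline V^c\cup\partial V=\mathbb S^{n-1}$ is a disjoint decomposition, it is valid for arbitrary $V$ with no regularity assumptions, and so there is no real obstacle. The main thing to track is the dependence of $\eta_0$ on $n$ and $\varepsilon$ through the constant $c_n$.
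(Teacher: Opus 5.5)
Your argument is correct and matches the paper's: if $B(p,\varepsilon^\eta)$ missed $\partial V$ it would lie entirely in $V$ or in $\overline{V}^c$, so half of it would lie in the exceptional set, giving $c_n\varepsilon^{\eta(n-1)}\le\varepsilon$; you supply the connectedness and half-ball details that the paper leaves implicit. Two minor points, neither of which affects the main argument:
\begin{itemize}
\item Your parenthetical about shrinking $\eta_0$ when $\varepsilon$ is not small does not work. As $\eta\to0$ the radius $\varepsilon^\eta$ only increases to $1$, so the argument needs $\varepsilon$ below a dimensional threshold however small $\eta_0$ is. This is harmless, since the lemma assumes $\varepsilon$ small.
\item $V$, $\partial V$ and $\overline V^c$ are disjoint only when $V$ is open. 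Your connectedness step still goes through, because $B(p,r)\cap\partial V=\emptyset$ forces $B(p,r)\cap V=B(p,r)\cap\mathrm{int}\,V$.
\end{itemize}
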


\begin{proof}
Assume the geodesic ball $B(p, \varepsilon^{\eta})$ on $\mathbb{S}^{n-1}$ is either entirely contained in $A$ or entirely contained in $\overline{A}^c$. Either way this implies that $\varepsilon^{\eta(n-1)} \leq C_n \varepsilon$, which yields a contradiction if $\eta\lesssim_{n,\varepsilon}\frac{1}{n-1}$. Thus we have the desired result. 
\end{proof}

\begin{lemma}[Control on the $b\beta_{\infty}^s$s]\label{lem:no sets of zero capactiy far from BAHP}
Let $\Omega^+\subseteq \mathbb{R}^n$ be a two-sided corkscrew open set and let $\Omega^-:=\mathbb{R}^n\setminus \overline{\Omega^+}$ and $\Gamma:= \partial \Omega^{+}$. Fix $y_0\in \Gamma$. Suppose that for all $x\in B(y_0,2)\cap \Gamma$ and $1/2 < r < 2$ we have $\lambda(x, r) \leq \varepsilon < \varepsilon_0$. Let $\gamma < \frac{1}{200(n+1)}$. Then if $\varepsilon_0$ is small enough (depending on $\gamma, n$ and the corkscrew constants), for each $x_0\in B(y_0,1)$ there are complementary half-spheres $H^{\pm}(x_0, 1)\subset \partial B(x_0, 1)$ such that

$$\sup_{y\in \overline{\Omega^{\pm}} \cap \partial B(x_0,1)} \mathrm{dist}(y, H^{\pm}(x_0, 1))\leq C\varepsilon^{\gamma}.$$
Moreover, 
$$\begin{aligned} \{y\in \partial B(x_0,1)\mid (y-x) \cdot e_n \geq \varepsilon^{\gamma}\} \subset& \Omega^+, \qquad \text{and}\\
    \{y\in \partial B(x_0,1)\mid (y-x) \cdot e_n \leq -\varepsilon^{\gamma}\} \subset& \Omega^-. 
    \end{aligned}$$

\end{lemma}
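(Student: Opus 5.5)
We first find the approximating hemisphere at the single sphere $\partial B(x_0,1)$, and then use nearby spheres to rule out stray boundary points. Fix $x_0\in B(y_0,1)$. Rescaling and translating so that $\partial B(x_0,1)$ is identified with $\mathbb S^{n-1}$, set $V^\pm = \Omega^\pm\cap\partial B(x_0,1)$. By Lemma \ref{l:prelims}(ii),(iii) and the proof of Proposition \ref{p:FTV thrm c}, the bound $\lambda(x_0,1)\le\varepsilon$ gives $|\mathcal H^{n-1}(V^\pm)-\tfrac12\sigma_n|\lesssim\varepsilon$. Hence Lemma \ref{lem: no sets of pos capacity far from BAHP} and the subsequent Remark apply to both $V^+$ and $V^-$ with a common hemisphere $H=H^+(x_0,1)$ (rotate so that $H=\{(y-x_0)\cdot e_n>0\}$) and $H^-=\overline H^c$. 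This yields
\[
\mathrm{Cap}_{H^\pm}\big(\overline{V^\mp}\cap H^\pm_{\varepsilon^{\beta}}\big)\le C\varepsilon^{1/3}.
\]
We also have $\mathcal H^{n-1}(V^\pm\Delta H^\pm)\lesssim\varepsilon^{1/2}$ from \eqref{e:lambdaclosetohemi}. Both conclusions of the statement follow once we show that no point of $\Gamma$ lies in $\partial B(x_0,1)\cap\{|(y-x_0)\cdot e_n|\ge\varepsilon^\gamma\}$. Indeed, the caps $\{(y-x_0)\cdot e_n\ge\varepsilon^\gamma\}$ and $\{(y-x_0)\cdot e_n\le-\varepsilon^\gamma\}$ are connected. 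Each meets the correct side in positive measure, by the symmetric difference bound. So if a cap contains no point of $\Gamma$, it lies entirely in $\Omega^+$ or entirely in $\Omega^-$, and measure forces the correct one. The sup bound then follows because $\overline{\Omega^\pm}\cap\partial B(x_0,1)$ avoids the opposite cap.

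\textbf{Main step, by contradiction.} Suppose $z\in\Gamma\cap\partial B(x_0,1)$ with $(z-x_0)\cdot e_n\ge\varepsilon^\gamma$; the other sign is symmetric. By the two-sided corkscrew property of $\Omega^+$ at $z$, at scale $\rho=c\varepsilon^{\gamma}$ with $c$ small, there is a ball $B(p^-,M^{-1}\rho)\subset\Omega^-\cap B(z,\rho)$. We now choose a sphere $\partial B(x,r)$, with $x\in\Gamma\cap B(y_0,2)$ and $r\in(1/2,2)$, that passes through this $\Omega^-$ ball well inside what should be the $\Omega^+$ side. The natural choice is $x=x_0$ itself with $r=|p^--x_0|$, perturbed so that $r\in(1/2,2)$. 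Here $|r-1|\le\rho$, and $\partial B(x_0,r)$ meets $B(p^-,M^{-1}\rho)$ in a spherical cap of radius $\approx M^{-1}\rho$. Since $\rho\le\varepsilon^\gamma/2$, this cap sits at height $\gtrsim\varepsilon^\gamma$ above the equator.

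Now apply the first paragraph to the sphere $\partial B(x_0,r)$. We get an approximating hemisphere $H(x_0,r)$ with a capacity bound on $\overline{\Omega^-}\cap H(x_0,r)_{\varepsilon^\beta}$. If $H(x_0,r)$ is close to $H$, say within $\varepsilon^{\gamma}/4$, then the $\Omega^-$ cap found above lies in $H(x_0,r)_{\varepsilon^\beta}$ provided $\gamma<\beta$. Its relative capacity is $\gtrsim(M^{-1}\rho)^{n-3}\varepsilon^{2\gamma}$, or at least some positive power of $\varepsilon^\gamma$: a cap of radius $s$ at distance $d$ from the boundary has capacity $\gtrsim s^{n-3}d^2$, as in Case 2 of Lemma \ref{lem: relative cap to cap}, using a smooth $(n-2)$-sphere inside the cap. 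This lower bound exceeds $C\varepsilon^{1/3}$ once $\gamma(n+1)\ll 1/3$, which is consistent with $\gamma<\frac{1}{200(n+1)}$. That is the contradiction.

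\textbf{Expected obstacle.} The hardest step is justifying that $H(x_0,r)$ is close to $H$ when $|r-1|\lesssim\varepsilon^\gamma$. I would argue via measure using the co-area formula. The sets $V^\pm(x_0,s)$ are $\varepsilon^{1/2}$-close in measure to the hemispheres $H(x_0,s)$ for every $s\in(1/2,2)$. If the normals of $H(x_0,s)$ jumped by more than $\varepsilon^{\gamma}/4$ over $s\in[1-\rho,1+\rho]$, we would integrate over this shell. Since $\Omega^\pm$ is fixed, the measures $\mathcal H^{n}(\Omega^+\cap\text{shell}\,\Delta\,\text{cone over }H)$ computed at nearby radii must agree up to $\rho\varepsilon^{1/2}$. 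This forces the normals to vary by $\lesssim\varepsilon^{1/2}/\rho$, after choosing a good radius via Chebyshev. Alternatively, I would avoid the issue entirely by choosing $r$ so that $\partial B(x_0,r)$ and $\partial B(x_0,1)$ are close. I would also check the capacity lower bound for a ball-shaped obstacle, which is not covered verbatim by Lemma \ref{lem: relative cap to cap}, by inscribing a smooth $(n-2)$-dimensional disk in the cap.
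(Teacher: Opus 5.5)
\textbf{There is a genuine gap: the step you flag, showing that $H(x_0,r)$ is close to $H=H(x_0,1)$, cannot be done the way you propose.} The rest is fine. Your reduction to ``no point of $\Gamma$ on $\partial B(x_0,1)$ at height $\ge\varepsilon^\gamma$'' and the connectedness/measure argument deriving both conclusions from it are correct (you need $x_0\in\Gamma$ to use the hypothesis at $x_0$).

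Concentric spheres are disjoint, so the hypothesis at the single center $x_0$ imposes no relation between $\Omega^\pm\cap\partial B(x_0,r)$ and $\Omega^\pm\cap\partial B(x_0,1)$. For example, $\Omega^+=\{y:(y-x_0)\cdot e(|y-x_0|)>0\}$, with $e(s)$ any continuously rotating unit vector, has $\lambda(x_0,s)=0$ for every $s$, while the normals rotate as fast as you like. Your co-area argument does not exclude this. The quantity $\mathcal H^n((\Omega^+\cap\text{shell})\,\Delta\,(\text{cone over }H))$ is a single number, equal to $\int\mathcal H^{n-1}(V^+(x_0,s)\,\Delta\,\text{cone}_H(s))\,ds$. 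Nothing forces it to be small, and there is no sense in which ``measures at nearby radii must agree.'' This is exactly the lack of cohesiveness across scales illustrated in Figure \ref{f: spiral}. Choosing $r$ close to $1$ does not help either, since nearby concentric spheres can carry very different hemispheres. Nor can you find the obstruction on $\partial B(x_0,1)$ itself: the corkscrew ball need not meet that sphere, and $\Gamma$ near $z$ may have zero capacity there (Example \ref{ex:lambdavsbeta}). Any argument using only spheres centered at $x_0$ is doomed.

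\textbf{The missing idea is to compare two spheres that actually intersect, centered at different points of $\Gamma$.} The paper proceeds as follows.
\begin{itemize}
\item It picks $z'\in\partial H$ with $|y-z'|=\sqrt2$. Lemma \ref{lem: boundary close to hemisphere boundary} gives $q\in\Gamma\cap\partial B(x_0,1)$ with $|q-z'|<\varepsilon^{16\gamma}$. It sets $R=|p^--q|\approx\sqrt2$.
\item The sphere $\partial B(q,R)$ then passes through the corkscrew ball and meets $\partial B(x_0,1)$ in an $(n-2)$-sphere $\Sigma_R$ of radius $\approx 1$.
\item The $\Omega^-$ ball at height $\gtrsim\varepsilon^\gamma$ forces the normal $e_R$ of the best hemisphere $H_R$ of $\partial B(q,R)$ to satisfy $e_R\cdot e_n\le 1-c\,\varepsilon^{2\gamma}$.
\item A case analysis on $q\cdot e_R$ and $e_R\cdot e_n$ produces $w\in\Sigma_R$ lying in $H_R\cap\overline H^c$ or in $\overline{H_R}^c\cap H$, at distance $\gtrsim\varepsilon^{8\gamma}$ from both equators.
\item Since $\overline{\Omega^+}\cup\overline{\Omega^-}=\mathbb R^n$, one of the two closures contains a piece of $\Sigma_R\cap B(w,\tau\varepsilon^{8\gamma})$ of $\mathcal H^{n-2}$-measure $\gtrsim(\tau\varepsilon^{8\gamma})^{n-2}$. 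That piece lies on the wrong side of the approximating hemisphere on one of the two spheres.
\item Lemma \ref{lem: relative cap to cap}, applied with $\Sigma\subset\Sigma_R$, bounds its relative capacity below by $\varepsilon^{8\gamma(n+1)}\ge\varepsilon^{1/25}$. This contradicts the $C\varepsilon^{1/3}$ bound of Lemma \ref{lem: no sets of pos capacity far from BAHP} on that sphere.
\end{itemize}
The shared $(n-2)$-sphere $\Sigma_R$ is precisely the link that concentric spheres lack.

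A minor point: for a cap of radius $s$ at distance $d$ from the equator, Lemma \ref{lem: relative cap to cap} gives the lower bound $s^{n-1}d^2$, not $s^{n-3}d^2$. Any positive power would have sufficed, so this does not affect the argument.
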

\begin{proof}
Fix $x_0\in \Gamma$ and consider $\partial B(x_0,1)$. Assume by rotating and translating if necessary that $x_0=0$. Moreover, assume $H^+(0,1)=\{x_n>0\}\cap \partial B(0,1)$, where $H^{+}$ is chosen as in Lemma \ref{lem: no sets of pos capacity far from BAHP}, and for the rest of the proof we let $H:=H^+$.

We assume for contradiction that there exists 
$$
y\in \Gamma \cap \partial B(0,1) \qquad \text{with}\qquad d(y,\partial H)>\varepsilon^{\gamma}.
$$
Suppose without loss of generality $y\in H$, and let $B(p^-,\frac{1}{100M}\varepsilon^{\gamma})$ denote the corkscrew ball for $\Omega^-$ and $B(y,\frac{1}{100}\varepsilon^{\gamma})$, where $M>1$.  See Figure \ref{f:set up}. Note that if instead $y\in \overline{H}^c$, then instead we would consider $B(p^+,\frac{1}{100M}\varepsilon^{\gamma})$, the corkscrew ball for $\Omega^+$ and $B(y,\frac{1}{100}\varepsilon^{\gamma})$. 

\begin{figure}
    \centering
\begin{tikzpicture}[scale=1.2, >=stealth]

    \definecolor{reddish}{RGB}{230, 80, 70}

    \draw[thick, gray!80!black] (0,0) circle (3cm);

    \draw[thick, gray!80!black] (-3,0) arc (180:360:3cm and 0.6cm);
    \draw[gray!50!black, dashed] (3,0) arc (0:180:3cm and 0.6cm);

    \filldraw[black] (0,0.1) circle (1.5pt) node[right=2pt] {$0$};

    \draw[thick, reddish] (-2.95,0.4) arc (180:360:2.95cm and 0.6cm);
    \draw[dashed, reddish,  opacity=0.6] (2.85,.6) arc (0:180:2.85cm and 0.55cm);

    \node[ font=\Large] at (-3.4,0.1) {$\varepsilon^\gamma \, [$};

    \begin{scope}[shift={(-1.4,1.4)}]
        \draw[thick, reddish] (0,0) circle (1.2cm);

        \draw[thick, reddish] (-1.2,0) arc (180:360:1.2cm and 0.25cm);
        \draw[reddish, dashed] (1.2,0) arc (0:180:1.2cm and 0.25cm);

        \filldraw[reddish] (0.2,0.1) circle (1.5pt);
        \node[reddish, font=\large] at (-0.3,-0.1) {$y$};

        \draw[thick, reddish] (-0.3,0.5) circle (0.3cm);
        \draw[thick, reddish] (-0.55,0.45) arc (200:340:0.28cm and 0.1cm);
        \filldraw[reddish] (-0.3,0.5) circle (1pt);

        \draw[thick, reddish] (-0.2,0.75) -- (0.1,1.1) node[right, font=\large] {$p$};

        \node[reddish, font=\large, left] at (-1.5,-0.6) {$B(y, \frac{1}{100}\varepsilon^\gamma)$};
    \end{scope}

\end{tikzpicture}
    
    \caption{The set up}
    \label{f:set up}
\end{figure}

Let $z\in \partial H$ be chosen so that $|y-z|=\sqrt{2}$. Then from Lemma \ref{lem: boundary close to hemisphere boundary} it follows that there exists $q\in \partial B(0,1) \cap \Gamma$ such that 
\begin{equation*}
    |z-q|<\varepsilon^{16\gamma}.
\end{equation*}
 Let $R=|p^--q|$ and note that $R\approx \sqrt{2}$. In more details,
\begin{align*}
    R&\leq |p^--y|+|y-z|+|z-q|\leq \frac{1}{100}\varepsilon^{\gamma}+\sqrt{2}+\varepsilon^{2\gamma}\leq \frac{1001}{1000}\sqrt{2},\\
    R&\geq |y-z|-|p^--y|-|z-q|\geq \sqrt{2}- \frac{1}{100}\varepsilon^{\gamma}+-\varepsilon^{2\gamma}\geq \frac{999}{1000}\sqrt{2},\\
\end{align*}
for $\varepsilon$ small enough depending on $\gamma$. Now consider $B(q,R)$. 

\begin{figure}
    \centering
\begin{tikzpicture}[scale=1.2, >=stealth]

    \definecolor{reddish}{RGB}{230, 80, 70}
    \definecolor{softblue}{RGB}{60, 150, 245}

    \fill[reddish, opacity=0.7] (0.8, 3.8) rectangle (1.0, 4.0);
    \node[reddish, font=\small, right] at (1.0, 3.9) {$= B(p, \frac{1}{1000}\varepsilon^\gamma) \cap \partial B(q, R)$};

    \draw[thick, gray!80!black] (0,0) circle (3cm);

    \draw[thick, gray!80!black] (-3,0) arc (180:360:3cm and 0.6cm);
    \draw[gray!50!black, dashed] (3,0) arc (0:180:3cm and 0.6cm);

    \draw[thick, softblue!80!white] (-0.5, 0.05) circle (3.02cm);
    \node[softblue, font=\large, right] at (-4.95, -1.8) {$B(q,R)$};

    \draw[very thick, softblue] (0.1, 3.0) arc (90:-90:0.8cm and 3.0cm);
    \draw[thick, softblue!60!white, dashed] (0.1, 3.0) arc (90:270:0.8cm and 3.0cm);
    
    \node[softblue, font=\large, below] at (0.1, -3.0) {$\Sigma_R$};

    \filldraw[black] (0.3, 0.2) circle (1.5pt) node[right=2pt] {$0$};

    \draw[thick, reddish] (-2.95, 0.4) arc (180:360:2.95cm and 0.6cm);
    \draw[thick, reddish, dashed] (2.85, .70) arc (0:180:2.85cm and 0.55cm);

    \node[reddish, font=\Large] at (3.4, 0.15) {$]\,\varepsilon^\gamma$};

    \begin{scope}[shift={(-.68,-0.6)}]
        \draw[thick, black] (0,0) circle (0.5cm);
        
        \filldraw[black] (-0.15,0.1) circle (1.5pt) node[left=1pt, font=\small] {$q$};
        \filldraw[black] (0.1,0) circle (1.5pt) node[below=1pt, font=\small] {$z$};

        \node[black, font=\small, below left] at (0.30,-0.47) {$B(z, \varepsilon^{2\gamma})$};
    \end{scope}

    \begin{scope}[shift={(-1.9,1.7)}]
        \draw[thick, reddish] (0,0) circle (1.2cm);

        \draw[thick, reddish] (-1.2,0) arc (180:360:1.2cm and 0.25cm);
        \draw[reddish, dashed] (1.2,0) arc (0:180:1.2cm and 0.25cm);

        \filldraw[reddish] (0.2,-0.1) circle (1.5pt);
        \node[reddish, font=\large] at (0.35,-0.15) {$y$};

        \begin{scope}[shift={(-0.27, 0.58)}]
            \draw[thick, reddish] (0,0) circle (0.3cm);
            \draw[thick, reddish] (-0.25,-0.05) arc (200:340:0.28cm and 0.1cm);
            
            \fill[reddish, opacity=0.7] (0,0) circle (0.2cm);
            \filldraw[reddish] (0,0) circle (1.5pt);
        \end{scope}

        \draw[thick, reddish] (-0.25, 0.95) -- (-0.1, 1.3) node[right, font=\large] {$p$};

        \node[reddish, font=\large, left] at (-1.35,-0.6) {$B(y, \frac{1}{100}\varepsilon^\gamma)$};
    \end{scope}

\end{tikzpicture}
    \caption{The set up: $B(q,R)$}
    \label{fig:B(q,R)}
\end{figure}

We will derive a contradiction on $\partial B(q,R)$. We first provide a heuristic for the argument. We begin by using the fact that $\partial B(q,R)$ has large intersection with $\Omega^-$ near $p^-$, and Proposition \ref{p:FTV thrm c} to deduce that $H^+(q,R)$ has a large intersection with $\overline{H}^c$. In fact, we show $H^+(q,R)\cap \overline{H}^c$ contains a large $\mathcal{H}^{n-2}$-measure subset of $\Sigma_R:=\partial B(0,1)\cap \partial B(q,R)$. Let's call $A\subset \Sigma_R$ the large portion of $\Sigma_R$ that is contained in $H^+(q,R)\cap \overline{H}^c$ and also far from their boundaries, i.e. $d(A,\partial H),\,d(A,\partial H^+(q,R))$ are big. Then, from Lemma \ref{lem: no sets of pos capacity far from BAHP}, the capacity of $A\cap \overline{\Omega^j}$ should be small for $j=+,-$, but, for some choice of $+,-$, this is a contradiction with Lemma \ref{lem: relative cap to cap}. We now make this precise.

Let $H_R:=H^+(q,R)$ denote the hemisphere on $\partial B(q,r)$ such that 
\begin{equation*}
    \varepsilon_n(q,R)\leq \frac{1}{R^{n-1}} \mathcal{H}^{n-1}\left([H_R\setminus \Omega^+] \cup [\overline{H_R}^c \setminus \Omega^-] \right)\leq \varepsilon^{1/2},
\end{equation*}
and since $R\approx \sqrt{2}$, 
\begin{equation}\label{e:eps for q}
    \mathcal{H}^{n-1}\left([H_R\setminus \Omega^+] \cup [\overline{H_R}^c \setminus \Omega^-] \right)\lesssim_n \varepsilon^{1/2}.
\end{equation}

 Note that $H_R=\{w\in \partial B(q,R):\, (w-q)\cdot e_R>0\}$, for some $e_R\in \mathbb{S}^{n-1}$.

 We let $\Sigma_R:=\partial B(0,1)\cap \partial B(q,R)$, which is an $(n-2)$-sphere with center $tq$, where $t=1-\frac{R^2}{2}$ and where $\rho_R\approx R$, where $\rho_R$ denotes the radius of $\Sigma_R$.

 \noindent\textbf{Step 1:} Quantify the rotation between $H$ and $H_R$.

We first show that $(p^--q)\cdot e_R\leq\frac{2}{100M}\varepsilon^{\gamma}$. If not  
then for $z\in B(p^-,\frac{1}{100M}\varepsilon^{\gamma})\cap \partial B(q,R)$, 
\begin{align*}
    (z-q)\cdot e_R=(z-p^-)\cdot e_R+(p^--q)\cdot e_R\geq -\frac{1}{100M}\varepsilon^{\gamma}+\frac{2}{100M}\varepsilon^{\gamma}>0.
\end{align*}
Thus, $B(p^-,\frac{1}{100M}\varepsilon^{\gamma})\cap \partial B(q,R)\subset H_R\cap \Omega^-$, which contradicts \eqref{e:eps for q}. Thus $(p^--q)\cdot e_R<\frac{2}{100M}\varepsilon^{\gamma}$. Moreover, since $y\in H$, $d(y,\partial H)>\varepsilon^{\gamma}$ and $p^-\in B(y,\frac{1}{100}\varepsilon^{\gamma})$, we have $p^-\cdot e_n\geq C\varepsilon^{\gamma}$.

Thus, 
\begin{align*}
    2e_n\cdot e_R&=2-|e_n-e_R|^2\\
    &\leq 2-\left(|e_R-p^-|+|p^--e_n|\right)^2\\
    &\leq 2-\left(\frac{\pi}{2}-\frac{2}{100M}\varepsilon^{\gamma}-\frac{\pi}{2}+\varepsilon^{\gamma}\right)^2, 
\end{align*}
which gives 
\begin{equation}\label{e: e_q e_n}
    e_n\cdot e_R \leq 1-C\varepsilon^{2\gamma}.
\end{equation}

 Moreover, by construction of $q$ and the triangle inequality, 
\begin{equation}\label{e:q e_n}
    |q\cdot e_n| = |(q-z)\cdot e_n| \leq \varepsilon^{16\gamma}
\end{equation}

\noindent\textbf{Step 2:} We now find a point $w\in \Sigma_R$ such that either $w\in \overline{H_R}^c\cap H$ or  $w\in H_R\cap \overline{H}^c$, and 
\begin{equation}\label{e:wdef}
    d(w,\partial H)\gtrsim \varepsilon^{8\gamma} \qquad \text{and} \qquad d(w,\partial H_R)\gtrsim \varepsilon^{8\gamma}.
\end{equation}

We first set up some notation. Let $v\in \mathbb{S}^{n-1}$ such that $v\cdot q=0$ and 
\begin{equation}\label{e: v e_n}
   v\cdot e_n \geq 1-\frac{1}{2}\varepsilon^{32\gamma}.  
\end{equation}

Such $v$ exists by \eqref{e:q e_n}. Let $E:=\{e\in \mathbb{S}^{n-1}:\, e\cdot q=e\cdot v=0\}$. Thus, there exists $a,b,c\in \mathbb{R}$ such that 
$$
e_R=aq+bv+ce, \qquad \text{where} \quad a^2+b^2+c^2=1,\,\, \text{and}\,\, e\in E.
$$
Observe that for any $e\in E$, since $ e\cdot e_n= e\cdot (e_n-v)$, where 
\begin{align*}
    |e\cdot (e_n-v)|&\leq |e_n-v|=\sqrt{2-2e_n\cdot v}\leq \sqrt{2-2(1-\frac{1}{2}\varepsilon^{32\gamma})}=\varepsilon^{16\gamma},
\end{align*}
we have 
\begin{equation}\label{e: e e_n}
    -\varepsilon^{16\gamma}\leq e\cdot e_n\leq \varepsilon^{16\gamma}.
\end{equation}
Moreover, using that $|e_n-v|\leq \varepsilon^{16\gamma}$, which is computed above, we obtain
\begin{equation}\label{e: b approx e_r cdot e_n}
e_R\cdot e_n-\varepsilon^{16\gamma}\leq e_R\cdot v\leq e_R\cdot e_n + \varepsilon^{16\gamma}.
\end{equation}

We will write our desired $w\in \Sigma_R$ satisfying \eqref{e:wdef} as 
\begin{equation*}
    w=tq\pm d\rho_Rv\pm\sqrt{1-d^2} \,\rho_Re, 
\end{equation*}
for some choice of $+$ and $-$, and some $0< d\leq \frac{1}{\rho_R}$. These choices depend on relationship of $e_R$ to $e_n$ and $q$. Thus we resort to case analysis.

\begin{figure}[htbp]
    \centering
    \begin{subfigure}[b]{0.48\textwidth}
        \centering
        \begin{tikzpicture}[>=Stealth, scale=.8]

  \coordinate (O) at (1.2, 0.35);
  \def\RO{1.85}

  \coordinate (Q) at (-1.3, -0.25);
  \def\RQ{2.75}

  \def\angleER{-52}
  \pgfmathsetmacro{\angleERPerp}{\angleER + 90}

  \path let \p1 = ($(Q)-(O)$) in 
    \pgfextra{
      \pgfmathsetmacro{\angleQ}{atan2(\y1,\x1)} 
      \pgfmathsetmacro{\angleQPerp}{\angleQ + 90}
      \global\let\angleQ\angleQ
      \global\let\angleQPerp\angleQPerp
    };

  \path[name path=sphereQ] (Q) circle (\RQ);
  \path[name path=sphereO] (O) circle (\RO);

  \path[name intersections={of=sphereQ and sphereO, by={P1, P2}}];
  
  \coordinate (M) at ($(P1)!0.5!(P2)$);


  \begin{scope}
    \clip (Q) circle (\RQ);
    \begin{scope}[shift={(Q)}, rotate=\angleERPerp]
      \clip (\RQ, 0) arc (0:180:\RQ\space and 0.65) -- (-\RQ, -3.5) -- (\RQ, -3.5) -- cycle;
      \fill[red!20, opacity=0.65] (0,0) circle (\RQ);
      \fill[pattern=north west lines, pattern color=red!45, opacity=0.7] (0,0) circle (\RQ);
    \end{scope}
  \end{scope}

  \begin{scope}
    \clip (Q) circle (\RQ);
    \begin{scope}[shift={(Q)}, rotate=\angleERPerp]
      \clip (-\RQ, 0) arc (180:360:\RQ\space and 0.65) -- (\RQ, -3.5) -- (-\RQ, -3.5) -- cycle;
      \fill[shade, shading=radial, inner color=red!40, outer color=red!80, opacity=0.6] (0,0) circle (\RQ);
      \fill[pattern=north west lines, pattern color=red!90!black, opacity=0.55] (0,0) circle (\RQ);
    \end{scope}
  \end{scope}

  \begin{scope}
    \clip (O) circle (\RO);
    \begin{scope}[shift={(O)}]

    \clip (-\RO, 0) arc (180:360:\RO\space and {.23*\RO}) -- (\RO, \RO) -- (-\RO, \RO) -- cycle;
      \fill[pattern=north west lines, pattern color=black!35, opacity=0.75] (0,0) circle (\RO);
    \end{scope}
  \end{scope}

  \draw[thick, black!85] (Q) circle (\RQ);
  \draw[thick, black!85] (O) circle (\RO);

  \draw[gray!70, thick] (O) ++(-\RO, 0) arc (180:360:\RO\space and 0.4);
  \draw[gray!70, dashed, thick] (O) ++(\RO, 0) arc (0:180:\RO\space and 0.4);

  \begin{scope}[shift={(Q)}, rotate=\angleERPerp]
    \draw[red!85!black, ultra thick] (-\RQ, 0) arc (180:360:\RQ\space and 0.65);
    \draw[red!85!black, thick, dotted] (\RQ, 0) arc (0:180:\RQ\space and 0.65);
  \end{scope}

  \path let \p1 = ($(P1)-(M)$), \n1 = {veclen(\x1,\y1)} in
    \pgfextra{
      \pgfmathsetmacro{\radSigma}{\n1/1cm}
      \pgfmathsetmacro{\minorSigma}{0.42 * \radSigma}
      \global\let\radSigma\radSigma
      \global\let\minorSigma\minorSigma
    };

  \begin{scope}[shift={(M)}, rotate=\angleQPerp]
    \draw[blue!85!black, ultra thick] (\radSigma, 0) arc (0:-180:{.9*\radSigma\space+.18} and {.8*\minorSigma});
    \draw[blue!85!black, thick, dashed] (\radSigma, 0) arc (0:180:{.9*\radSigma\space+.15} and {.9*\minorSigma-.4});
  \end{scope}


  \fill[black] (Q) circle (2.2pt) node[below left=2pt] {$q$};
  \fill[black] (O) circle (2.2pt) node[below right=2pt] {$0$};
  \fill[black] ($(M)+(-.5,-.47)$) circle (2.5pt) node[below right=3.5pt] {$w$};

  \draw[->, thick, >=Stealth] (O) -- ++(0, 1.85) node[above=2pt] {$e_n$};

  \begin{scope}[shift={(Q)}]
    \draw[->, red!90!black, ultra thick, >=Stealth] (0,0) -- (\angleER:\RQ) node[below right=1pt] {$e_R$};
  \end{scope}

  \node[blue!85!black, above=3pt] at (P1) {$\Sigma_R$};
  \node[font=\large] at ($(O) + (1.1, 1.1)$) {$H$};
  \node[red!85!black, font=\large] at ($(Q) + (-2.7, -2.2)$) {$H_R$};

  \node[below left] at ($(Q) +(-1,2) + (-130:\RQ)$) {$\partial B(q,R)$};
  \node[below right] at ($(O) + (-40:\RO)$) {$\partial B(0,1)$};

\end{tikzpicture}
        \caption{Case 1 example}
        \label{fig:case 1}
    \end{subfigure}
    \hfill 
    \begin{subfigure}[b]{0.48\textwidth}
        \centering
         \begin{tikzpicture}[>=Stealth, scale=.8]

  \coordinate (O) at (1.2, 0.35);
  \def\RO{1.85}

  \coordinate (Q) at (-1.3, -0.25);
  \def\RQ{2.75}

  \def\angleER{-103}
  \pgfmathsetmacro{\angleERPerp}{\angleER + 90}

  \path let \p1 = ($(Q)-(O)$) in 
    \pgfextra{
      \pgfmathsetmacro{\angleQ}{atan2(\y1,\x1)} 
      \pgfmathsetmacro{\angleQPerp}{\angleQ + 90}
      \global\let\angleQ\angleQ
      \global\let\angleQPerp\angleQPerp
    };

  \path[name path=sphereQ] (Q) circle (\RQ);
  \path[name path=sphereO] (O) circle (\RO);

  \path[name intersections={of=sphereQ and sphereO, by={P1, P2}}];
  
  \coordinate (M) at ($(P1)!0.5!(P2)$);


  \begin{scope}
    \clip (Q) circle (\RQ);
    \begin{scope}[shift={(Q)}, rotate=\angleERPerp]
      \clip (\RQ, 0) arc (0:180:\RQ\space and 0.65) -- (-\RQ, -3.5) -- (\RQ, -3.5) -- cycle;
      \fill[red!20, opacity=0.65] (0,0) circle (\RQ);
      \fill[pattern=north west lines, pattern color=red!45, opacity=0.7] (0,0) circle (\RQ);
    \end{scope}
  \end{scope}

  \begin{scope}
    \clip (Q) circle (\RQ);
    \begin{scope}[shift={(Q)}, rotate=\angleERPerp]
      \clip (-\RQ, 0) arc (180:360:{.8*\RQ\space+.55} and {.8*0.65}) -- (\RQ, -3.5) -- (-\RQ, -3.5) -- cycle;
      \fill[shade, shading=radial, inner color=red!40, outer color=red!80, opacity=0.6] (0,0) circle (\RQ);
      \fill[pattern=north west lines, pattern color=red!90!black, opacity=0.55] (0,0) circle (\RQ);
    \end{scope}
  \end{scope}

  \begin{scope}
    \clip (O) circle (\RO);
    \begin{scope}[shift={(O)}]

    \clip (-\RO, 0) arc (180:360:\RO\space and {.23*\RO}) -- (\RO, \RO) -- (-\RO, \RO) -- cycle;
      \fill[pattern=north west lines, pattern color=black!35, opacity=0.75] (0,0) circle (\RO);
    \end{scope}
  \end{scope}

  \draw[thick, black!85] (Q) circle (\RQ);
  \draw[thick, black!85] (O) circle (\RO);

  \draw[gray!70, thick] (O) ++(-\RO, 0) arc (180:360:\RO\space and 0.4);
  \draw[gray!70, dashed, thick] (O) ++(\RO, 0) arc (0:180:\RO\space and 0.4);

  \begin{scope}[shift={(Q)}, rotate=\angleERPerp]
    \draw[red!85!black, ultra thick] (-\RQ, 0) arc (180:360:{.8*\RQ\space+.55} and {.8*0.65});
    \draw[red!85!black, thick, dotted] (\RQ, 0) arc (0:180:\RQ\space and 0.65);
  \end{scope}

  \path let \p1 = ($(P1)-(M)$), \n1 = {veclen(\x1,\y1)} in
    \pgfextra{
      \pgfmathsetmacro{\radSigma}{\n1/1cm}
      \pgfmathsetmacro{\minorSigma}{0.42 * \radSigma}
      \global\let\radSigma\radSigma
      \global\let\minorSigma\minorSigma
    };

  \begin{scope}[shift={(M)}, rotate=\angleQPerp]
    \draw[blue!85!black, ultra thick] (\radSigma, 0) arc (0:-180:{.9*\radSigma\space+.18} and {.8*\minorSigma});
    \draw[blue!85!black, thick, dashed] (\radSigma, 0) arc (0:180:{.9*\radSigma\space+.15} and {.9*\minorSigma-.4});
  \end{scope}


  \fill[black] (Q) circle (2.2pt) node[below left=2pt] {$q$};
  \fill[black] (O) circle (2.2pt) node[below right=2pt] {$0$};
  \fill[black] ($(M)+(.09,.72)$) circle (2.5pt) node[left=3.5pt] {$w$};

  \draw[->, thick, >=Stealth] (O) -- ++(0, 1.85) node[above=2pt] {$e_n$};

  \begin{scope}[shift={(Q)}]
    \draw[->, red!90!black, ultra thick, >=Stealth] (0,0) -- (\angleER:\RQ) node[below right=1pt] {$e_R$};
  \end{scope}

  \node[blue!85!black, above=3pt] at (P1) {$\Sigma_R$};
  \node[font=\large] at ($(O) + (1.1, 1.1)$) {$H$};
  \node[red!85!black, font=\large] at ($(Q) + (-2.7, -2.2)$) {$H_R$};

  \node[below left] at ($(Q) +(-1,2) + (-130:\RQ)$) {$\partial B(q,R)$};
  \node[below right] at ($(O) + (-40:\RO)$) {$\partial B(0,1)$};

\end{tikzpicture}
        \caption{Case 2 example}
        \label{fig:case 2}
    \end{subfigure}
    
    \caption{Examples for Cases 1 \& 2}
    \label{fig:cases12}
\end{figure}

\medskip

 \noindent\textbf{Case 1:} Suppose $q\cdot e_R \leq -\varepsilon^{4\gamma}$.\\
 Let $d=\varepsilon^{8\gamma}$, so, 
 $$w:=tq-\varepsilon^{8\gamma} \rho_Rv+\mathrm{sign}(c)\sqrt{1-\varepsilon^{16\gamma}}\rho_Re.$$ Note $\mathrm{sign}(c) = c/|c|$ unless $c = 0$ in which case we can choose $\mathrm{sign}(c) = \pm 1$ arbitrarily. 
 
 We first show that $w\in H_R$. Recall that $-R^2/2 = t-1<0$, then from \eqref{e: b approx e_r cdot e_n}, 
 \begin{align*}
     (w-q)\cdot e_R&= (t-1)q\cdot e_R-\varepsilon^{8\gamma} \rho_Rv\cdot e_R + \sqrt{1-\varepsilon^{16\gamma}}\rho_R|c|\\
     &\geq \frac{R^2}{2}\varepsilon^{4\gamma}-\rho_R\varepsilon^{8\gamma} \\
     &\geq \frac{1}{4}\rho_R\varepsilon^{4\gamma},
 \end{align*}
 for $\varepsilon$ small enough.

 Moreover, $w\in \overline{H}^c$. By \eqref{e:q e_n}, \eqref{e: v e_n}, and \eqref{e: e e_n},
 \begin{align*}
     w\cdot e_n&=tq\cdot e_n-\varepsilon^{8\gamma} \rho_R v\cdot e_n +\mathrm{sign}(c)\sqrt{1-\varepsilon^{16\gamma}}\rho_Re\cdot e_n\\
     &\leq t\varepsilon^{16\gamma}-\rho_R\varepsilon^{8\gamma}(1-\frac{1}{2}\varepsilon^{16\gamma})+\rho_R\varepsilon^{16\gamma}\\
     &\lesssim -\frac{1}{2}\rho_R\varepsilon^{8\gamma},
 \end{align*}
 for $\varepsilon$ small enough. This concludes \textbf{Case 1}.

 \vspace{\baselineskip}

 \noindent\textbf{Case 2:} Suppose $q\cdot e_R\geq \varepsilon^{4\gamma}$.\\
 
   Let $d=\varepsilon^{8\gamma}$, so 
  $$
  w:=tq+\rho_R\varepsilon^{8\gamma}v-\mathrm{sign}(c)\sqrt{1-\varepsilon^{16\gamma}}\, \rho_Re,
  $$ 
   where again $\mathrm{sign}(c) = c/|c|$, unless $c = 0$ in which case we can choose $\mathrm{sign}(c) = \pm 1$ arbitrarily.

  We now show that $w\in \overline{H_R}^c$. Indeed,
 \begin{align*}
     (w-q)\cdot e_R&= (t-1)q\cdot e_R+\rho_R\varepsilon^{8\gamma}v\cdot e_R-\mathrm{sign}(c)\sqrt{1-\varepsilon^{16\gamma}}\, \rho_Re\cdot e_R\\
     &\leq -\frac{R^2}{2}\varepsilon^{4\gamma}+\rho_R\varepsilon^{8\gamma}\\
      &< -\varepsilon^{4\gamma}\rho_R,
 \end{align*}
 for $\varepsilon$ small enough. Moreover, $w\in H$. From \eqref{e:q e_n}, \eqref{e: v e_n}, and \eqref{e: e e_n}:
 \begin{align*}
     w\cdot e_n
     &=tq\cdot e_n+\rho_R\varepsilon^{8\gamma}v\cdot e_n-\mathrm{sign}(c)\sqrt{1-\varepsilon^{16\gamma}}\rho_Re\cdot e_n\\
     &\geq -t\varepsilon^{16\gamma}+\rho_R\varepsilon^{8\gamma}(1-\frac{1}{2}\varepsilon^{32\gamma})-\sqrt{1-\varepsilon^{2\gamma}}\rho_R\varepsilon^{16\gamma}\\
     &> \frac{1}{2}\rho_R\varepsilon^{8\gamma}, 
 \end{align*}
for $\varepsilon$ small enough. This concludes \textbf{Case 2}.
 \vspace{\baselineskip}
 Observe that now in the remaining cases, we can assume $|q\cdot e_R|\leq \varepsilon^{4\gamma}$.\\

 \begin{figure}[htbp]
    \centering
    \begin{subfigure}[b]{0.48\textwidth}
        \centering
    \begin{tikzpicture}[>=Stealth, scale=.8]

  \coordinate (O) at (1.2, 0.35);
  \def\RO{1.85}

  \coordinate (Q) at (-1.3, -0.25);
  \def\RQ{2.75}

  \def\angleER{-45}
  \pgfmathsetmacro{\angleERPerp}{\angleER + 90}

  \path let \p1 = ($(Q)-(O)$) in 
    \pgfextra{
      \pgfmathsetmacro{\angleQ}{atan2(\y1,\x1)} 
      \pgfmathsetmacro{\angleQPerp}{\angleQ + 90}
      \global\let\angleQ\angleQ
      \global\let\angleQPerp\angleQPerp
    };

  \path[name path=sphereQ] (Q) circle (\RQ);
  \path[name path=sphereO] (O) circle (\RO);

  \path[name intersections={of=sphereQ and sphereO, by={P1, P2}}];
  
  \coordinate (M) at ($(P1)!0.5!(P2)$);


  \begin{scope}
    \clip (Q) circle (\RQ);
    \begin{scope}[shift={(Q)}, rotate=\angleERPerp]
      \clip (\RQ, 0) arc (0:180:\RQ\space and 0.65) -- (-\RQ, -3.5) -- (\RQ, -3.5) -- cycle;
      \fill[red!20, opacity=0.65] (0,0) circle (\RQ);
      \fill[pattern=north west lines, pattern color=red!45, opacity=0.7] (0,0) circle (\RQ);
    \end{scope}
  \end{scope}

  \begin{scope}
    \clip (Q) circle (\RQ);
    \begin{scope}[shift={(Q)}, rotate=\angleERPerp]
      \clip (-\RQ, 0) arc (180:360:{.8*\RQ\space+.55} and {.8*0.65}) -- (\RQ, -3.5) -- (-\RQ, -3.5) -- cycle;
      \fill[shade, shading=radial, inner color=red!40, outer color=red!80, opacity=0.6] (0,0) circle (\RQ);
      \fill[pattern=north west lines, pattern color=red!90!black, opacity=0.55] (0,0) circle (\RQ);
    \end{scope}
  \end{scope}

  \begin{scope}
    \clip (O) circle (\RO);
    \begin{scope}[shift={(O)}]

    \clip (-\RO, 0) arc (180:360:\RO\space and {.23*\RO}) -- (\RO, \RO) -- (-\RO, \RO) -- cycle;
      \fill[pattern=north west lines, pattern color=black!35, opacity=0.75] (0,0) circle (\RO);
    \end{scope}
  \end{scope}

  \draw[thick, black!85] (Q) circle (\RQ);
  \draw[thick, black!85] (O) circle (\RO);

  \draw[gray!70, thick] (O) ++(-\RO, 0) arc (180:360:\RO\space and 0.4);
  \draw[gray!70, dashed, thick] (O) ++(\RO, 0) arc (0:180:\RO\space and 0.4);

  \begin{scope}[shift={(Q)}, rotate=\angleERPerp]
    \draw[red!85!black, ultra thick] (-\RQ, 0) arc (180:360:{.8*\RQ\space+.55} and {.8*0.65});
    \draw[red!85!black, thick, dotted] (\RQ, 0) arc (0:180:\RQ\space and 0.65);
  \end{scope}

  \path let \p1 = ($(P1)-(M)$), \n1 = {veclen(\x1,\y1)} in
    \pgfextra{
      \pgfmathsetmacro{\radSigma}{\n1/1cm}
      \pgfmathsetmacro{\minorSigma}{0.42 * \radSigma}
      \global\let\radSigma\radSigma
      \global\let\minorSigma\minorSigma
    };

  \begin{scope}[shift={(M)}, rotate=\angleQPerp]
    \draw[blue!85!black, ultra thick] (\radSigma, 0) arc (0:-180:{.9*\radSigma\space+.18} and {.8*\minorSigma});
    \draw[blue!85!black, thick, dashed] (\radSigma, 0) arc (0:180:{.9*\radSigma\space+.15} and {.9*\minorSigma-.4});
  \end{scope}


  \fill[black] (Q) circle (2.2pt) node[below left=2pt] {$q$};
  \fill[black] (O) circle (2.2pt) node[below right=2pt] {$0$};
  \fill[black] ($(M)+(.4,-\RO+.15)$) circle (2.5pt) node[below right=3.5pt] {$w$};

  \draw[->, thick, >=Stealth] (O) -- ++(0, 1.85) node[above=2pt] {$e_n$};

  \begin{scope}[shift={(Q)}]
    \draw[->, red!90!black, ultra thick, >=Stealth] (0,0) -- (\angleER:\RQ) node[below right=1pt] {$e_R$};
  \end{scope}

  \node[blue!85!black, above=3pt] at (P1) {$\Sigma_R$};
  \node[font=\large] at ($(O) + (1.1, 1.1)$) {$H$};
  \node[red!85!black, font=\large] at ($(Q) + (-2.7, -2.2)$) {$H_R$};

  \node[below left] at ($(Q) +(-1,2) + (-130:\RQ)$) {$\partial B(q,R)$};
  \node[below right] at ($(O) + (-40:\RO)$) {$\partial B(0,1)$};

\end{tikzpicture}
        \caption{Case 3 example}
        \label{fig:case 3}
    \end{subfigure}
    \hfill 
    \begin{subfigure}[b]{0.48\textwidth}
        \centering
         \begin{tikzpicture}[>=Stealth, scale=.8]

  \coordinate (O) at (1.2, 0.35);
  \def\RO{1.85}

  \coordinate (Q) at (-1.3, -0.25);
  \def\RQ{2.75}

  \def\angleER{118}
  \pgfmathsetmacro{\angleERPerp}{\angleER + 90}

  \path let \p1 = ($(Q)-(O)$) in 
    \pgfextra{
      \pgfmathsetmacro{\angleQ}{atan2(\y1,\x1)} 
      \pgfmathsetmacro{\angleQPerp}{\angleQ + 90}
      \global\let\angleQ\angleQ
      \global\let\angleQPerp\angleQPerp
    };

  \path[name path=sphereQ] (Q) circle (\RQ);
  \path[name path=sphereO] (O) circle (\RO);

  \path[name intersections={of=sphereQ and sphereO, by={P1, P2}}];
  
  \coordinate (M) at ($(P1)!0.5!(P2)$);


  \begin{scope}
    \clip (Q) circle (\RQ);
    \begin{scope}[shift={(Q)}, rotate=\angleERPerp]
      \clip (\RQ, 0) arc (0:180:\RQ\space and 0.65) -- (-\RQ, -3.5) -- (\RQ, -3.5) -- cycle;
      \fill[shade, shading=radial, inner color=red!40, outer color=red!80, opacity=0.6] (0,0) circle (\RQ);
      \fill[pattern=north west lines, pattern color=red!90!black, opacity=0.55] (0,0) circle (\RQ);
    \end{scope}
  \end{scope}

  \begin{scope}
    \clip (Q) circle (\RQ);
    \begin{scope}[shift={(Q)}, rotate=\angleERPerp]
      \clip (-\RQ, 0) arc (180:360:{.8*\RQ\space+.55} and {.8*0.65}) -- (\RQ, -3.5) -- (-\RQ, -3.5) -- cycle;
      \fill[red!20, opacity=0.65] (0,0) circle (\RQ);
      \fill[pattern=north west lines, pattern color=red!45, opacity=0.7] (0,0) circle (\RQ);
    \end{scope}
  \end{scope}

  \begin{scope}
    \clip (O) circle (\RO);
    \begin{scope}[shift={(O)}]

    \clip (-\RO, 0) arc (180:360:\RO\space and {.23*\RO}) -- (\RO, \RO) -- (-\RO, \RO) -- cycle;
      \fill[pattern=north west lines, pattern color=black!35, opacity=0.75] (0,0) circle (\RO);
    \end{scope}
  \end{scope}

  \draw[thick, black!85] (Q) circle (\RQ);
  \draw[thick, black!85] (O) circle (\RO);

  \draw[gray!70, thick] (O) ++(-\RO, 0) arc (180:360:\RO\space and 0.4);
  \draw[gray!70, dashed, thick] (O) ++(\RO, 0) arc (0:180:\RO\space and 0.4);

  \begin{scope}[shift={(Q)}, rotate=\angleERPerp]
    \draw[red!85!black, thick, dotted] (-\RQ, 0) arc (180:360:{.8*\RQ\space+.55} and {.8*0.65});
    \draw[red!85!black, ultra thick] (\RQ, 0) arc (0:180:\RQ\space and 0.65);
  \end{scope}

  \path let \p1 = ($(P1)-(M)$), \n1 = {veclen(\x1,\y1)} in
    \pgfextra{
      \pgfmathsetmacro{\radSigma}{\n1/1cm}
      \pgfmathsetmacro{\minorSigma}{0.42 * \radSigma}
      \global\let\radSigma\radSigma
      \global\let\minorSigma\minorSigma
    };

  \begin{scope}[shift={(M)}, rotate=\angleQPerp]
    \draw[blue!85!black, ultra thick] (\radSigma, 0) arc (0:-180:{.9*\radSigma\space+.18} and {.8*\minorSigma});
    \draw[blue!85!black, thick, dashed] (\radSigma, 0) arc (0:180:{.9*\radSigma\space+.15} and {.9*\minorSigma-.4});
  \end{scope}


  \fill[black] (Q) circle (2.2pt) node[below left=2pt] {$q$};
  \fill[black] (O) circle (2.2pt) node[below right=2pt] {$0$};
  \fill[black] ($(M)+(.09,.70)$) circle (2.5pt) node[left=3.5pt] {$w$};

  \draw[->, thick, >=Stealth] (O) -- ++(0, 1.85) node[above=2pt] {$e_n$};

  \begin{scope}[shift={(Q)}]
    \draw[->, red!90!black, ultra thick, >=Stealth] (0,0) -- (\angleER:\RQ) node[above left=1pt] {$e_R$};
  \end{scope}

  \node[blue!85!black, above=3pt] at (P1) {$\Sigma_R$};
  \node[font=\large] at ($(O) + (1.1, 1.1)$) {$H$};
  \node[red!85!black, font=\large] at ($(Q) + (-2.7, -2.2)$) {$H_R$};

  \node[below left] at ($(Q) +(-1,2) + (-130:\RQ)$) {$\partial B(q,R)$};
  \node[below right] at ($(O) + (-40:\RO)$) {$\partial B(0,1)$};

\end{tikzpicture}
        \caption{Case 4 example}
        \label{fig:case 4}
    \end{subfigure}
    
    \caption{Examples for Cases 3 \& 4}
    \label{fig:cases34}
\end{figure}

 \noindent\textbf{Case 3:} Suppose $e_R\cdot e_n \leq -\hat{C}\varepsilon^{4\gamma}$ for some $\hat{C} > 0$ large enough independent of $\varepsilon$. Recall also $|q\cdot e_R|\leq \varepsilon^{4\gamma}$.\\

  Let $d= -1$, in which case 
  $$w:=tq-\rho_Rv.$$ 

 We now show that $w\in H_R$. Recalling that $t-1<0$ and from \eqref{e: b approx e_r cdot e_n},
 \begin{align*}
     (w-q)\cdot e_R&= (t-1)q\cdot e_R-\rho_Rv\cdot e_R\\
     &\geq -\frac{R^2}{2}\varepsilon^{4\gamma} +C\rho_R\varepsilon^{4\gamma}\\
     &>\rho_R\varepsilon^{4\gamma},
 \end{align*}
 for $\varepsilon$ small enough and $\hat{C}$ large enough so that $\hat{C}\rho_R - \frac{R^2}{2} \geq \rho_R$.

 Moreover, $w\in \overline{H}^c$. Again from \eqref{e:q e_n} and \eqref{e: v e_n},
 \begin{align*}
     w\cdot e_n&=tq\cdot e_n-\rho_R v\cdot e_n\leq t\varepsilon^{16\gamma}-\rho_R(1-\frac{1}{2}\varepsilon^{32\gamma})
     \leq -\frac{1}{2}\rho_R,
 \end{align*}
 for $\varepsilon$ small enough. This concludes \textbf{Case 3}.

\noindent\textbf{Case 4:} Suppose $-\hat{C}\varepsilon^{4\gamma}\leq e_R\cdot e_n\leq 1-C\varepsilon^{2\gamma}$, and that $|q\cdot e_R|\leq \varepsilon^{4\gamma}$. Note that this is the final case by \eqref{e: e_q e_n}.\\

Recall $e_R = aq + bv + ce$ and $a^2 + b^2 + c^2 =1$. Since $|a| \leq \varepsilon^{4\gamma}$ and $|b| \leq 1-\frac{C}{2}\varepsilon^{2\gamma}$, for $\varepsilon > 0$ small enough, this implies that $|c| \geq \frac{\sqrt{C}}{2}\varepsilon^{\gamma}$. 

We can define 
$$
w:= tq + \varepsilon^{2\gamma}\rho_Rv - \mathrm{sign}(c)\sqrt{1-\varepsilon^{4\gamma}}\rho_Re.
$$

We first show that $w\in \overline{H}_R^c$. Indeed 

\begin{align*}
    (w-q)\cdot e_R =& (t-1)q\cdot e_R + \varepsilon^{2\gamma}\rho_Rv\cdot e_R - \mathrm{sign}(c)\sqrt{1-\varepsilon^{4\gamma}}\rho_Re\cdot e_R\\
    \leq& \frac{R^2}{2}\varepsilon^{4\gamma} + \rho_R\varepsilon^{2\gamma} - \frac{1}{2}\rho_R|c|\\
    \leq& -\rho_R\varepsilon^{\gamma},
\end{align*}
as long as $\varepsilon > 0$ is small enough. 

On the other hand we have $w\in H$ as 

\begin{align*}
    w\cdot e_n =& tq\cdot e_n + C\varepsilon^{2\gamma}\rho_Rv\cdot e_n-\mathrm{sign}(c)\sqrt{1-C^2\varepsilon^{\gamma}}\rho_Re\cdot e_n\\
    \geq& -\varepsilon^{16\gamma} +\frac{C}{2}\varepsilon^{2\gamma} \rho_R- \rho_R\varepsilon^{16\gamma}\\
    \geq& C\rho_R\varepsilon^{2\gamma},
\end{align*} as long as $\varepsilon > 0$ is small enough.

\medskip

\noindent\textbf{Step 3, the contradiction:} Now, we aim to apply Lemma \ref{lem: relative cap to cap}. Let $\tau\in (0,\frac{1}{4})$ be a small dimensional constant. Observe that 
$$
\mathcal{H}^{n-2}(\overline{\Omega}^{j}\cap B(w,\tau\varepsilon^{8\gamma}))\geq \left(\tau\varepsilon^{8\gamma}\right)^{n-2},
$$
for either $j=+$ or $j=-$. Suppose without loss of generality that $j=+$. We now apply Lemma \ref{lem: relative cap to cap} with $A:= \overline{\Omega}^+\cap B(w,\tau \varepsilon^{8\gamma})$ and $\Sigma=\Sigma_R\cap B(w,\frac{1}{2}\varepsilon^{8\gamma})$, a smooth $(n-2)$-hypersurface satisfying $d(\Sigma,\partial H)\gtrsim \varepsilon^{8\gamma}$ and $d(\Sigma,\partial H_R)\gtrsim \varepsilon^{8\gamma}$. Then for $x_0\in A$, 
$$
\mathcal{H}^{n-2}(A\cap B(x_0,2\tau\varepsilon^{8\gamma}))\geq\mathcal{H}^{n-2}(A)\geq \left(\tau\varepsilon^{8\gamma}\right)^{n-2},
$$
and so there exists a constant $C$, depending on dimension so that 
\begin{equation}\label{e: contradiction1}
    \varepsilon^{2(8\gamma)}\varepsilon^{8\gamma(n-1)}\lesssim_{\tau}C \mathrm{Cap}_{H}(A) \qquad \text{and} \qquad \varepsilon^{2(8\gamma)}\varepsilon^{8\gamma(n-1)}\lesssim_{\tau}C \mathrm{Cap}_{\,\overline{H_R}\,^c}(A) .
\end{equation}
But, from Lemma \ref{lem: no sets of pos capacity far from BAHP}, the fact that $8n\gamma \leq 1/12$ and the fact that $A\subset \overline{\Omega^+}\cap H_{\varepsilon^{8\gamma}}\subset \overline{\Omega^+}\cap H_{\varepsilon^{8n\gamma}}$, we have
\begin{equation}\label{e: contradiction2}
  \mathrm{Cap}_{\,\overline{H_R}\,^c}(A)\leq \mathrm{Cap}_{\overline{H_R}\,^c}(\overline{\Omega^+}\cap H_{\varepsilon^{8\gamma}})\leq\mathrm{Cap}_{\overline{H_R}\,^c}(\overline{\Omega^+}\cap H_{\varepsilon^{8n\gamma}}) <\varepsilon^{1/3}.  
\end{equation}

From \eqref{e: contradiction1} and \eqref{e: contradiction2} we obtain
\begin{equation*}
   \varepsilon^{1/25} \leq \varepsilon^{8\gamma(n+1)}\lesssim \mathrm{Cap}_{\,\overline{H_R}\,^c}(A) \lesssim \varepsilon^{1/3}, 
\end{equation*}
which is a contradiction. 

Thus there does not exist $y\in \Gamma$ with $d(y,\partial H)>\varepsilon^{\gamma}$. Thus
$$\begin{aligned} \{y\in \partial B(0,1)\mid (y-x) \cdot e_n \geq \varepsilon^{\gamma}\} \subset& \Omega^+, \qquad \text{and}\\
    \{y\in \partial B(0,1)\mid (y-x) \cdot e_n \leq -\varepsilon^{\gamma}\} \subset& \Omega^-, 
    \end{aligned}$$
as desired.
\end{proof}

In the specific case where $\lambda(x,r) \leq Cr^\alpha$ we have proven the following:

\begin{corollary}\label{c:ourinstance}
    Let $\Omega_1, \Omega_2 \subset \mathbb R^n$ and $C > 0, \alpha \in (0,1)$ and $r_0 > 0$ be such that for every $x\in \partial \Omega_1\cup \partial \Omega_2$ and $r < r_0$ we have $\lambda(x,r) \leq Cr^{\alpha}$.

    Then $\partial \Omega_1 = \partial \Omega_2 \equiv \Gamma$ and $$b\beta_\Gamma^s(x,r) \leq C'r^\gamma, \qquad \forall r < r_0/2$$ where $C',\gamma$ depend only on $r_0, C, \alpha, n$

    In fact more is true, borrowing the notation from Lemma \ref{lem:no sets of zero capactiy far from BAHP}, we have 

    \begin{equation}\label{e:reifdecayonspheres}\sup_{y\in \overline{\Omega^{\pm}} \cap \partial B(x,r)} \mathrm{dist}(y, H^{\pm}(x, r))\leq C'r^\gamma, \qquad \forall r < r_0/2, \end{equation}
    and 
    $$\begin{aligned} \{y\in \partial B(x,r)\mid (y-x) \cdot e_n \geq r^{\gamma+1}\} \subset& \Omega^+, \qquad \text{and}\\
    \{y\in \partial B(x_0,1)\mid (y-x) \cdot e_n \leq -r^{\gamma+1}\} \subset& \Omega^-.
    \end{aligned}$$
\end{corollary}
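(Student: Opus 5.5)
The corollary follows by chaining the results of this section and then rescaling. The plan has three steps.

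First, convert the hypothesis on $\lambda$ into the hypothesis of Lemma \ref{lem:existence of corkscrews}. By Proposition \ref{p:FTV thrm c}, $\varepsilon_n(x,r)^2\leq C\lambda(x,r)\leq C''r^\alpha$ for all $x\in\Gamma$ and $r<r_0$. Lemma \ref{lem:existence of corkscrews} then gives $\Omega_2=\mathbb R^n\setminus\overline{\Omega_1}$, so $\partial\Omega_1=\partial\Omega_2=\Gamma$. It also shows that $\Omega^+:=\Omega_1$ is a two-sided corkscrew domain, with constants $M$ and $r_0'$ depending only on $\alpha,r_0,C,n$. This settles the first claim.

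Second, pass to unit scale. Fix $x\in\Gamma$ and $r<r_0/2$ (shrinking $r_0$ further if needed so that $r_0\leq r_0'$ and $C(2r_0)^\alpha<\varepsilon_0$). Consider the rescaled sets $\tilde\Omega^\pm=(\Omega^\pm-x)/r$, with boundary $\tilde\Gamma$. Since $\lambda$ and $\varepsilon_n$ are scale invariant, for every $\tilde y\in\tilde\Gamma\cap B(0,2)$ and every $s\in(1/2,2)$ we get
\[\lambda_{\tilde\Omega}(\tilde y,s)=\lambda(x+r\tilde y,rs)\leq C(2r)^\alpha=:\varepsilon .\]
The corkscrew property is also scale invariant: it holds at scales below $r_0'/r\geq 1$, with the same $M$. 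Every scale used in the proof of Lemma \ref{lem:no sets of zero capactiy far from BAHP} is at most $2$, so those are covered, up to constant adjustments. Hence Lemma \ref{lem:no sets of zero capactiy far from BAHP} applies with $y_0=x_0=0$ and a fixed $\gamma<\frac{1}{200(n+1)}$. It yields complementary half-spheres on $\partial B(0,1)$ with
\[\sup_{\tilde\Omega^\pm\cap\partial B(0,1)}\mathrm{dist}(\cdot,H^\pm)\leq C\varepsilon^\gamma,\]
together with the containment of the two $\varepsilon^\gamma$-caps in $\tilde\Omega^\pm$.

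Third, scale back. Distances multiply by $r$ and $\varepsilon^\gamma\approx r^{\alpha\gamma}$, which gives \eqref{e:reifdecayonspheres} with exponent $\alpha\gamma$; after renaming, this is the $\gamma$ of the statement. The cap statement becomes $(y-x)\cdot e_n\geq r\cdot r^{\alpha\gamma}$, matching the stated $r^{\gamma+1}$ form up to the constant. For $b\beta^s_\Gamma$, take $L=x+e_n^\perp$.
\begin{itemize}
\item One-sided bound: points of $\Gamma\cap\partial B(x,r)$ lie in neither cap, so they are within $Cr^{1+\alpha\gamma}$ of $L$.
\item Reverse bound: a point $z\in L\cap\partial B(x,r)$ is a point of the equator of $\partial B(x,r)$. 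The arc of great circle through $z$ in the $e_n$ direction, of length about $r^{1+\alpha\gamma}$, joins $\Omega^+$ to $\Omega^-$, so it meets $\Gamma$.
\end{itemize}
Dividing by $r$ gives $b\beta^s_\Gamma(x,r)\leq C'r^{\alpha\gamma}$.

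The main obstacle is the second step: verifying that the hypotheses of Lemma \ref{lem:no sets of zero capactiy far from BAHP} hold uniformly after rescaling. Two things must be checked. The smallness $\lambda\leq\varepsilon$ is needed at all centers in $B(0,2)\cap\tilde\Gamma$, including the auxiliary points $q$ and the radii $R\approx\sqrt2$ used in the proof. The corkscrew constant must also not degenerate. Both follow from scale invariance once $r_0$ is small relative to the corkscrew scale, but I would track the radius range carefully to ensure all scales used lie in $(1/2,2)\cdot r$ and below $r_0$.
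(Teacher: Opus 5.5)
Your proposal is correct and follows the same route as the paper, which gives no separate proof. The paper obtains the corollary by combining Proposition \ref{p:FTV thrm c} and Lemma \ref{lem:existence of corkscrews} with a rescaled application of Lemma \ref{lem:no sets of zero capactiy far from BAHP} at $\varepsilon \approx r^\alpha$, exactly as you do; your connectedness argument for the reverse half of the $b\beta^s_\Gamma$ bound just fills in a detail the paper leaves implicit.
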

\begin{figure}
    \centering
\begin{tikzpicture}[scale=.6,every node/.style={minimum size=1cm}]

\def\R{4} 

\def\angEl{10} 
\def\angAz{-100} 
\def\angPhiOne{-110} 
\def\angPhiTwo{-45} 
\def\angBeta{30} 


\pgfmathsetmacro\H{\R*cos(\angEl)} 
\LongitudePlane[xzplane]{\angEl}{\angAz}
\LatitudePlane[equator]{\angEl}{0}
\fill[ball color=white!10] (0,0) circle (\R); 

\DrawLatitudeCircle[\R]{0} 


 \def\eps{1.2}
	\coordinate (D) at (-4,{.2*\eps});
	\def\d{0};

	\coordinate (E) at (-2,{-.6*\eps-.5});
	\def\e{30};
	\coordinate (F) at (-2.4,{.4*\eps-.3});
	\def\f{210};
    \coordinate (G0) at (-1.75,{-.15*\eps-.5});
	  \def\gz{91};
    \coordinate (G1) at (-1.04,{-.17*\eps-.5});
 \def\ge{310};


	\coordinate (G) at (-1,{-.2*\eps-.5});
	\def\g{-50};
    
	\coordinate (H) at (0,{0*\eps-.4});
	\def\h{30};
    \coordinate (H1) at (-.46,{-0.25*\eps-.4});
	\def\ha{35};
    \coordinate (H2) at (.44,{-0.24*\eps-.4});
	\def\hz{122};

    
	\coordinate (I) at (.8,{-.1*\eps-.8});
	\def\i{20};
    
	\coordinate (J) at (2.6,{.3*\eps-.4});
	\def\j{10};
	\coordinate (K) at (3.2,{0*\eps-.4});
	\def\k{-90};
\coordinate (K1) at (1.35,{-0.2*\eps-.4});
	\def\ka{35};

    
    \coordinate (L) at (3,{-.6*\eps-.4});
	\def\l{-90};
	\coordinate (M) at (3.88,{-.8*\eps});
	\def\m{15};
    
 \def\addsm{.9}
	\coordinate (T) at (-2,{-.6*\eps+\addsm});
	\def\t{40};
    
	\coordinate (U) at (-2.4,{.4*\eps+\addsm});
	\def\u{-50};
	\coordinate (U1) at (-1.93,{-.25*\eps+\addsm});
	\def\ua{105};
    \coordinate (U2) at (-1.85,{-.24*\eps+\addsm});
	\def\uz{121};


	\coordinate (V) at (0,{0*\eps-.5});
	\def\v{30};
    
	\coordinate (W) at (0,{0*\eps+\addsm});
	\def\w{30};
	\coordinate (W1) at (-0.16,{-0.18*\eps+\addsm});
	\def\wa{80};
    \coordinate (W2) at (0.6,{-0.18*\eps+\addsm});
	\def\wz{143};
    
	\coordinate (X) at (.8,{-.1*\eps+\addsm-.1});
	\def\x{20};
	\coordinate (Y) at (2.6,{.3*\eps+\addsm});
	\def\y{10};
\coordinate (X1) at (.85,{-.1*\eps+\addsm-.1});
	\def\xa{22};
\coordinate (Y1) at (3.23,{-0.4*\eps+\addsm});
	\def\ya{95};

    
	\coordinate (Z) at (3.2,{0*\eps+\addsm});
	\def\z{-90};

    \coordinate (A) at (-2,2);
	\def\a{45};
	\coordinate (B) at (-1.8,2.2);
	\def\b{30};
	\coordinate (C) at (-1.6,2.4);
	\def\c{10};
	\coordinate (N1) at (-1.4,2.3);
	\def\n{0};
	\coordinate (O1) at (-1,2.2);
	\def\o{-100};
	\coordinate (P) at (-1.5,2.1);
	\def\p{200};
    \coordinate (R) at (-1.9,1.8);
	\def\r{190};
    \coordinate (S1) at (-1.2,2.5);
	\def\s{-20};

\filldraw[nearly transparent, fill=frontshade] 
    (M) arc (-13.7:175:4) to
	(D) to [out=\d,in={\e-180}] 
	(E) to [out=\e,in={\f-180}] 
	(F) to [out=\f-180,in={\g-180}] 
	(G) to [out=\g,in={\h-180}] 
	(H) to [out=\h,in={\i-180}] 
	(I) to [out=\i,in={\j-180}] 
	(J) to [out=\j,in={\k-180}] 
	(K) to [out=\k,in={\l-180}] 
	(L) to [out=\l,in={\m-180}] 
    (M) -- cycle;

    \filldraw[nearly transparent, fill=backshade, draw=backshade] 
    (M) arc (-13.7:175:4) to
	(D) to [out=\d,in={\t-180}] 
	(T) to [out=\t,in={\u-180}] 
	(U) to [out=\u,in={\v-180}] 
	(V) to [out=\v,in={\w-180}] 
	(W) to [out=\w,in={\x-180}] 
	(X) to [out=\x,in={\y-180}] 
	(Y) to [out=\y,in={\z-180}] 
	(Z) to [out=\z,in={\m-180}] 
    (M) -- cycle;

 \foreach \longangthree in {-360,-358,...,-180}{
\DrawLongitudeCirclearcSouthHemispherebacksideONE[\R]{\longangthree}
}
    
 \draw[very thick,blue] 
	(D) to [out=\d,in={\e-180}] 
	(E) to [out=\e,in={\f-180}] 
	(F) to [out=\f-180,in={\g-180}] 
	(G) to [out=\g,in={\h-180}] 
	(H) to [out=\h,in={\i-180}] 
	(I) to [out=\i,in={\j-180}] 
	(J) to [out=\j,in={\k-180}] 
	(K) to [out=\k,in={\l-180}] 
	(L) to [out=\l,in={\m-180}] 
    (M);
  \draw[very thick,dashed,blue] 
	(D) to [out=\d,in={\t-180}] 
	(T) to [out=\t,in={\u-180}] 
	(U) to [out=\u,in={\v-180}] 
	(V) to [out=\v,in={\w-180}] 
	(W) to [out=\w,in={\x-180}] 
	(X) to [out=\x,in={\y-180}] 
	(Y) to [out=\y,in={\z-180}] 
	(Z) to [out=\z,in={\m-180}] 
    (M);

\coordinate[mark coordinate] (O) at (0,0);
\coordinate (N) at (0,{\H+.05});
\coordinate (S) at (0,{-\H-.05});

\node[below right=-.3pt] at (O) {$\mathbf{x}$};
\DrawLatitudeCirclethick[\R]{19} 
\DrawLatitudeCirclethickGREEN[\R]{-19} 

 \foreach \longang in {0,2,...,180}{
\DrawLongitudeCirclearcNorthHemisphereONE[\R]{\longang};
}

 \foreach \longangtwo in {-180,-178,...,0}{
\DrawLongitudeCirclearcSouthHemisphereONE[\R]{\longangtwo};
}

\end{tikzpicture}
    \caption{Consequences of Corollary \ref{c:ourinstance}}
    \label{fig:separationproperty}
\centering
\end{figure}

\section{The Geometric Argument}\label{s:geometric}
In this section we argue, purely geometrically, to show that decay on the spherical $\beta$-numbers implies decay on the classic bilateral $\beta$-numbers. We then put all the ingredients together to finish the proof of our main Theorem \ref{thrm:main theorem}.

\begin{lemma}\label{l:sphericaltofull}
Suppose $\Omega_i$, $i=1,2$, are open subsets of $\mathbb{R}^{n}$ and let $\Gamma = \partial \Omega_1\cup \partial \Omega_2$. Let $x_0 \in \Gamma, 10 \geq R > 0$ and $\varepsilon \in (0,\frac{1}{200})$ be such that for all $x\in \Gamma\cap B(x_0, 2R)$ and $r < R$ there exists a hyperplane $P = P(x, r)$, passing through $x$ with the following properties:
\begin{itemize}
    \item[(i)] 
    \begin{equation}\label{e: spherical beta quant decay}
        \sup_{y\in \Gamma \cap \partial B(x,r)} \mathrm{dist}(y, P\cap \partial B(x,r)) +\sup_{y\in P\cap \partial B(x,r)}\mathrm{dist}(y, \Gamma \cap \partial B(x,r))  \leq r \varepsilon,
    \end{equation}
    in particular, $b\beta_{\Gamma}^s(x,r) \leq \varepsilon$.

    \item[(ii)] If $\hat{n} = \hat{n}(x,r)$ is the unit normal to $P$ then $$\begin{aligned} \{y\in \partial B(x,r)\mid (y-x) \cdot \hat{n} \geq \frac{1}{4}r\} \subset& \Omega_i\\
    \{y\in \partial B(x,r)\mid (y-x) \cdot \hat{n} \leq -\frac{1}{4}r\} \subset& \Omega_j,
    \end{aligned}$$ for some assignment of $i,j\in \{1,2\}$ with $i\neq j$.  
\end{itemize}
   Then there exists a $C > 0$ depending only on the dimension, such that 
    \begin{equation}\label{e:bilat beta}
        b\beta_{\Gamma}(x_0,R)\leq M\varepsilon^{1/4}.
    \end{equation}

    In the special case where \eqref{e: spherical beta quant decay} is sharpened to $b\beta_\Gamma^s(x,r) \leq r^\alpha$ for all $r < R$ then we can conclude $b\beta_\Gamma(x_0,r) \leq Cr^\alpha$ for a dimensional constant $C > 0$ as long as $r \leq R < 1/2$. 
\end{lemma}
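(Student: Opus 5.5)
The plan is to build the approximating plane for $B(x_0,R)$ out of the spherical planes at $x_0$, and to prove coherence of the spherical planes directly from the geometry of intersecting spheres. Concretely, I set $P_0 := P(x_0, R/2)$ (or $P(x_0,r)$ for $r$ slightly below $R$). I then show two things: (a) every $y\in\Gamma\cap B(x_0,R)$ lies within $CR\varepsilon^{1/4}$ of $P_0$, and (b) every $z\in P_0\cap B(x_0,R)$ lies within $CR\varepsilon^{1/4}$ of $\Gamma$. Part (b) is essentially immediate. Any such $z$ lies on a sphere $\partial B(x_0,s)$ with $s=|z-x_0|<R$. Applying (i) at $(x_0,s)$ gives a point of $\Gamma\cap\partial B(x_0,s)$ within $s\varepsilon$ of $P(x_0,s)\cap\partial B(x_0,s)$. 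So (b) reduces to showing that the normals $\hat n(x_0,s)$ and $\hat n(x_0,R/2)$ are close for all $s<R$; this is precisely the coherence step, which feeds into (a) as well. The case $s\lesssim R\varepsilon^{1/4}$ is trivial, because $x_0\in\Gamma$.

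The core of the argument is an angle estimate. The claim is that if $x,x'\in\Gamma$, $r\approx r'$ and $|x-x'|\le r$, then $\angle(\hat n(x,r),\hat n(x',r'))\lesssim\varepsilon^{1/2}$, up to sign. First consider the same center and comparable radii, $x=x'$ with $r'\in[r/2,r]$. Pick $x''\in\Gamma\cap\partial B(x,3r/4)$ lying near $P(x,3r/4)$, which exists by (i). Then $\Sigma:=\partial B(x,r)\cap\partial B(x'',r/2)$ is an $(n-2)$-sphere of radius $\approx r$; here $n\ge3$ is used, so that $\Sigma$ is connected and is not two points. On $\partial B(x'',r/2)$, condition (ii) forces the caps $\{(y-x'')\cdot\hat n(x'',r/2)\ge r/8\}$ to lie in opposite domains $\Omega_i,\Omega_j$. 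On $\partial B(x,r)$ the same holds with respect to $\hat n(x,r)$. Since $\Gamma$ separates $\Omega_i$ from $\Omega_j$, each point of $\Sigma$ that lies deep in one cap of $\partial B(x'',r/2)$ must lie in the matching cap of $\partial B(x,r)$, or within $O(r\varepsilon)$ of $P(x,r)$. Combining this with the bilateral closeness (i) on both spheres, the set $\Gamma\cap\Sigma$ lies within $Cr\varepsilon$ of both $P(x,r)$ and $P(x'',r/2)$. Moreover, $\Gamma\cap\Sigma$ is $Cr\varepsilon$-dense in $P(x,r)\cap\Sigma$, and that set is an $(n-3)$-sphere of radius $\approx r$. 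An $(n-3)$-sphere of radius $\approx r$ is not contained in a thin slab near a codimension-two affine set. Consequently two hyperplanes through points at distance $\lesssim r\varepsilon$ that both contain it up to error $r\varepsilon$ have normals differing by $\lesssim\varepsilon^{1/2}$. The square root absorbs the loss coming from the near-tangency geometry; I will keep $\varepsilon^{1/2}$ as the working bound. Chaining this (and the analogous estimate for shifted centers) gives the general claim.

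I expect the main obstacle to be summing these angles across dyadic scales. A naive chain from scale $R$ down to scale $s$ accumulates an error of order $\log(R/s)\,\varepsilon^{1/2}$, which is not bounded. To get (a) and (b), however, I only need comparability at scales $s\in[R\varepsilon^{1/4},R]$. For smaller $s$ the points involved are within $R\varepsilon^{1/4}$ of $x_0$ anyway, and they are handled by the trivial bound. Even then the number of steps is $\log(1/\varepsilon)$. I would therefore avoid chaining altogether and compare $\partial B(x_0,s)$ directly with $\partial B(x_0,R/2)$ through a single auxiliary sphere $\partial B(x'',\rho)$ with $\rho\approx R$ that meets both; such a $\rho$ exists whenever $s\ge R\varepsilon^{1/4}$. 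This loses a factor of $(R/s)^{1/2}\le\varepsilon^{-1/8}$ in the angle, which is where the final exponent $\varepsilon^{1/4}$ comes from. Part (a) then follows: each $y\in\Gamma\cap B(x_0,R)$ lies on $\partial B(x_0,|y-x_0|)$, where by (i) it sits within $\varepsilon|y-x_0|$ of a plane whose normal is $\varepsilon^{1/4}$-close to $\hat n(x_0,R/2)$.

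For the Hölder statement, I run the same argument with $\varepsilon$ replaced by $r^\alpha$ at each scale. Now the angle between consecutive dyadic scales is $\lesssim (2^{-k}R)^{\alpha/2}$, a geometric series. So $\hat n(x_0,s)$ converges as $s\to0$, with $|\hat n(x_0,s)-\hat n(x_0,r)|\lesssim r^{\alpha/2}$ for $s<r$. Using the limiting plane at scale $r$ then yields $b\beta_\Gamma(x_0,r)\lesssim r^{\alpha'}$ with a (possibly smaller) power $\alpha'$. This suffices to invoke Theorem \ref{t:DKT}.
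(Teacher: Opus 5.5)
\textbf{Verdict: there is a genuine gap.} Your overall architecture agrees with the paper's. You compare the planes at $x_0$ with the plane of one auxiliary sphere of radius $\approx R$, via the $(n-2)$-sphere where the two spheres meet, and use (ii) to locate points of $\Gamma$ on it. But the step that is supposed to produce an angle bound is wrong as stated.

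An $(n-3)$-sphere spans only an $(n-2)$-dimensional affine subspace, which is itself of codimension two. So two hyperplanes that both contain $P(x,r)\cap\Sigma$ up to $r\varepsilon$ can still differ by an arbitrary rotation about that subspace (for $n=3$ the set is just two points). You need two further things:
\begin{itemize}
\item one more point, quantitatively off that subspace, that is close to \emph{both} hyperplanes;
\item transversality of $\Sigma$ to $P(x,r)$, so that $P(x,r)\cap\Sigma$ is a large $(n-3)$-sphere in which $\Gamma\cap\Sigma$ is dense.
\end{itemize}
In the paper both come from making the auxiliary sphere pass through the center. One takes $y\in\Gamma\cap\partial B(x_0,R)$, so $x_0\in\Gamma\cap\partial B(y,R)$ lies within $\varepsilon R$ of $P(y,R)$. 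This makes $\Sigma_s$ transverse (Claim 1) and puts $x_0$ near $P(y,R)$. For $P(x_0,s)$, the $x_0$-direction is recovered from an antipodal pair $p_1,p_1^-\in\Gamma\cap\Sigma_s$, whose midpoint $t_sx_0$ is displaced from $x_0$ by $s^2/(2R)$ (Claim 2).

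With your choice you know neither that $x''$ is near $P(x,r)$ nor that $x$ is near $P(x'',r/2)$. Here $x''$ is only known to be near $P(x,3r/4)$, and the auxiliary radius is $r/2$, so $x\notin\partial B(x'',r/2)$. The position of $x''-x$ relative to $P(x,r)$ is exactly the coherence you are trying to prove. This can be repaired: take $x''$ close to a point of $P(x,3r/4)\cap P(x,r)\cap\partial B(x,3r/4)$, which is nonempty since $n\ge 3$, and use $x''$ itself as the extra point. That gives a bound linear in $\varepsilon$.

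Your workaround for the scale problem does not close the gap either. First, chaining was not actually the obstruction. On $[R\varepsilon^{1/4},R]$ there are $O(\log(1/\varepsilon))$ dyadic steps, and $\varepsilon^{1/2}\log(1/\varepsilon)\lesssim\varepsilon^{1/4}$, so a correct comparable-scale estimate would already suffice. Instead you replace it by a direct comparison of $\partial B(x_0,s)$, $s\ll R$, with a sphere of radius $\approx R$. Your core estimate (comparable radii) does not cover this, and you only assert it, with a loss $(R/s)^{1/2}$.

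The condition you attach, that such a $\rho$ ``exists whenever $s\ge R\varepsilon^{1/4}$,'' is not the real constraint. A sphere of radius $\rho$ through $x_0$ meets $\partial B(x_0,s)$ for every $s\le 2\rho$. The actual constraint is quantitative: the points $p_i\in\Gamma\cap\Sigma_s$ are located only to precision $\varepsilon R$, while the offset encoding the $x_0$-direction is $s^2/(2R)$. The resulting error is $\approx\varepsilon(R/s)^2$, which is why the paper needs $s\ge\varepsilon^{1/4}R$.

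Finally, in the H\"older case your per-scale bound $(2^{-k}R)^{\alpha/2}$ yields at best $b\beta_\Gamma(x_0,r)\lesssim r^{\alpha/2}$, not the stated $Cr^{\alpha}$. The paper obtains the exponent $\alpha$ because for $s\in[R/2,R]$ the Claim 1--2 argument is linear in $\varepsilon$, so consecutive dyadic angles are $\lesssim (2^{-k}R)^{\alpha}$.
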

A set that satisfies property $(ii)$, we say satisfies the \textit{separation property} on spheres.

\begin{proof}
First we note that it is sufficient to prove \eqref{e:bilat beta} for $\varepsilon \leq \varepsilon_0$, where $\varepsilon_0$ is sufficiently small depending on dimension. Since $b\beta_{\Gamma}(x_0,R)\lesssim 1$, if $\varepsilon \geq \varepsilon_0$, then \eqref{e:bilat beta} is trivially true as long as $M$ is large enough depending on $\varepsilon_0$.  

 Let $y\in \Gamma\cap \partial B(x_0,R)$ so that $x_0\in \Gamma \cap \partial B(y,R)$. Then from \eqref{e: spherical beta quant decay}, $x_0\in N_{\varepsilon R}(P(y,R))$ where we use $N_\delta(A)$ to denote the $\delta$-neighborhood of $A$. 
 Now, for $s\leq R$  
$$
\Sigma_s:=\partial B(y,R)\cap \partial B(x_0,s)
$$
is an $(n-2)$-dimensional sphere of radius $r(\Sigma_s) :=\frac{1}{2}s\sqrt{4-(s/R)^2} \approx s$ with center $t_sx_0$ where $t_s=1-\frac{s^2}{2R^2}$. By translating or rotating if necessary, assume $y= 0$ and  $P(0,R)=\{x_n=0\}$. Assume further that $\varepsilon^{1/4}R\leq s$.

We first prove the following claim. \\
\noindent\textit{Claim 1:}
For every $q\in \Sigma_s \cap P(0,R)$ there exists a $p \in \Sigma_s\cap \Gamma$ such that $|p-q| < R\varepsilon$.\\

Indeed, let $A_1$ denote the minimal arc on $\Sigma_s \cap \{x_n\geq 0\}$ containing $q$ and  $t_sx_0+\rho_sv_n$, where $v_n\in \mathbb S^{n-1}$ is chosen so that $t_sx_0+\rho_sv_n\in \Sigma_s$ and $|v_n\cdot e_n|\geq 1-\varepsilon$. Such a $v_n$ exists because $(\Sigma_s-t_sx_0) \subset (\mathrm{span}(x_0))^\perp$ and $|x_0 \cdot e_n| < \varepsilon R$. 

We abuse terminology a bit and call $t_sx_0+\rho_sv_n$ the ``north pole'' of $\Sigma_s$. Similarly, let $A_2$ denote the minimal arc on $\Sigma_s \cap \{x_n\leq 0\}$ containing $q$ and $t_sx_0-\rho_sv_n$, i.e. the ``south pole'' of $\Sigma_s$, where $v_n$ as above.

From property (i) applied to $\partial B(0,R)$,
$$
A_1\cap \{x_n>R\varepsilon\}\cap \Gamma=\emptyset\qquad \text{and}\qquad A_2\cap \{x_n<-R\varepsilon\}\cap \Gamma=\emptyset. 
$$
Moreover, from property (ii) applied to $\partial B(0,R)$,
$$
A_1\cap \{x_n>\frac{R}{4}\}\subset \Omega_1\qquad \text{and}\qquad A_2\cap \{x_n<-\frac{R}{4}\}\subset \Omega_2, 
$$
up to relabeling $\Omega_1$ and $\Omega_2$. Thus, 
$$
A_1\cap \{x_n>R\varepsilon\}\subset \Omega_1\qquad \text{and}\qquad A_2\cap \{x_n<-R\varepsilon\}\subset \Omega_2, 
$$
which means that there exists $p\in \Gamma\cap \{-R\varepsilon\leq x_n\leq R\varepsilon\}\cap (A_1\cup A_2)$. Thus the proof of Claim 1 is complete.

We now show that the angle between $P(x_0,R)$ and $P(x_0,s)$ is small for all $\varepsilon^{1/4}R\leq s\leq R$. By the triangle inequality it is sufficient to show $P(0,R)+x_0$ and $P(x_0,s)$ have small angle for the whole range of $s$. Our strategy is as follows: we construct an auxiliary $(n-1)$-plane, $P_1$, which is determined by a set of $(n-2)$- linearly independent points on $\Sigma_s\cap \Gamma$. Since $(\Sigma_s-t_sx_0)\subset (\mathrm{span}(x_0))^\perp$, this collection of points together with $x_0$ determines an $(n-1)$-plane, which we call $P_1$, that has small angle with both $P(0,R)+x_0$ and $P(x_0,s)$.

\begin{figure}
    \centering
\begin{tikzpicture}[>=Stealth, scale=1.2]

  \def\Rleft{2.2}      
  \def\Rright{1.65}    
  \def\distCenters{2.0}

  \begin{scope}[local bounding box=panel1]
    \coordinate (O) at (-1.3, 0);                   
    \coordinate (O2) at ($(O) + (\distCenters,0)$); 

    \path[name path=sphereL1] (O) circle (\Rleft);
    \path[name path=sphereR1] (O2) circle (\Rright);

    \draw[thick, black!85] (O) circle (\Rleft);
    \draw[thick, black!85] (O2) circle (\Rright);

    \draw[gray!70, thick] (O) ++(-\Rleft, 0) arc (180:360:\Rleft\space and 0.35);
    \draw[gray!50, dashed] (O) ++(\Rleft, 0) arc (0:180:\Rleft\space and 0.35);

    \draw[gray!70, thick] (O2) ++(-\Rright, 0) arc (180:360:\Rright\space and 0.3);
    \draw[gray!50, dashed] (O2) ++(\Rright, 0) arc (0:180:\Rright\space and 0.3);

    \path[name intersections={of=sphereL1 and sphereR1, by={Ptop, Pbot}}];
    \coordinate (M) at ($(Ptop)!0.5!(Pbot)$);

    \path let \p1 = ($(Ptop)-(M)$), \n1 = {veclen(\x1,\y1)} in
      \pgfextra{
        \pgfmathsetmacro{\radSigma}{\n1/1cm}
        \pgfmathsetmacro{\minorSigma}{0.28 / sqrt(1 - (-0.75/\radSigma)^2)}
        \global\let\radSigma\radSigma
        \global\let\minorSigma\minorSigma
      };

    \draw[violet!85!black, thick] (M) ++(0, -\radSigma) arc (-90:90:\minorSigma\space and \radSigma);
    \draw[violet!80!black, dashed, thick] (M) ++(0, \radSigma) arc (90:270:\minorSigma\space and \radSigma);

    \coordinate (X0) at ($(O2) + (0.2, -0.18)$); 
    \coordinate (B1) at ($(M) + ({\minorSigma*cos(15)}, {\radSigma*sin(15)})$);   
    \coordinate (TS) at ($(M) + ({\minorSigma*cos(-18)}, {\radSigma*sin(-18)})$); 


    \fill[black] (O) circle (2pt) node[below left=1pt] {$0$};
    \fill[black] (X0) circle (2.2pt) node[right=1pt] {$x_0$};


    \fill[blue!80!black] (TS) circle (2.3pt);
    \draw[blue!80!black, thick] ($(M) + (-1.1, -1.5)$) node[below left] {$t_s x_0 + \rho_s b_1$}
      to[out=45, in=-140] (TS);

    \node[violet!85!black, font=\large] at ($(M) + (0, -2)$) {$\Sigma_s$};

  \end{scope}

  \begin{scope}[shift={(6.5,0)}, local bounding box=panel2]
    \coordinate (O) at (-1.3, 0);                   
    \coordinate (O2) at ($(O) + (\distCenters,0)$); 

    \path[name path=sphereL2] (O) circle (\Rleft);
    \path[name path=sphereR2] (O2) circle (\Rright);

    \draw[thick, black!85] (O) circle (\Rleft);
    \draw[thick, black!85] (O2) circle (\Rright);

    \draw[gray!70, thick] (O) ++(-\Rleft, 0) arc (180:360:\Rleft\space and 0.35);
    \draw[gray!50, dashed] (O) ++(\Rleft, 0) arc (0:180:\Rleft\space and 0.35);

    \draw[gray!70, thick] (O2) ++(-\Rright, 0) arc (180:360:\Rright\space and 0.3);
    \draw[gray!50, dashed] (O2) ++(\Rright, 0) arc (0:180:\Rright\space and 0.3);

    \path[name intersections={of=sphereL2 and sphereR2, by={Ptop2, Pbot2}}];
    \coordinate (M) at ($(Ptop2)!0.5!(Pbot2)$);

    \draw[violet!85!black, thick] (M) ++(0, -\radSigma) arc (-90:90:\minorSigma\space and \radSigma);
    \draw[violet!80!black, dashed, thick] (M) ++(0, \radSigma) arc (90:270:\minorSigma\space and \radSigma);

    \coordinate (X0) at ($(O2) + (0.2, -0.18)$); 
    \coordinate (B1) at ($(M) + ({\minorSigma*cos(15)}, {\radSigma*sin(15)})$);   
    \coordinate (TS) at ($(M) + ({\minorSigma*cos(-18)}, {\radSigma*sin(-18)})$); 
    \coordinate (P1) at ($(M) + (0.28, -0.75)$);                                 

    \fill[black] (O) circle (2pt) node[below left=1pt] {$0$};
    \fill[black] (X0) circle (2.2pt) node[right=1pt] {$x_0$};

    \fill[blue!80!black] (TS) circle (2.3pt);
    \draw[blue!80!black, thick] ($(M) + (-1.1, -1.5)$) node[below left] {$t_s x_0 + \rho_s b_1$}
      to[out=45, in=-140] (TS);

    \fill[orange!95!black] (P1) circle (2.3pt);
    \draw[orange!95!black, thick] ($(M) + (0.45, -1.8)$) node[below right=-2pt] {$p_1$}
      to[out=105, in=-35] (P1);

    \node[violet!85!black, font=\large] at ($(M) + (0, -2)$) {$\Sigma_s$};

  \end{scope}

\end{tikzpicture}
    \caption{$\Sigma_s$ and construction of points in the case $n=3$}
    \label{fig:n=3 example}
\end{figure}

Let us begin by constructing the set of such points. Observe that $(P(0,R)\cap \Sigma_s)-t_sx_0\subset (\mathrm{span}(e_n,x_0))^{\perp}$. Let $b_1, \ldots, b_{n-2}$ be an orthonormal basis of $\mathrm{span}(x_0, e_n)^{\perp}$. 
Then the points $\{t_sx_0+\rho_sb_i\}_{i=1}^{n-2}\subset \Sigma_s$. From property (i) applied to $\partial B(0,R)$, 
$$|e_n \cdot (t_sx_0 + \rho_sb_i)| = |t_s||e_n \cdot x_0| < \varepsilon R,$$
or in other words 
\begin{equation*}
    d(t_sx_0+\rho_sb_i, P(0,R))\leq \varepsilon R \qquad \text{for}\qquad 1\leq i\leq n-2.
\end{equation*}

Then by \textit{Claim 1} and the triangle inequality, there exist points, $\{p_i\}_{i=1}^{n-2}\subset \Sigma_s\cap \Gamma$ such that 
\begin{equation}\label{e:lin indep}
    |p_i - (t_sx_0 + \rho_sb_i)| \leq 2\varepsilon R.
\end{equation}

In particular, this means that $d(p_i,P(0,R))\leq 3\varepsilon R$ for $1\leq i\leq n-2$. Furthermore, the points $p_i - t_sx_0$ are quantitatively linearly independent in the sense that 
$$
\begin{aligned} 
\left|(p_i -t_sx_0)\cdot (p_j - t_sx_0)\right| 
&\leq \left|(p_i - (t_sx_0 + \rho_s b_i))\cdot (p_j  -(t_s x_0 + \rho_s b_j))\right| + \left|\rho_sb_i \cdot \rho_s b_j\right|\\
&\hspace{.3cm}+\left|(p_i - (t_sx_0 + \rho_sb_i))\cdot \rho_sb_j\right| +\left|(p_j - (t_sx_0 + \rho_sb_j))\cdot \rho_sb_i \right|\\
&\leq 4\varepsilon^2 R^2 + 4\varepsilon R\rho_s\\
&\approx8\varepsilon R\rho_s\\
&\leq 8 \varepsilon^{3/4}  \rho_s^2,
\end{aligned}
$$
where we have used \eqref{e:lin indep}, and the facts that $\rho_s\approx s$ and $\varepsilon R\leq \varepsilon^{1/4}R\leq s\leq R$. Thus, 
\begin{equation}\label{e: p_i p_j}
    \left|\frac{(p_i -t_sx_0)}{\rho_s}\cdot \frac{(p_j -t_sx_0)}{\rho_s}\right|\lesssim \varepsilon^{3/4}.
\end{equation}
Moreover, from \eqref{e:lin indep}, the definition of the $b_i's$ and the bounds on $s$ we have
\begin{align*}
    \left|x_0\cdot (p_i-t_sx_0)\right|&\leq |x_0||p_i-(t_sx_0+\rho_s b_i)|+\rho_s|x_0\cdot b_i|\leq |x_0|2\varepsilon R\leq 2|x_0|\varepsilon^{3/4}\rho_s.
\end{align*}
Thus,
\begin{equation}\label{e: x_0 p_i}
    \left|\frac{x_0}{|x_0|}\cdot \frac{(p_i-t_sx_0)}{\rho_s}\right|\lesssim \varepsilon^{3/4},
\end{equation}
 and from \eqref{e: p_i p_j} and \eqref{e: x_0 p_i} we can conclude that the set of point $\{x_0/|x_0|, p_1-t_sx_0, \ldots, p_{n-2}-t_sx_0\}$ are quantitatively linearly independent.

Let $P_1:= \mathrm{span}(x_0, p_1 - t_sx_0, \hdots, p_{n-2}-t_sx_0)=\mathrm{span}(x_0,p_1,\hdots, p_{n-2}) $ and let $P(x_0, s)$ be a plane which achieves the infimum for $b\beta_\Gamma^s(x_0,s)$. In particular $P(x_0,s)$ is a $(n-1)$-affine plane which contains $x_0$ and satisfies 
\begin{equation}\label{e: p_i's close}
    d(p_i,P(x_0,s))\leq s\varepsilon \qquad \text{for all}\qquad 1\leq i\leq n-2, 
\end{equation}
and if $p_1^- \in \Sigma_s\cap \Gamma$ is a point which satisfies $|p_1^- - (t_sx_0 -\rho_sb_1)| \leq 2\varepsilon R$, then also 
\begin{equation}\label{e: p- close}
    d(p_1^-, P(x_0, s)) \leq s\varepsilon.
\end{equation}

We now aim to prove that the auxiliary plane $P_1$ has small angle with both $P(x_0,s)-x_0$ and $P(0,R)$. Since $x_0,p_i\in \partial B(0,R)\cap \Gamma\subset N_{\varepsilon R}(P(0,R))$ for all $1\leq i\leq n-2$ from \eqref{e: spherical beta quant decay}, it follows immediately that 
\begin{equation}\label{e:small angle planes}
    \measuredangle(P(0,R), P_1)\leq C\varepsilon, 
\end{equation}
for some constant $C:=C(n)>0$, where $\measuredangle(P(0,R),P_1)$ denotes the angle between $P(0,R)$ and $P_1$. We now prove that $P_1$ has small angle with $P(x_0,s)-x_0$, which is the content of \textit{Claim 2.}\\

\noindent\textit{Claim 2:} $\measuredangle \left(P(x_0,s)-x_0,P_1\right)\leq C \varepsilon^{1/4}$ for some constant $C$ depending only on dimension. \\

To prove \textit{Claim 2}, it is sufficient to show that there exists a unit vector $\hat{e}\in P(x_0, s)- x_0$ such that $\hat{e}\cdot  \frac{x_0}{|x_0|} \geq  1-C\varepsilon^{1/2}$ for some constant $C:=C(n) > 0$. Indeed, if such a vector exists, then 
\begin{equation*}
    \mathrm{dist}(\frac{x_0}{|x_0|}, P(x_0,s)- x_0) \leq |\frac{x_0}{|x_0|}-\hat{e}| < C\varepsilon^{1/4},
\end{equation*}
and 
\begin{align*}
     \mathrm{dist}\left(\frac{p_i - t_sx_0}{\rho_s}, P(x_0, s)-x_0\right) 
     &\leq \frac{1}{\rho_s}\mathrm{dist}\left(p_i-x_0, P(x_0, s)-x_0\right)+\frac{1}{\rho_s}\mathrm{dist}\left((1-t_s)x_0, P(x_0,s)-x_0\right)\\
     &\leq \frac{1}{\rho_s}\varepsilon s+\frac{R(1-t_s)}{\rho_s}\left|\frac{x_0}{|x_0|}-\hat{e}\right|\\
     &\leq C\varepsilon +C\varepsilon^{1/4}\\
     &\approx \varepsilon^{1/4},
\end{align*}
from \eqref{e: p_i's close}, and since $|x_0|=R$, $\rho_s\approx s$, and $1-t_s\approx s^2/R^2$. Then from the fact that $\{\frac{x_0}{|x_0|}, \frac{1}{\rho_s}(p_1 - t_sx_0), \hdots, \frac{1}{\rho_s}(p_{n-2}-t_sx_0)\}$ are quantitatively linearly independent and all $C\varepsilon^{1/4}$-close to $P(x_0, s)-x_0$, we can conclude \begin{equation}\label{e:firstanglesmall}
\measuredangle(P(x_0,s)-x_0,P_1)\leq C\varepsilon^{1/4},
\end{equation} 
where $C > 0$ depends only on the dimension.

So we now prove the existence of such a vector, $\hat{e}$, in order to complete the proof of \textit{Claim 2}. It follows from \eqref{e: p_i's close} and \eqref{e: p- close} that there exists a vector $\tilde{e} \in P(x_0,s) - x_0$ such that  
\begin{align*}
    \left|\tilde{e}-\frac{1}{s}((p_1-x_0) + (p_1^--x_0)) \right|\leq 2\varepsilon.
\end{align*}
It then follows, also from \eqref{e:lin indep}, that 
\begin{align*}
    \left|\tilde{e} + \frac{2(1-t_s)}{s}x_0\right|
    &\leq \left|\tilde{e}-\frac{1}{s}(p_1-x_0+p_1^--x_0)\right| + \frac{1}{s}\left|p_1-t_sx_0+p_1^--t_sx_0 \right|
    \leq C\varepsilon^{3/4},
\end{align*}
for all $R\varepsilon^{1/4}\leq s\leq R$. Moreover, observing that $|\frac{2(1-t_s)}{s}x_0| = \frac{s}{R}$, let $e^{\prime}:=\frac{-R}{s}\tilde{e}$, in which case $e^{\prime}\in P(x_0, s)- x_0$ and, 
$$
|e' - \frac{x_0}{|x_0|}| \leq C\frac{R}{s}\varepsilon^{3/4}\leq C \varepsilon^{1/2}.
$$ 
Taking $\hat{e} = e'/|e'|$ completes the proof of \textit{Claim 2}.

It then follows from \eqref{e:small angle planes} and \textit{Claim 2}, we have
 $$ 
 \measuredangle(P(0,R), P(x_0,s)-x_0) \leq  \measuredangle(P(0,R), P_1) + \measuredangle(P_1, P(x_0,s)-x_0) \leq C\varepsilon^{1/4},
 $$ 
 for all $R\varepsilon^{1/4}\leq s\leq  R$, and thus, 
 \begin{equation}\label{e: final angle}
       \measuredangle(P(x_0,R), P(x_0,s))\leq \measuredangle(P(x_0,R), P(0,R)+x_0)+\measuredangle(P(0,R)+x_0,P(x_0,s))\leq C\varepsilon^{1/4}, 
 \end{equation}
 for all $R\varepsilon^{1/4}\leq s\leq  R$.

\begin{figure}
    \centering
    \begin{tikzpicture}[>=Stealth, scale=1.2]

  \def\Rleft{2.2}      
  \def\Rright{1.65}    
  \def\distCenters{2.0}

  \coordinate (O) at (-1.3, 0);                   
  \coordinate (O2) at ($(O) + (\distCenters,0)$); 

  \path[name path=sphereL] (O) circle (\Rleft);
  \path[name path=sphereR] (O2) circle (\Rright);

  \coordinate (P1_BL) at (-3.8, -0.25);
  \coordinate (P1_TL) at (-2.9, 0.70);
  \coordinate (P1_TR) at (3.1, 0.45);
  \coordinate (P1_BR) at (3.0, -0.85);

  \coordinate (Px_BL) at (-3.5, -1.05);
  \coordinate (Px_TL) at (-3.4, -0.05);
  \coordinate (Px_TR) at (3.7, 0.8);
  \coordinate (Px_BR) at (3.6, -0.20);

  \draw[gray!70, thick] (Px_BL) -- (Px_TL) -- (Px_TR) -- (Px_BR) -- cycle;
  \draw[orange!90!black, thick] (P1_BL) -- (P1_TL) -- (P1_TR) -- (P1_BR) -- cycle;

  \draw[thick, black!85] (O) circle (\Rleft);
  \draw[thick, black!85] (O2) circle (\Rright);

  \draw[gray!70, thick] (O) ++(-\Rleft, 0) arc (180:360:\Rleft\space and 0.35);
  \draw[gray!50, dashed] (O) ++(\Rleft, 0) arc (0:180:\Rleft\space and 0.35);

  \draw[gray!70, thick] (O2) ++(-\Rright, 0) arc (180:360:\Rright\space and 0.30);
  \draw[gray!50, dashed] (O2) ++(\Rright, 0) arc (0:180:\Rright\space and 0.30);

  \path[name intersections={of=sphereL and sphereR, by={Ptop, Pbot}}];
  \coordinate (M) at ($(Ptop)!0.5!(Pbot)$);

  \path let \p1 = ($(Ptop)-(M)$), \n1 = {veclen(\x1,\y1)} in
    \pgfextra{
      \pgfmathsetmacro{\radSigma}{\n1/1cm}
      \pgfmathsetmacro{\minorSigma}{0.28 / sqrt(1 - (-0.75/\radSigma)^2)}
      \global\let\radSigma\radSigma
      \global\let\minorSigma\minorSigma
    };

  \draw[violet!85!black, thick] (M) ++(0, -\radSigma) arc (-90:90:\minorSigma\space and \radSigma);
  \draw[violet!80!black, dashed, thick] (M) ++(0, \radSigma) arc (90:270:\minorSigma\space and \radSigma);

  \coordinate (X0) at ($(O2) + (0.2, -0.18)$);                                
  \coordinate (P1_dot) at ($(M) + (0.28, -0.75)$);                             
  \coordinate (P_green) at ($(M) + ({\minorSigma*cos(15)}, {\radSigma*sin(15)})$); 

  \fill[black] (O) circle (2pt) node[below left=1pt] {$0$};
  \fill[black] (X0) circle (2pt) node[right=2pt] {$x_0$};
  \fill[orange!95!black] (P1_dot) circle (2.2pt);

  \coordinate (En_base) at ($(O) + (0.4, -0.05)$);


  \node[orange!90!black, font=\Large] at ($(P1_TR) + (0.2, -0.2)$) {$P_1$};
  \node[black!85, font=\Large] at ($(Px_TR) + (0.7, 0.1)$) {$P(x_0,s)$};
  \node[violet!85!black, font=\Large] at ($(M) + (0, -\radSigma - 0.5)$) {$\Sigma_s$};

  \draw[orange!90!black, thick] ($(M) + (0.5, -1.8)$) node[below right=-2pt] {$p_1$}
    to[out=105, in=-35] (P1_dot);

\end{tikzpicture}
    \caption{Auxiliary plane $P_1$ in case $n=3$}
    \label{fig:close planes}
\end{figure}

We now complete the proof of the lemma. Consider the plane $P(x_0,R)$. Then for any $x\in B(x_0,\varepsilon^{1/4}R)$, it follows immediately that 
$$
\mathrm{dist}(x, P(x_0,R)\cap B(x_0,R))\leq \varepsilon^{1/4}R\qquad \text{and} \qquad \mathrm{dist}(x, \Gamma\cap B(x_0,R))\leq \varepsilon^{1/4}R.
$$
So let $x\in B(x_0,R)\setminus B(x_0,R\varepsilon^{1/4})$. Then $x\in \partial B(x_0,s)$ for some $\varepsilon^{1/4}R\leq s\leq R$. 

If $x\in \Gamma$ then from \eqref{e: spherical beta quant decay} and \eqref{e: final angle}, we have
\begin{equation}\label{e:pt to plane}
   d(x,P(x_0,R)\cap B(x_0,R))\leq d(x,P(x_0,s))+s\measuredangle(P(x_0,s),P(x_0,R))\leq C\varepsilon^{1/4} s\leq C\varepsilon^{1/4} R. 
\end{equation}

If instead $x \in P(x_0,R))$, then from \eqref{e: spherical beta quant decay} and \eqref{e: final angle},
\begin{equation}\label{e: pt to boundary}
    d(x,\Gamma\cap B(x_0,R))\leq s\measuredangle(P(x_0,s),P(x_0,R))+\varepsilon s\leq C\varepsilon^{1/4} s\leq C\varepsilon^{1/4} R.
\end{equation}

Thus, \eqref{e:bilat beta} follows.

For the special case when $\varepsilon=r^{\alpha}$ in \eqref{e: spherical beta quant decay} (i.e. when $b\beta_\Gamma^s(x,r) \leq r^\alpha$), the argument in \textit{Claim 2} yields
$$
\measuredangle(P(x_0,s)-x_0, P_1)\leq C R^{\alpha} \qquad \text{for all} \qquad \frac{R}{2}\leq s\leq R,
$$
which allows us to conclude that
$$
\measuredangle(P(x_0,R), P(x_0,s)) \leq CR^\alpha, \qquad \text{for all} \qquad  \frac{R}{2}\leq s\leq R.
$$ 
Again, consider the plane $P(x_0,R)$. Observe that for any $0\leq s\leq R$, there exists $K_0\in \mathbb{N}$ such that $\frac{R}{2^{K_0 +1}}<s\leq \frac{R}{2^{K_0}}$, and then
\begin{align*}
    \measuredangle(P(x_0,R), P(x_0,s))
    &\leq \measuredangle\left(P\left(x_0,\frac{R}{2^{K_0}}\right),P(x_0,s)\right)+\sum_{k=0}^{K_0} \measuredangle\left(P\left(x_0,\frac{R}{2^k}\right),P\left(x_0,\frac{R}{2^{k+1}}\right)\right)\\
    &\leq C \left(\frac{R}{2^{K_0}}\right)^{\alpha} +CR^{\alpha}\sum_{k=0}^{K_0}2^{-k\alpha}\\
    &\leq C R^{\alpha},
\end{align*}
where $C$ depends only on dimension (and not on $K_0$). Then as in \eqref{e:pt to plane} and \eqref{e: pt to boundary}, we obtain 
$$
\sup_{x\in \Gamma \cap B(x_0,R)}\mathrm{dist}(x, P(x_0,R)\cap B(x_0,R))\leq CR^{\alpha+1},
$$ 
and
$$
\sup_{x\in P(x_0,R)\cap B(x_0,R)}\mathrm{dist}(x, \Gamma\cap B(x_0,R))\leq CR^{\alpha+1},
$$
which gives $b\beta_\Gamma(x_0,R)\leq CR^{\alpha}$, which is the desired bound. 
\end{proof}

We our now ready to put all the ingredients together and prove our main theorem:

\begin{proof}[Proof of Theorem \ref{thrm:main theorem}]
    It follows from Lemma \ref{lem:existence of corkscrews} that $\Omega_2=\mathbb{R}^n\setminus\overline{\Omega_1}$ and $\Omega_1$ is a two-sided corkscrew open set. Let us rename $\Omega^+:=\Omega_1$ and $\Omega^-:=\Omega_2$. The result now follows from Corollary \ref{c:ourinstance}, Lemma \ref{l:sphericaltofull}, and Theorem \ref{t:DKT}.
\end{proof}

\bibliographystyle{alpha}
\bibdata{references}
\bibliography{references}
 \end{document}